\documentclass[10pt]{article}

\usepackage{amssymb,amsthm,amsmath,amsfonts}
\usepackage{graphicx}
\usepackage{verbatim}
\usepackage{algorithm,algorithmic}
\usepackage[usenames]{color}
\graphicspath{{./pics/}}
\usepackage[colorlinks,linktocpage,linkcolor=blue]{hyperref}

\newtheorem{lem}{Lemma}[section]

\newtheorem{prop}{Proposition}[section]

\newtheorem{exam}{Example}

\setlength\topmargin{-1cm} \setlength\textheight{220mm}
\setlength\oddsidemargin{0mm}
\setlength\evensidemargin\oddsidemargin \setlength\textwidth{160mm}
\setlength\baselineskip{18pt}

\title{A Direct Sampling Method for \\ Inverse
 Electromagnetic Medium Scattering}
\author{Kazufumi Ito\footnote{Department of Mathematics and Center for
Research in Scientific Computation, North Carolina State University,
Raleigh, North Carolina (kito@unity.ncsu.edu).} \and Bangti
Jin\footnote{Department of Mathematics and Institute for Applied Mathematics
and Computational Science, Texas A\&M University, College Station, Texas
77843-3368, USA (btjin@math.tamu.edu).} \and Jun Zou\footnote{Department
of Mathematics, Chinese University of Hong Kong, Shatin, N.T., Hong Kong
(zou@math.cuhk.edu.hk).}}
\date{\today}
\begin{document}
\maketitle

\begin{abstract}
In this paper, we study the inverse electromagnetic
medium scattering problem of estimating the support and shape of
medium scatterers from scattered electric or magnetic near-field data.
We shall develop a novel direct sampling method based on an analysis of
electromagnetic scattering and the behavior of the fundamental solution.
The method is applicable even with one incident field
and needs only to compute inner products of the measured scattered
field with the fundamental solutions located at sampling points.
Hence it is strictly direct, computationally very efficient, and highly
tolerant to the presence of noise in the data. Two- and three-dimensional
numerical experiments indicate that it can provide reliable support
estimates of one single and multiple scatterers in case of both exact and highly noisy data.

\smallskip
\noindent \textbf{Key Words}: inverse medium scattering, direct sampling method, scattering analysis,
electromagnetic wave propagation
\end{abstract}

\section{Introduction}

Inverse electromagnetic scattering represents an important noninvasive imaging technology for interrogating
material properties, and it arises in many practical applications such as
biomedical diagnosis \cite{Semenov:1999,Bulyshev:2004}, nondestructive testing \cite{WangChew:1989},
and geophysical exploration \cite{CuiChewAydinerChen:2001}. In this work we are
concerned with the inverse medium problem of determining the electrical/magnetic properties
of unknown inhomogeneous objects embedded in a homogeneous background from noisy measurements of the
scattered electric/magnetic field corresponding to one or several
incident fields impinged on the objects.
Mathematically, the medium scattering problem
is described by the time-harmonic Maxwell system:
\begin{equation*}
  \begin{aligned}
  \mathrm{i}\omega\epsilon\, E+\nabla\times H& =0\quad\mbox{in } ~\mathbb{R}^\mathrm{d},\\
  -\mathrm{i}\omega\mu\, H+\nabla \times E&=0\quad\mbox{in } ~\mathbb{R}^\mathrm{d},
\end{aligned}
\end{equation*}
where the vectorial fields $H$ and $E$ denote the magnetic and electric fields, respectively.
Here the constant $\omega$ is the angular frequency, the functions $\epsilon$ and $\mu$ refer to the
electrical permittivity and magnetic permeability, respectively.

Let the domain $\Omega\subset\mathbb{R}^\mathrm{d}\, (\mathrm{d}=2,3)$ be the space
occupied by the inhomogeneous medium objects within the homogeneous background $\mathbb{R}^\mathrm{d}$.
We are interested in either electrical or magnetic inhomogeneities, but shall focus our discussions on the
case of electrical inhomogeneities since the case of magnetic inhomogeneities follows analogously.
So we shall assume $\mu=\mu_0$, i.e., the magnetic permeability of the background.
Then by taking the $\mathrm{curl}$ of the second equation of the system and
eliminating $H$ in the first equation, we obtain
the following vector Helmholtz equation for the electric field $E$
\begin{equation}\label{eqn:maxwell}
   \nabla\times(\nabla \times E)-k^2n^2(x)\,E=0
\end{equation}
where $k$ is the wavenumber, with $k^2=\omega^2\epsilon_0\mu_0$,
and $n=\frac{1/\sqrt{\epsilon_0\mu_0}}{1/\sqrt{\epsilon\mu_0}}$ is the refractive index function,
i.e., the ratio of the wave velocity in the homogeneous background medium to that in the
inhomogeneities. The refractive index $n$ completely characterizes the electrical inhomogeneities, and the support
is filled by the inhomogeneous media, i.e.,
$\mathrm{supp}(n^2-1)=\Omega$. Further, we assume that the medium scattering is excited by an incident plane
wave $E^{i}$:
\begin{equation*}
  E^{i} = pe^{\mathrm{i}kd\cdot x},
\end{equation*}
where $d\in\mathbb{S}^{\mathrm{d}-1}$ and $p\in\mathbb{S}^{\mathrm{d}-1}$ are
the incident and polarization directions, respectively. Since the incident field $E^i$ is solenoidal, i.e.,
$\nabla\cdot E^i=0$, the polarization $p$ should be chosen such that
it is perpendicular to the incident direction $d$. Then the incident field
$E^i$ satisfies the Maxwell system \eqref{eqn:maxwell}
in the entire homogeneous background. The forward scattering
problem is to find the total electric field $E$ given the refractive index $n^2$,
under the following Silver-M\"{u}ller radiation condition
for  the scattered field $E^s=E-E^i$:
\begin{equation*}
  \lim_{|x|\rightarrow\infty}|x|^\frac{\mathrm{d}-1}{2}\left(\nabla\times E^s\times\hat{x}-\mathrm{i}kE^s\right)=0
\end{equation*}
uniformly for all directions $\hat{x}=x/|x|\in\mathbb{S}^{\mathrm{d}-1}$.

The inverse problem of our interest is the inverse medium scattering problem, which is
to reconstruct the inhomogeneous media from the scattered electric
field $E^s$ corresponding to one (or several) incident field $E^i$, measured over a certain
closed curve/surface $\Gamma$.
In view of the practical significance of the inverse problem, there has been considerable
interest in designing efficient and stable inversion techniques. However, this is very challenging
because of a number of complicating factors: strong nonlinearity of the map from the
refractive index to the scattered field, severe ill-posedness of the inverse problem,
complexity of the forward model, and the limited available and noisy data. Nonetheless,
a number of imaging algorithms have been developed in the literature,
which can be roughly divided into two categories: direct and indirect methods. The former
aims at detecting the scatterer support and shape, and includes linear sampling method
(LSM) \cite{ColtonKirsch:1996,CakoniColtonMonk:2011},
multiple signal classification (MUSIC) \cite{Devaney:2006}, and asymptotic analysis
\cite{AmmariIakovlevaLesselierPerrusson:2007}. In contrast,
the latter provides a distributed estimate of the refractive index by applying
regularization techniques. We refer interested readers to
the adjoint-based method \cite{DornBerteteBerrymanPapanicolaou:1999,Vogeler:2003,Lakhal:2010},
the recursive linearization (with continuation in frequency) \cite{BaoLi:2005siam,BaoLi:2009jcp},
the Gauss-Newton method \cite{Hohage:2006,ZaeytijdFranchois:2007,HohageLanger:2011},
the contrasted source inversion \cite{AbubakarHabashy:2005}, subspace regularization \cite{ChenZhong:2009} and level set method \cite{DornLesselier:2006} for an
incomplete list. Generally, the estimates by the method of the latter category
can provide more details of the inclusions/inhomogeneities,
but at the expense of much increased computational efforts, especially when the
forward model is the full Maxwell system.

In this work, we shall develop a novel direct method, called the direct sampling method,
for stably and accurately detecting the support of the scatterers. These sampling-type methods
are direct in the sense that no optimizations or solutions of linear systems are involved, so they are
computationally very cheap. They have attracted some attentions in very recent years;
see \cite{Potthast:2010} and \cite{ItoJinZou:2012} for inverse acoustic scattering problems
using far-field data and near-field data, respectively.
A different derivation of the method due to Potthast \cite{Potthast:2010}
was given in \cite{LiZou:2012}, where the performance of the methods
using near-field and far-field data was compared in detail and the effectiveness
of these methods was also investigated for several important scattering scenarios:
obstacles, inhomogeneous media, cracks and their combinations.

The major goal of this work is to extend
the direct sampling method developed in \cite{ItoJinZou:2012}
for the acoustic scattering to the electromagnetic scattering.
Due to the much increased complexity of the Maxwell's system relative to
its scalar counterpart, the Helmholtz equation, the extension is nontrivial and
requires several innovations.
It is based on an integral representation of the scattered field,
a careful analysis on electromagnetic scattering
and the behavior of the fundamental solutions. Numerically, it involves only computing inner products of the
measured scattered field $E^s$ with fundamental solutions to the Maxwell system
located at sampling points over the measurement surface $\Gamma$. Hence it
is strictly direct and does not use any matrix operations or minimizations, and its implementation is
very straightforward. Our extensive numerical experiments indicate that it can provide an
accurate and reliable estimate of the scatterer support, even in the presence of a fairly large amount
of noises in the data. Hence, it represents an effective yet simple computational tool for
reliably detecting the scatterer support. In practice, a rough estimate of the scatterer support
may be sufficient for many purposes \cite{TortelMicolauSaillard:1999}. And
if desired, one can obtain a more refined estimate of the medium scatterers by, e.g.,
any aforementioned indirect imaging methods or the one from \cite{ItoJinZou:2012jcp},
using the estimate from the direct sampling method to provide an initial computational domain,
which is usually much smaller than the originally selected sampling domain.
Since indirect imaging methods often involve highly nonlinear optimization processes,
an accurate and smaller initial domain can essentially reduce the entire computational
efforts.

The direct sampling method developed below
uses a sampling strategy, and its flavor closely resembles MUSIC and the LSM
(see \cite{Potthast:2006,CakoniColtonMonk:2011} for overviews).
However, it differs significantly from these
two techniques. Firstly, it works with a few incident fields, whereas the latter
two require the full map (multi-static response matrix/far-field operator) or data
from sufficiently many incidents. Secondly,
our method does not perform any matrix operations, e.g.
eigendecomposition in MUSIC or solving ill-posed integral equations
in the LSM. Hence, our method is computationally efficient. Lastly, the noise is
treated directly via the inner product, which automatically filters out the noise
contribution, and thus the method is highly tolerant to noises.

The rest of the paper is organized as follows. In Section 2, we recall
an integral reformulation of the Maxwell system, cf. \eqref{eqn:maxwell},
recently derived in \cite{LakhalLouis:2008}, which plays an essential role in the
derivation of the direct sampling method. Then we develop the
method in Section 3 in detail, where a preliminary analysis of its theoretical performance
is also provided. In Section 4, we provide two-
and three-dimensional numerical experiments to
illustrate its distinct features, i.e., accuracy and robustness, for both exact and noisy data.
Technical details of the implementation are provided in the appendix.

\section{Integral representation of Maxwell System}
In this part, we recall an equivalent formulation of the Maxwell system \eqref{eqn:maxwell},
which is fundamental to the derivation of the direct sampling
method. We begin with the definition of the fundamental solution
$G(x,y)$ to the scalar Helmholtz equation, i.e.,
\begin{equation*}
  (-\Delta -k^2)G(x,y) = \delta(x-y)
\end{equation*}
where $\delta(x-y)$ is the Dirac delta function with the singularity located at
$y\in\mathbb{R}^\mathrm{d}$. We know that $G(x,y)$ has the following representations
(see, e.g., \cite{ColtonKress:1998})
\begin{equation*}
  G(x,y) = \left\{\begin{aligned}
    \frac{\mathrm{i}}{4}H_0^{(1)}(k|x-y|),&\quad\mathrm{d}=2;\\
    \frac{1}{4\pi}\frac{e^{\mathrm{i}k|x-y|}}{|x-y|}, &\quad \mathrm{d}=3,
  \end{aligned}\right.
\end{equation*}
where the function $H_0^{(1)}$ refers to Hankel's function of the first kind and zeroth order.
Using the scalar function $G(x, y)$ we can define a matrix-valued function $\Phi(x,y)$ by
\begin{equation}\label{eq:phi}
\Phi(x,y)=k^2G(x,y)\,I+D^2G(x,y)
\end{equation}
where $I\in\mathbb{R}^{\mathrm{d}\times\mathrm{d}}$ is the identity
matrix and $D^2$ denotes the Hessian of $G$. Then we can verify by some direct calculations
that $\nabla \cdot \Phi(x,y)=0$ and
\begin{equation}
  \nabla\times\nabla\times \Phi(x,y) - k^2\Phi(x,y) = \delta(x-y)I, \label{eq:Green}
\end{equation}
where (and in the sequel) the actions of the operators $\nabla \cdot$ and $\nabla\times$ on a matrix-valued
function are always understood to be operated columnwise. Hence,
the matrix $\Phi(x,y)$ defined by \eqref{eq:phi} is a divergence-free fundamental solution to
the Maxwell system \eqref{eqn:maxwell} in the homogeneous space $\mathbb{R}^\mathrm{d}$.
Using the fundamental solution $\Phi(x,y)$,
the total electric field $E(x)$ can be represented
by the following integral equation
\begin{equation}\label{eqn:int}
  E(x) = E^{i} + \int_{\mathbb{R}^\mathrm{d}}\Phi(x,y)(n^2-1)E(y)dy.
\end{equation}
We note that the fundamental solution $\Phi(x,y)$ involves a non-integrable singularity at
 $x=y$. Hence care must be exerted when interpreting the integral, in the case that the
point $x$ lies within the domain $\Omega$ \cite{HabashyGroomSpies:1993}.
Next we let $\eta=n^2-1$, which precisely profiles the inhomogeneities of the media. In particular,
the support of $\eta$ coincides with the scatterer support $\Omega$. Furthermore,
we introduce the function $J=(n^2-1)E$, that is the induced electrical current caused
by the medium inhomogeneities. Then, the total electric field $E(x)$ satisfies
\cite[Theorem 9.1]{ColtonKress:1998}
\begin{equation*}
  E(x)=E^{i}(x)+k^2\int_{\mathbb{R}^\mathrm{d}}G(x,y)J(y)\,dy+\nabla_x \int_{\mathbb{R}^\mathrm{d}}
  G(x,y)\mbox{div}_y J(y)\,dy.
\end{equation*}
Upon noting the reciprocity relation $\nabla_x G(x,y)=-\nabla_yG(x,y)$ and applying integration
by parts to the second integral term on the right hand side, we arrive at the following equivalent integral equation
\begin{equation*}
E(x) - \int_{\mathbb{R}^\mathrm{d}} G(x,y)PJ(y)\,dy=E^{i}(x)
\end{equation*}
where the operator $P$ is defined by
\begin{equation*}
   P\phi=k^2 I\phi+\mbox{grad}(\mbox{div}\phi).
\end{equation*}
Thus, by multiplying the equation with the coefficient $\eta$, we obtain an integral equation for the
generalized electric current $J$
\begin{equation} \label{Jeq}
J(x)-\eta\int_{\Omega}G(x,y)PJ(y)\,dy=\eta E^i(x),\quad x\in\Omega.
\end{equation}
The above equation was rigorously justified in suitable function spaces in
\cite{LakhalLouis:2008}, and it is very convenient for
solving inverse problems; see \cite{LakhalLouis:2008} for
the inverse source problem and \cite{Lakhal:2010} for inverse medium scattering.
Compared with the whole-space Maxwell system, the integral
equation \eqref{Jeq} is defined over the scatterer support $\Omega$ since the induced current $J$ vanishes
identically outside the scatterer support $\Omega=\mathrm{supp}(\eta)$.
This reduces greatly the computational domain, and hence brings significant computational
convenience. We shall adopt the integral equation \eqref{Jeq} for the forward scattering simulation, which
can be discretized numerically by the mid-point quadrature rule (cf. Appendix \ref{app:int} for details).

\section{Direct sampling method}

In this section, we develop a novel direct sampling method
to determine the locations,
the number and the shape of the scatterers/inhomogeneities in
electromagnetic wave propagation. It is based on an analysis
for electromagnetic scattering. Methodologically, it extends our earlier work
on the acoustic scattering \cite{ItoJinZou:2012}. We shall also provide an analysis of the
theoretical performance of the method by examining the behaviors of the
fundamental solution.
For the sake of convenience, we introduce the domain $\Omega_\Gamma$, which is the domain
enclosed by the circular/spherical measurement surface $\Gamma$, and use
$(\cdot,\cdot)$ for the real inner product on $\mathbb{C}^\mathrm{d}$ and the overbar
for the complex conjugate.

\subsection{Direct sampling method}
In this part, we develop a novel direct sampling method (DSM). The derivation relies essentially
on the following two basic facts. The first is the representation of the scattered electric field
$E^s$ using the fundamental solution $\Phi$ by  (cf.\,\eqref{eqn:int}):
\begin{equation}
E^s(x)=\int_{\Omega} \Phi(x,y)J(y)dy\quad \forall\,x\in\Gamma.
\label{eq:sf} 
\end{equation}

The second fact is an important relation for the fundamental solution
$\Phi(x,y)$ \eqref{eq:phi} to the Maxwell system \eqref{eqn:maxwell}.
For any two arbitrary sampling points $x_p$ and  $x_q$ which lie inside
domain $\Omega_\Gamma$ but are away from its boundary $\Gamma$, we have
the following approximation
\begin{equation}\label{eqn:xpq}
\int_{\Gamma}(\Phi(x,x_p)p,\overline{\Phi}(x,x_q)q)ds \approx  k^{-1}
\Im (p, \Phi(x_p,x_q)q)
\quad \forall\,p, \,q\in\mathbb{R}^\mathrm{d}\,.
\end{equation}

Next, we derive this crucial correlation. To do so, we first show the following lemma.
\begin{lem}
Let $x_p$ and $x_q$ be two distinct points  lying inside the domain $\Omega_\Gamma$. Then
for any constant vectors $p\in \mathbb{C}^\mathrm{d}$ and $q\in\mathbb{R}^\mathrm{d}$, there holds that
\begin{equation}\label{eqn:intid}
  \begin{aligned}
   \int_\Gamma(\nabla\times \overline{\Phi}(x,x_q)q\times n,\Phi(x,x_p)p)-(\nabla \times\Phi(x,x_p)p\times n,\overline{\Phi}(x,x_q)q)ds
      = - 2\mathrm{i}(p,\Im(\Phi(x_p,x_q))q).
  \end{aligned}
\end{equation}
\end{lem}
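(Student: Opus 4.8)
The plan is to read the left-hand side of \eqref{eqn:intid} as the boundary term of the vector Green's second identity for the operator $\nabla\times\nabla\times$, applied to the fields $u:=\Phi(x,x_p)p$ and $v:=\overline{\Phi}(x,x_q)q$. Starting from $\nabla\cdot(a\times b)=b\cdot(\nabla\times a)-a\cdot(\nabla\times b)$ (used with $a=\nabla\times u,\ b=v$ and with $a=\nabla\times v,\ b=u$, subtracted and integrated over a domain $D$ with exterior unit normal $\nu$), one obtains
\begin{equation*}
\int_{D}\big(v\cdot\nabla\times\nabla\times u-u\cdot\nabla\times\nabla\times v\big)\,dx=\int_{\partial D}\big((\nabla\times v\times\nu)\cdot u-(\nabla\times u\times\nu)\cdot v\big)\,ds ,
\end{equation*}
and, because $(\cdot,\cdot)$ is the real bilinear pairing, the $\Gamma$-integrand here is exactly the integrand of \eqref{eqn:intid}. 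Heuristically, if one could take $D=\Omega_\Gamma$ and use \eqref{eq:Green} for $u$ together with its complex conjugate for $v$ — legitimate since $k$ and $q$ are real, so the source $\delta(\cdot-x_q)q$ is unchanged under conjugation — the left side would collapse to $v(x_p)\cdot p-u(x_q)\cdot q$; substituting $v(x_p)=\overline{\Phi}(x_p,x_q)q$, $u(x_q)=\Phi(x_q,x_p)p$, and the symmetries $\Phi(x,y)=\Phi(y,x)=\Phi(x,y)^{\!\top}$ (immediate from $G(x,y)=g(|x-y|)$) would give $(p,\overline{\Phi}(x_p,x_q)q)-(p,\Phi(x_p,x_q)q)=-2\mathrm{i}(p,\Im(\Phi(x_p,x_q))q)$, which is \eqref{eqn:intid}.

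To turn this into a rigorous argument I would excise the two poles. Since $x_p\ne x_q$ lie in the interior of $\Omega_\Gamma$, fix $\varepsilon>0$ small enough that $\overline{B_\varepsilon(x_p)}$ and $\overline{B_\varepsilon(x_q)}$ are disjoint and contained in $\Omega_\Gamma$, and apply the identity above on $D_\varepsilon:=\Omega_\Gamma\setminus(\overline{B_\varepsilon(x_p)}\cup\overline{B_\varepsilon(x_q)})$. There $u$ and $v$ are smooth and solve $\nabla\times\nabla\times w=k^2w$ classically, so the volume integrand vanishes identically. Bookkeeping the orientation of $\partial D_\varepsilon$ (on the small spheres the exterior normal of $D_\varepsilon$ points toward the centre) and using the linearity of the boundary form in $\nu$, this yields, for all admissible $\varepsilon$,
\begin{equation*}
\int_{\Gamma}\big((\nabla\times v\times n)\cdot u-(\nabla\times u\times n)\cdot v\big)\,ds=\Big(\int_{\partial B_\varepsilon(x_p)}+\int_{\partial B_\varepsilon(x_q)}\Big)\big((\nabla\times v\times\hat r)\cdot u-(\nabla\times u\times\hat r)\cdot v\big)\,ds ,
\end{equation*}
with $\hat r$ the exterior unit normal of the respective ball; it remains to let $\varepsilon\to0$ in the two sphere integrals.

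The core of the proof is the evaluation of these limits. I would split $\Phi$ — hence $u$ and $v$ — into its leading singularity $k^2 G_{\mathrm{s}}I+D^2 G_{\mathrm{s}}$ (acting on $p$, resp.\ $q$), where $G_{\mathrm{s}}$ is the $k$-independent, real, leading part of $G$ ($-\tfrac{1}{2\pi}\log|x-y|$ for $\mathrm{d}=2$, $\tfrac{1}{4\pi}|x-y|^{-1}$ for $\mathrm{d}=3$), plus a strictly less singular remainder that does not contribute in the limit. A useful observation is that $D^2 G_{\mathrm{s}}=\nabla\nabla^{\!\top}G_{\mathrm{s}}$ is columnwise curl-free, so $\nabla\times u$ is only as singular as $\nabla\times(k^2G_{\mathrm{s}}\,p)\sim|x-x_p|^{-(\mathrm{d}-1)}$; hence in the $x_p$-sphere integral the term $(\nabla\times u\times\hat r)\cdot v$ converges directly, exactly as in the scalar Helmholtz case, to (a multiple of) $v(x_p)\cdot p$, while the term $(\nabla\times v\times\hat r)\cdot u$, which involves the stronger $|x-x_p|^{-\mathrm{d}}$ singularity of $u$ from $D^2G_{\mathrm{s}}$, is borderline: its naive leading order is $O(\varepsilon^{-1})$ but vanishes by the oddness relation $\int_{\mathbb{S}^{\mathrm{d}-1}}\hat r\,ds=0$, and a finite contribution survives from the first-order Taylor coupling of the smooth factor $\nabla\times v$ with that singularity. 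Carrying out the angular integrals gives $\int_{\partial B_\varepsilon(x_p)}(\cdots)\,ds\to (p,v(x_p))=(p,\overline{\Phi}(x_p,x_q)q)$. The $x_q$-sphere is treated identically with $(u,x_p,p)$ and $(v,x_q,q)$ interchanged, and — since $u$ and $v$ enter the boundary form with opposite signs — its limit is $-(q,u(x_q))=-(p,\Phi(x_p,x_q)q)$, again by the symmetries of $\Phi$ and of the pairing. Adding the two contributions reproduces $-2\mathrm{i}(p,\Im(\Phi(x_p,x_q))q)$, which is \eqref{eqn:intid}.

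I expect the main obstacle to be precisely that last, borderline term: unlike the scalar case, the Maxwell fundamental solution carries the strong $D^2G$ singularity, so $\int_{\partial B_\varepsilon}(\nabla\times v\times\hat r)\cdot u\,ds$ is not absolutely convergent term by term, and extracting its correct finite value $(p,v(x_p))$ requires the cancellation of the $O(\varepsilon^{-1})$ term and an $\varepsilon$-uniform estimate of the remaining order. All the other ingredients — the vector Green identity, the conjugation argument for $v$, the appeal to \eqref{eq:Green}, the symmetry of $\Phi$, and the orientation bookkeeping — are routine.
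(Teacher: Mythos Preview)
Your proposal is correct and follows essentially the same route as the paper: both arguments apply the vector Green second identity to the pair $u=\Phi(\cdot,x_p)p$, $v=\overline{\Phi}(\cdot,x_q)q$, feed in the fundamental-solution equation \eqref{eq:Green} (and its conjugate, legitimate since $k,q$ are real), and finish with the symmetry $\Phi(x,y)=\Phi(y,x)^{\!\top}$. The only difference is one of presentation: the paper works distributionally, integrating directly over $\Omega_\Gamma$ with the $\delta$-sources and then integrating by parts, whereas you excise $\varepsilon$-balls around $x_p,x_q$ and take the limit; this makes the same computation rigorous at the price of the borderline sphere-integral analysis you correctly flag, which the paper's formal argument bypasses entirely.
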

\begin{proof}
It is easy to verify directly that
the identify $(\nabla\times\,\nabla \times \Phi(x,x_p)) p =
\nabla\times\,\nabla \times \Phi(x,x_p) p$ holds
for any constant vector $p\in\mathbb{R}^\mathrm{d}$.
Using this identify and (\ref{eq:Green}) we have
\begin{eqnarray}
  \nabla\times\,\nabla \times \Phi(x,x_p) p - k^2\Phi(x,x_p) p&=& \delta(x-{x_p}) p \quad
   \forall\, p\in\mathbb{R}^\mathrm{d}\,, \label{eqn:fdsp}\\
  \nabla \times\nabla \times \Phi(x,x_q)q - k^2\Phi(x,x_q)q&=& \delta(x-{x_q})q \quad
   \forall\, q\in\mathbb{R}^\mathrm{d}\,. \label{eqn:fdsq1}
\end{eqnarray}
Taking the conjugate of equation \eqref{eqn:fdsq1} yields
\begin{equation}\label{eqn:fdsq}
    \nabla\times\nabla\times \overline{\Phi}(x,x_q)q - k^2\overline{\Phi}(x,x_q)q= \delta(x-{x_q})q.
\end{equation}
By taking (real) inner products of equation \eqref{eqn:fdsp} with $\overline{\Phi}(x,x_q)q$  and of
equation \eqref{eqn:fdsq} with $\Phi(x,x_p)p$, then subtracting the two identities, we arrive at
\begin{equation}\label{eq:xpq}
  \begin{aligned}
    \int_{\Omega_\Gamma}\big\{(\overline{\Phi}(x,x_q)q,\nabla\times\nabla\times \Phi(x,x_p)p)-
       (\Phi(x,x_p)p,&\nabla\times\nabla\times \overline{\Phi}(x,x_q)q) \big\}dx\\
    &= (p,\overline{\Phi}(x_p,x_q)q)-(q,\Phi(x_p,x_q)p).
   \end{aligned}
\end{equation}
Next we apply the following integration by parts formula
\begin{equation*}
  \begin{aligned}
   \int_{\Omega_\Gamma}\big\{(\overline{\Phi}(x,x_q)q,&\nabla\times\nabla\times \Phi(x,x_p)p) - (\Phi(x,x_p)p,\nabla\times\nabla\times \overline{\Phi}(x,x_q)q)\big\}dx\\
   =& \int_\Gamma(\nabla\times \overline{\Phi}(x,x_q)q\times n,\Phi(x,x_p)p)-(\nabla\times {\Phi}(x,x_p)p\times n, \overline{\Phi}(x,x_q)q)ds
  \end{aligned}
\end{equation*}
to the left hand side of (\ref{eq:xpq}) to obtain
\begin{equation*}
   \begin{aligned}
   \int_\Gamma(\nabla\times \overline{\Phi}(x,x_q)q\times n,\Phi(x,x_p)p) - (\nabla\times &\Phi(x,x_p)p\times n,\overline{\Phi}(x,x_q)q)ds\\
    &= (p,\overline{\Phi}(x_p,x_q)q) - (q,\Phi(x_p,x_q)p).
   \end{aligned}
\end{equation*}
Now the real symmetry of the fundamental solution $\Phi(x,y)$ leads directly to the identify
\eqref{eqn:intid}.
\end{proof}

%

Next, note that on a circular curve/spherical surface $\Gamma$, we can approximate the left
hand side of identity \eqref{eqn:intid} by means of the
Silver-M\"{u}ller radiation condition for the outgoing fundamental solution $\Phi(x,y)$ to the Maxwell system, i.e.,
\begin{equation*}
\nabla\times \Phi(x,x_p)p \times n =\mathrm{i}k\Phi(x,x_p)p + \mathrm{h.o.t.}
\end{equation*}
Thus we have the following approximations:
\begin{equation*}
   \begin{aligned}
     \nabla\times \Phi(x,x_p)p\times n &\approx \mathrm{i}k\Phi(x,x_p)p,\\
     \nabla\times \overline{\Phi}(x,x_q)q\times n&\approx -\mathrm{i}k\overline{\Phi}(x,x_q)q,
   \end{aligned}
\end{equation*}
which are valid if the points $x_p$ and $x_q$ are not close to the boundary $\Gamma$.
Consequently, we arrive at the following important approximate relation:
\begin{equation*}
-\int_{\Gamma}\Big\{(\mathrm{i}k\,\Phi(x,x_p)p,\overline{\Phi}(x,x_q)q)+(\mathrm{i}k\,\overline{\Phi}(x,x_q)q,\Phi(x,x_p)p)\Big\}ds
\approx - 2\mathrm{i}(p,\Im(\Phi(x_p,x_q))q).
\end{equation*}
Upon simplifying the relation, we arrived at the desired relation \eqref{eqn:xpq}.

The relation \eqref{eqn:xpq} leads us to a very important observation:
the following integral over the measurement surface $\Gamma$
\begin{equation} \label{cross}
   \langle\Phi(\cdot,x_p)p,\Phi(\cdot,x_q)q\rangle_{L^2(\Gamma)}=\int_{\Gamma}(
    \Phi(x,x_p)p,\overline{\Phi}(x,x_q)q)ds
\end{equation}
has a maximum if $x_p=x_q$ and decays to $0$ as $|x_p-x_q|$
tends to $\infty$, in view of the decay property of the fundamental solution $\Phi(x,y)$.

These two basic facts \eqref{eq:sf} and \eqref{eqn:xpq} are crucial to the derivation of 
our direct sampling method. 
Next we consider a sampling domain $\widetilde \Omega\subset\Omega_\Gamma$
that contains the scatterer support $\Omega$,  and divide it
into a set of small elements $\{\tau_j\}$. Then by using the rectangular quadrature rule,
we arrive at the following discrete sum representation
\begin{equation}\label{eqn:sum}
E^s(x)=\int_{\widetilde{\Omega}}\Phi(x,y)J(y)dy\approx \sum_{j}\Phi(x,y_j)J(y_j)\,|\tau_j|,
\end{equation}
where the point $y_j$ lies within the $j$th element $\tau_j$,
and $|\tau_j|$ denotes
the volume/area of the element $\tau_j$. Noting the fact that the
generalized induced current $J$ vanishes identically outside the support
$\Omega$, the summation in \eqref{eqn:sum} is actually only over
those elements that intersect with $\Omega$. We
point out that, in practice, the scatterer support $\Omega$ may consist of several disjoint
subregions, each being occupied by a (possibly different) physical medium. By elliptic
regularity theory \cite{ColtonKress:1998,WeberWerner:1981}, the induced
current $J=\eta\,E$ is smooth in each subregion, and thus according to classical
approximation theory, the approximation in \eqref{eqn:sum} can be made arbitrarily
accurate by refining the elements $\{\tau_j\}$. Nonetheless, we reiterate that
the relation \eqref{eqn:sum} serves only the goal of motivating our method, and
it is not needed in the implementation of our method. Physically,  the relation
\eqref{eqn:sum} can be  interpreted as follows: the scattered electric field $E^s$ at any fixed point
$x\in\Gamma$ is a weighted average of that due to the point scatterers located at
$\{y_j\}$ lying within the true scatterer $\Omega$.

The approximate relations \eqref{eqn:xpq} and \eqref{eqn:sum} together give that
for any sampling point $x_p\in \widetilde{\Omega}$ and any constant vector $q\in\mathbb{R}^\mathrm{d}$, there holds
\begin{equation}\label{eq:crucial}
  \begin{aligned}
    \langle E^s, \Phi(\cdot,x_p)q\rangle_{L^2(\Gamma)} &\approx \left\langle \sum_j\Phi(\cdot,y_j)J(y_j)
    |\tau_j|,\Phi(\cdot,x_p)q\right\rangle_{L^2(\Gamma)}\\
      & =\sum_j|\tau_j| \langle\Phi(\cdot,y_j)J(y_j),\Phi(\cdot,x_p)q\rangle_{L^2(\Gamma)}\\
      & \approx k^{-1}\sum_j |\tau_j|(J(y_j), \Im(\Phi(x_p,y_j))q).
  \end{aligned}
\end{equation}
Due to the behavior of the fundamental solution (see the discussions
in Section 3.2), the approximation (\ref{eq:crucial}) indicates
that the inner product $\langle E^s, \Phi(\cdot,x_p)q\rangle$ may take significant
values if the sampling point $x_p$ is close to some physical point
scatterer $y_j$ and small values otherwise, thereby
acting as an indicator function to the presence of scatterers and
equivalently providing an estimate of the scatterer support $\Omega$.
In summary, these observations lead us to the following index function:
\begin{equation}\label{eqn:ind}
  \Psi(x_p;q) = \frac{|\langle E^s,\Phi(\cdot,x_p)q\rangle_{L^2(\Gamma)}|}
  {\|E^s\|_{L^2(\Gamma)}\|\Phi(\cdot,x_p)q\|_{L^2(\Gamma)}}\,, \quad
  \forall\, x_p\in\widetilde{\Omega}.
\end{equation}
Here $\Phi(\cdot,x_p)q$ acts as a probe/detector
for the scatterers. In principle, the choice of the polarization vector $q$ in the index
$\Psi(x_p;q)$ can be quite arbitrary. Naturally, it is expected that the choice
of $q$ will affect the capability of the function $\Phi(x,x_p)q$ for probing scatterers.
The analysis in Section 3.2 will shed insights into the probing
mechanism and will provide useful guidelines for the choice of $q$.
Our experimental experiences indicate that the choice $q=p$, i.e., the
polarization of the incident field $E^{i}$, works very well in practice.
We note that, apart from providing an estimate of the scatterer support,
the index $\Psi(x_p;q)$ provides
a likelihood distribution of the inhomogeneities in $\widetilde{\Omega}$, which up to
a multiplicative constant may be used as the an initial estimate of the distributed
coefficient $\eta$ \cite{ItoJinZou:2012jcp}.

The DSM is one sampling type imaging technique. There are several existing sampling
techniques, e.g., MUSIC and the LSM. The DSM differs from these techniques
in several aspects. First, the DSM involves computing only inner products of
the measured scattered field  with the probing functions $\Phi(x,x_p)q$, i.e., the fundamental
solutions with singularity concentrated at sampling points. Therefore, it does not
involve any expensive linear algebraic operations, like
solving first-kind ill-posed integral equations in the LSM and eigenvalue decomposition
for computing the signal space (noise space) in the MUSIC.
Secondly, the DSM is applicable to a few (e.g., one or two) incident fields, whereas
the LSM requires the far field map and the MUSIC requires the multi-statistic matrix, which
necessitates multiple measurements. Therefore, the DSM is particularly attractive in
the case of a few scattered data.
Thirdly, the DSM treats noise directly via the inner product. The noise roughly distributes
the energy in all frequencies equally, whereas the fundamental solutions are smooth with their energy
concentrated on low frequency modes. Therefore, the high-frequency modes in the noise
are roughly orthogonal to the (smooth) fundamental solutions.
In other words, the presence of noise does not cause
much changes in the inner product, but the normalization. As a consequence, the
DSM  is highly tolerant to data noise.

\subsection{Analysis of the index function $\Psi$}
In this part, we analyze the theoretical performance and the mechanism of the DSM by
analytically and numerically studying the fundamental solution $\Phi(x,y)$ in (\ref{eq:phi})
and the index $\Psi$ in (\ref{eqn:ind}) for one single point scatterer.
First, we recall the crucial role of the approximate
relation \eqref{eqn:xpq}: the fundamental solution $\Phi(x_p,x_q)$ is
nearly singular, and assumes very large values for $x_p$ close to $x_q$.
More precisely, the extremal property of $\Im(\Phi(x_p,x_q))$ paves the way for accurate
support detection. This observation is evident for the scalar Helmholtz equation
in view of the identity $\Im(G(x_p,x_q))=\frac{1}{4}J_0(k|x_p-x_q|)$ (in
two-dimension), where $J_0$ is the Bessel function of the first kind
of order zero. However, for the Maxwell system, the fundamental solution $\Phi(x,y)$
contains 4 or 9 different entries, and each term exhibits drastically different behavior.
So with each different polarization vector $q$, the probing function $\Phi(x,x_p)q$, as well as
the index function $\Psi(x_p;q)$ in \eqref{eqn:ind}, mixes different components together and may yield
quite different behaviors. Next we shall investigate more closely the properties of $\Phi(x,y)$ in order to have a better understanding
and interpretation of the index function $\Psi(x_p;q)$ in \eqref{eqn:ind}.

We begin with an important observation on the trace of the fundamental solution $\Phi(x,y)$.
\begin{prop}
For $\mathrm{d}=2,3$, the fundamental solution $\Phi(x,y)$ satisfies the following relation
\begin{equation}\label{eq:phi2}
\mathrm{tr}\Phi(x,y) = (\mathrm{d}-1)k^2G(x,y).
\end{equation}
\end{prop}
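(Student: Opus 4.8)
The plan is to compute the trace of $\Phi(x,y)$ directly from its definition \eqref{eq:phi}. Since $\Phi(x,y) = k^2 G(x,y)\,I + D^2 G(x,y)$, linearity of the trace gives
\begin{equation*}
  \mathrm{tr}\,\Phi(x,y) = k^2 G(x,y)\,\mathrm{tr}(I) + \mathrm{tr}(D^2 G(x,y)) = \mathrm{d}\,k^2 G(x,y) + \Delta_x G(x,y),
\end{equation*}
because the trace of the Hessian is precisely the Laplacian. The next step is to eliminate the Laplacian term using the defining equation of the scalar fundamental solution, namely $(-\Delta - k^2)G(x,y) = \delta(x-y)$. For $x \neq y$ this reads $\Delta_x G(x,y) = -k^2 G(x,y)$, so substituting yields
\begin{equation*}
  \mathrm{tr}\,\Phi(x,y) = \mathrm{d}\,k^2 G(x,y) - k^2 G(x,y) = (\mathrm{d}-1)k^2 G(x,y),
\end{equation*}
which is exactly \eqref{eq:phi2}. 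The argument is the same for $\mathrm{d}=2$ and $\mathrm{d}=3$, the only difference being the value of $\mathrm{tr}(I)$.

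One point that deserves a remark rather than genuine difficulty is the role of the Dirac mass at $x=y$: strictly speaking $\Delta_x G(x,y) = -k^2 G(x,y) + \delta(x-y)$ in the distributional sense, so the identity \eqref{eq:phi2} holds pointwise for $x\neq y$, and as an identity of distributions one should read both sides as carrying the same $\delta(x-y)$ contribution (consistent with the fact that $\Phi$ itself has a non-integrable singularity at $x=y$, as already noted after \eqref{eqn:int}). Since the statement is used only away from the diagonal — the sampling points $x_p$ lie strictly inside $\Omega_\Gamma$ and the measurement points $x$ lie on $\Gamma$ — the pointwise version for $x\neq y$ is all that is needed, and I would state the proof at that level.

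I do not anticipate a real obstacle here: the proof is a two-line computation invoking only the definition \eqref{eq:phi} and the scalar Helmholtz equation for $G$. The only mild care required is the bookkeeping of the identity $\mathrm{tr}(D^2 G) = \Delta G$ and the handling of the singularity, both of which are routine.
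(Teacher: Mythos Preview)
Your argument is correct and considerably shorter than the paper's. You exploit the identity $\mathrm{tr}(D^2G)=\Delta_x G$ together with the scalar Helmholtz equation $-\Delta G - k^2 G = \delta$, which immediately gives $\mathrm{tr}\,\Phi = \mathrm{d}\,k^2 G - k^2 G = (\mathrm{d}-1)k^2 G$ for $x\neq y$, and your remark on the distributional singularity is appropriate. The paper instead proceeds by brute force: it computes every entry $\Phi_{ij}(x,y)$ explicitly, separately in two and three dimensions (via the recursion relations for $H_n^{(1)}$ in 2D and by direct differentiation of $e^{\mathrm{i}k|x-y|}/|x-y|$ in 3D), and then sums the diagonal entries and simplifies. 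Your route is dimension-independent and conceptually cleaner; the paper's route, while longer, has the side benefit of producing the explicit componentwise formulas for $\Phi_{ij}$ that are subsequently used in the analysis of the cross products $\langle\Phi_{ij}(\cdot,x_p),\Phi_{ij}(\cdot,x_q)\rangle_{L^2(\Gamma)}$ in Section~3.2.
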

\begin{proof}
Let us begin with the two-dimensional case, i.e., transverse electric mode,
i.e., $E=(E_1,E_2,0)$ and $H=(0,0,H_3)$. Then
the Maxwell system \eqref{eqn:maxwell} should be understood as
\begin{equation*}
  \begin{aligned}
     \mathrm{i}\omega\mu_0\, H_3&=(\nabla \times E)_3= \tfrac{\partial E_2}{\partial x_1}-\tfrac{\partial E_1}{\partial x_2}\\
     -\mathrm{i}\omega \epsilon \, E&=\nabla \times H= \left(\tfrac{\partial H_3}{\partial x_2},-\tfrac{\partial H_3}{\partial x_1}\right).
  \end{aligned}
\end{equation*}
The fundamental solution $G(x,y)$ is given by $G(x,y)=\frac{\mathrm{i}}{4}H^{(1)}_0(k\,|x-y|)$.
To evaluate the Hessian $D^2G(x,y)$, we first recall the recursive relation for the derivative
of Hankel functions \cite{AbramowitzStegun:1968}
\begin{equation*}
  \tfrac{d}{dz}H_n^{(1)}(z) = \tfrac{nH_n^{(1)}(z)}{z}-H_{n+1}^{(1)}(z).
\end{equation*}
Hence, by chain rule and product rule, we deduce that
\begin{equation*}
  \tfrac{\partial^2}{\partial x_i\partial x_j} H_0^{(1)}(k|x-y|) %
   = k^2H_2^{(1)}(k|x-y|)\tfrac{(x-y)_i(x-y)_j}{|x-y|^2}-k\tfrac{1}{|x-y|}H_1^{(1)}(k|x-y|)\delta_{i,j},
\end{equation*}
where $\delta_{ij}$ is the Kronecker delta function.
%
Consequently, the fundamental solution $\Phi(x,y)$ is given by
\begin{equation*}
\Phi(x,y)_{i,j}=\tfrac{\mathrm{i}k^2}{4}\left[\left(H_0^{(1)}(k|x-y|)-\tfrac{H_1^{(1)}(k|x-y|)}{k|x-y|}\right)\delta_{i,j}+
   H^{(1)}_2(k|x-y|)\tfrac{(x-y)_i(x-y)_j}{|x-y|^2}\right].
\end{equation*}
Upon noting the recursive relation $\frac{2H_1^{(1)}(z)}{z}=H_0^{(1)}(z) + H_2^{(1)}(z)$
for Hankel functions \cite{AbramowitzStegun:1968}, we deduce that the trace of the fundamental solution satisfies
\begin{equation*}
  \begin{aligned}
    \mathrm{tr}(\Phi(x,y))&=\tfrac{\mathrm{i}k^2}{4}\left[2H_0^{(1)}(k|x-y|)-2\tfrac{H_1^{(1)}(k|x-y|)}{k|x-y|}+H^{(1)}_2(k|x-y|)\right]\\
       & = \tfrac{\mathrm{i}k^2}{4}H_0^{(1)}(k|x-y|)=k^2G(x,y).
  \end{aligned}
\end{equation*}

Next we turn to the three-dimensional case, i.e.,
$G(x,y)=\frac{1}{4\pi}\frac{e^{\mathrm{i}k|x-y|}}{|x-y|}$.
A direct calculation yields
\begin{equation*}
  \begin{aligned}
    \Phi(x,y)_{ij} =& G(k|x-y|)\left[k^2\left(\delta_{ij}-\tfrac{(x-y)_i(x-y)_j}{|x-y|^2}\right)
     -\left(\delta_{i,j}-\tfrac{3(x-y)_i(x-y)_j}{|x-y|^2}\right)\tfrac{1}{|x-y|^2}\right.\\
     &\left.+\mathrm{i}k\left(\delta_{i,j}-\tfrac{3(x-y)_i(x-y)_j}{|x-y|^2}\right)\tfrac{1}{|x-y|}\right].
  \end{aligned}
\end{equation*}
Consequently, $\mbox{tr}(\Phi(x,y))=2k^2G(x,y)$.
\end{proof}

It follows from \eqref{eq:phi2} that
the sum of the diagonal components of the fundamental solution
$\Phi(x,y)$ to the Maxwell system behaves
like the scalar fundamental solution $G(x,y)$, which, according to the experimental
evidences in \cite{ItoJinZou:2012}, can provide an excellent probing function
of the scatterer support. To be more precise, this yields
\begin{equation} \label{Id}
  k^{-1}\Im(\mbox{tr}\,\Phi(x,y))=
  \left\{  \begin{aligned}
  \frac{k^2}{4}\frac{J_0(k|x-y|)}{k},&\quad \mathrm{d}=2,\\
  2k^2\frac{\sin(k|x-y|)}{k|x-y|}, &\quad \mathrm{d}=3.
  \end{aligned}\right.
\end{equation}
Therefore, the crucial quantity $k^{-1}\Im(\mbox{tr}\Phi(x_p,x_q))$ attains its maximum at
$x_p=x_q$,  indicating the presence of the scatterers. We note that apart from
the global maximum, there are also a number of local maxima, which introduce ``ripples'' into the
resulting index. However, we note that the positions of the local
maxima depend only on the known wavenumber $k$, and can be strictly calibrated.

Next, we examine combinations of the components of the fundamental solution $\Phi(x,y)$.
We first note that each independent component $\Im\Phi_{ij}(x,x_q)$ does have
a significant maximum at $x=x_q$ like \eqref{Id}. However, the magnitude
of each component differs dramatically. In Fig.\,\ref{fig:fds1}, we show the
inner products of the components $\Phi_{11}$, $\Phi_{22}$, $\Phi_{12}$ with
themselves (termed cross product for short) and the diagonal
sum as a function of the sampling point $x_p\in\widetilde{\Omega}$, i.e., $|\langle
\Phi_{11}(x_p,x),\Phi_{11}(x_q,x)_{L^2(\Gamma)}\rangle|$ etc.
We note that these cross products form the building blocks of the index function $\Psi(x_p;q)$,
and their behaviors completely determine the performance of the DSM.
The cross products for $\Phi_{11}$ and $\Phi_{22}$ exhibit significant
directional resonance effects but their sum, i.e., $\langle\Phi_{11}(x,x_q),
\Phi_{11}(x,x_p)\rangle_{L^2(\Gamma)}+\langle\Phi_{22}(x,x_q),\Phi_{22}(x,x_p)
\rangle_{L^2(\Gamma)}$
does only assume one significant maximum at $x_q=x_p$ without much resonance
effects. In practice, due to the mixing of these different
components in the index $\Psi(x_p;q)$, the performance of the DSM
may not be as good as that for the scalar Helmholtz case
in the case of the scattered electric field $E^s$
of one polarization $p$. Nonetheless, with multiple polarizations $p_k$
and all components of the respective scattered field $E^s$, one might be able to
approximate the trace $\mathrm{tr}\Phi(x,y)$, and \eqref{Id} can be applied.
If this is indeed the case, then the performance of the sampling method would equal that for
the scalar case. 

\begin{figure}[h] \centering
  \begin{tabular}{cc}
     \includegraphics[trim = 2cm 1cm 2cm 1cm, clip=true,width=6cm]{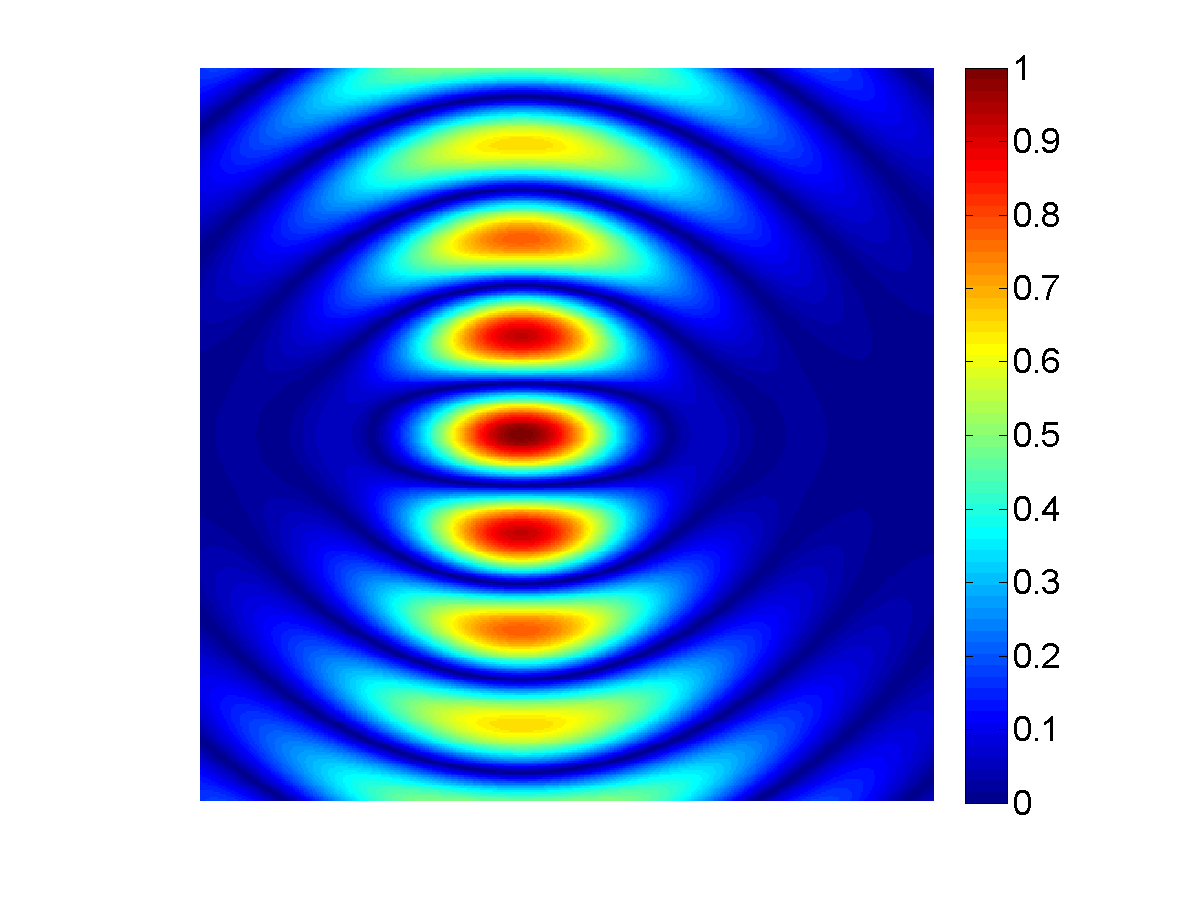} & \includegraphics[trim = 2cm 1cm 2cm 1cm, clip=true,width=6cm]{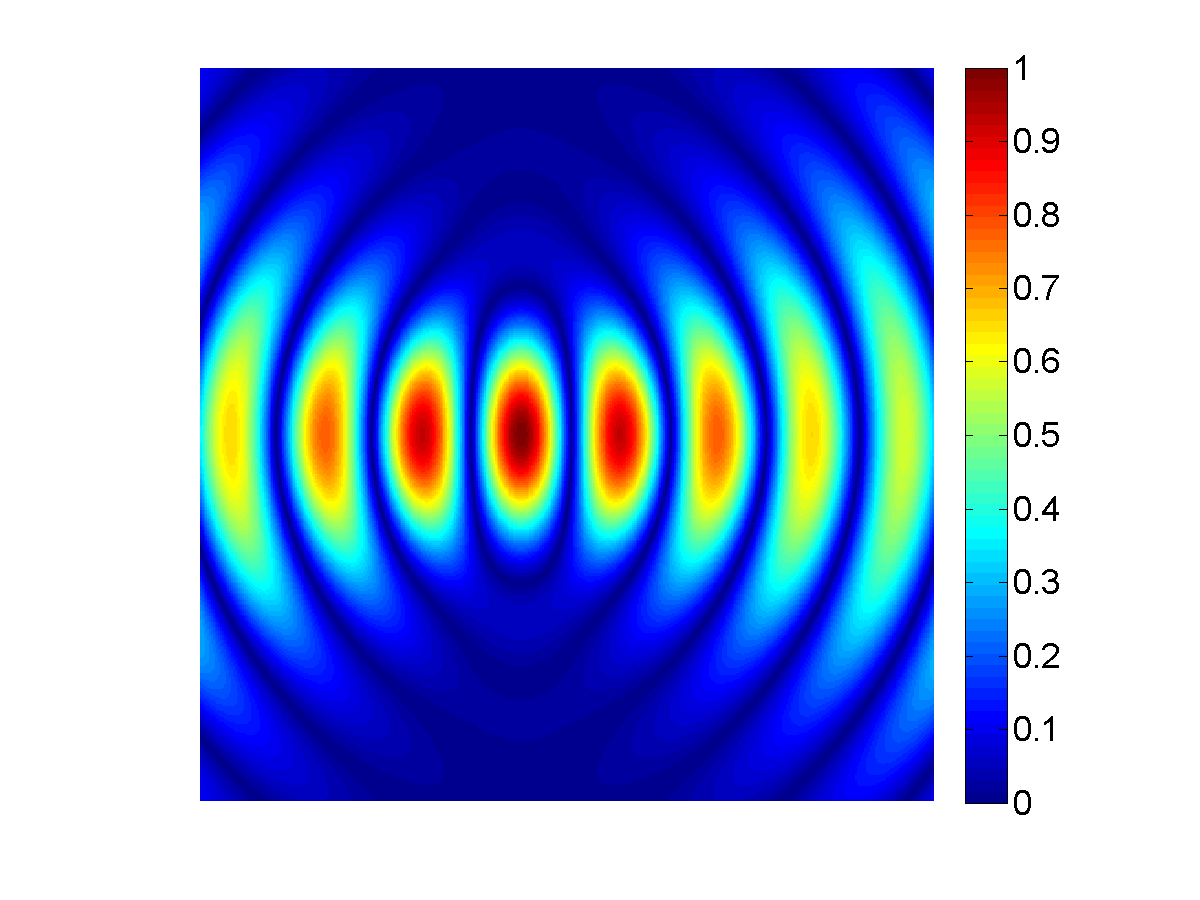}\\
     $|\langle\Phi_{11}(x,x_p),\Phi_{11}(x,x_q)\rangle_{L^2(\Gamma)}|$ & $|\langle\Phi_{22}(x,x_p),\Phi_{22}(x,x_q)\rangle_{L^2(\Gamma)}|$\\
     \includegraphics[trim = 2cm 1cm 2cm 1cm, clip=true,width=6cm]{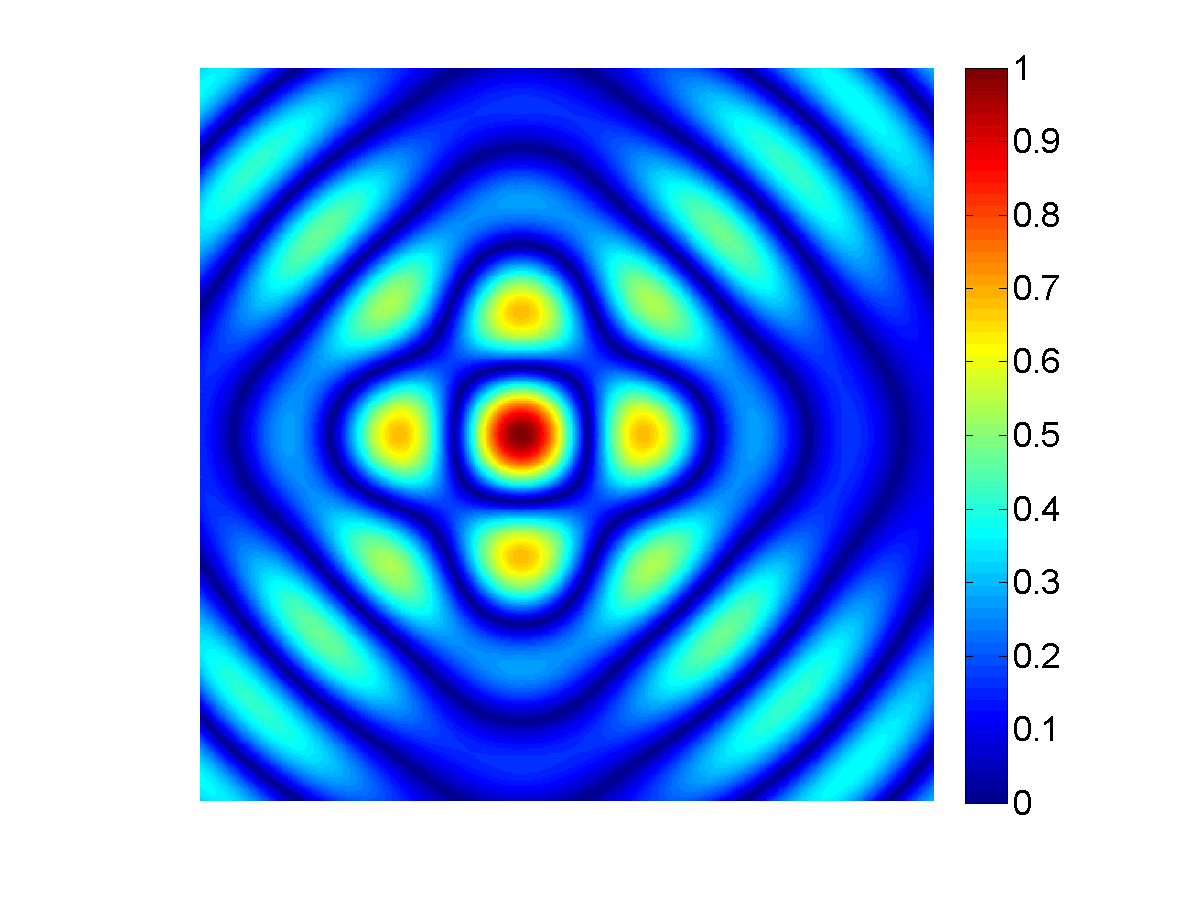} & \includegraphics[trim = 2cm 1cm 2cm 1cm, clip=true,width=6cm]{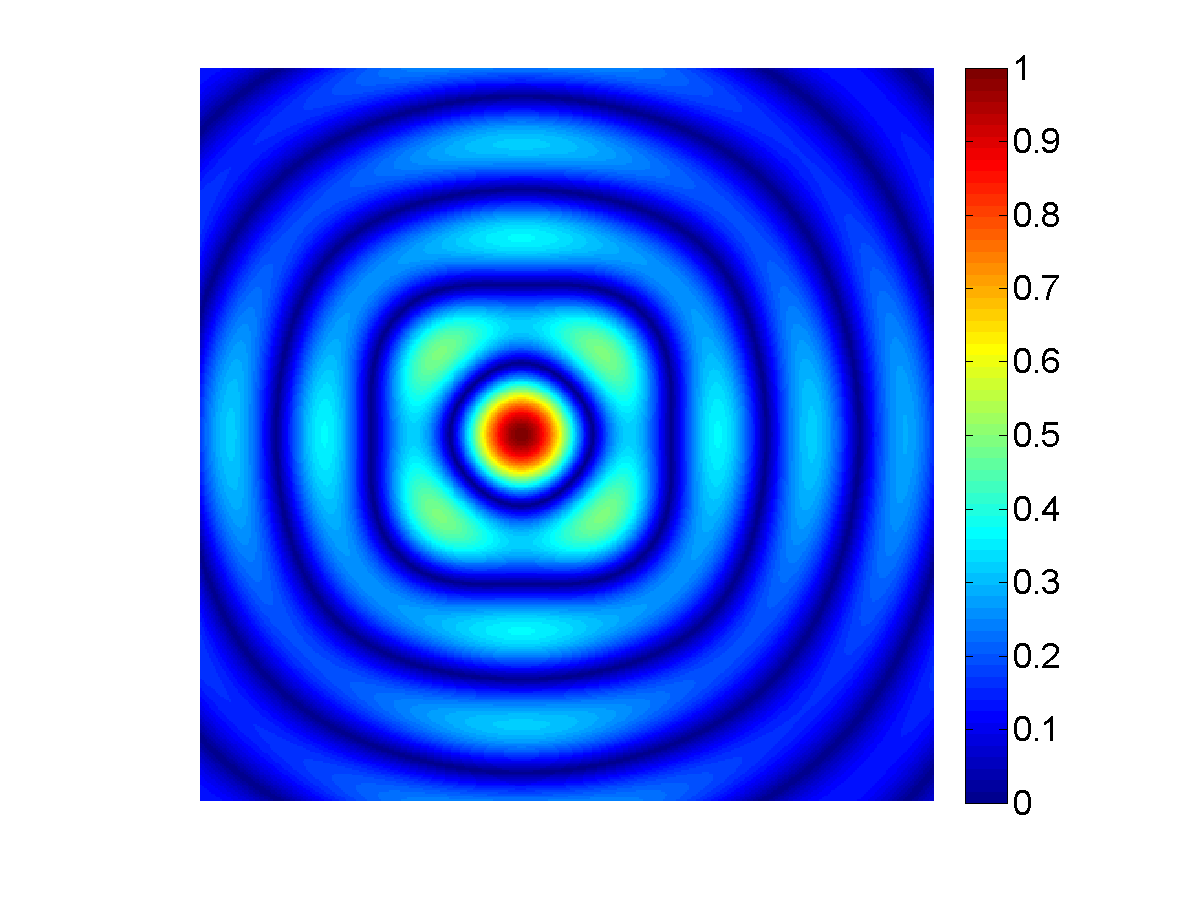}\\
     $|\langle\Phi_{12}(x,x_p),\Phi_{12}(x,x_q)\rangle_{L^2(\Gamma)}|$ & diagonal sum
  \end{tabular}
\caption{Cross products of $\Phi_{11}$, $\Phi_{22}$, $\Phi_{12}$, with the point scatterer
located at the $x_q=(-\frac{1}{4},0)$, and the diagonal sum, over the sampling domain
$\widetilde{\Omega}=[-2,2]^2$. The quantities are normalized.} \label{fig:fds1}
\end{figure}

\paragraph{Index function for multiple incident fields.} To arrive at an applicable effective
index function for multiple incidents, we
examine the behavior of the two-dimensional fundamental
solution $\Phi(x,y)$ more closely. We first consider the incident direction
$d_1=\frac{\sqrt{2}}{2}(1,1)$ and the polarization $p_1=\frac{\sqrt{2}}{2}(1,-1)$, and
assume that the local induced electrical current $J$ is proportional to
the polarized incident wave $E^{i}$ within the inhomogeneity located at $x_q$. Upon
ignoring the normalization, the
index function $\Psi(x_p;q)$ with the choice $q=p_1$ is roughly determined by
\begin{equation*}
   \begin{aligned}
        \langle \Phi(x,x_q)p_1,\Phi(x,x_p)p_1\rangle_{L^2(\Gamma)}  = & \langle
        \Phi_{11}(x,x_q)-\Phi_{12}(x,x_q),\Phi_{11}(x,x_p)-\Phi_{12}(x,x_p)\rangle_{L^2(\Gamma)}\\
             &    +\langle\Phi_{12}(x,x_q)-\Phi_{22}(x,x_q),\Phi_{12}(x,x_p)-\Phi_{22}(x,x_p)\rangle_{L^2(\Gamma)},
   \end{aligned}
\end{equation*}
where each term in the inner products, like $\Phi_{11}(x,x_p)-\Phi_{12}(x,x_q)$ and so on,
corresponds to one component of the vector field $E^s$.
Similarly, for the incident direction $d_2=\frac{\sqrt{2}}{2}(1,-1)$ and
the polarization $p_2=\frac{\sqrt{2}}{2}(1,1)$, the index function $\Phi(x_p;q)$
with the choice $q=p_2$ is roughly determined by
\begin{equation*}
  \begin{aligned}
    \langle\Phi(x,x_q)p_2,\Phi(x,x_p)p_2\rangle_{L^2(\Gamma)}=&
    \langle \Phi_{11}(x,x_q)+\Phi_{12}(x,x_q),\Phi_{11}(x,x_p)+\Phi_{12}(x,x_p)\rangle_{L^2(\Gamma)}\\
          &+\langle\Phi_{12}(x,x_q)+\Phi_{22}(x,x_q),\Phi_{12}(x,x_p)+\Phi_{22}(x,x_p)\rangle_{L^2(\Gamma)}.
  \end{aligned}
\end{equation*}
The quantities  $|\langle\Phi(x,x_q)p_1,\Phi(x,x_p)p_1\rangle_{L^2(\Gamma)}|$  and $|\langle
\Phi(x,x_q)p_2,\Phi(x,x_p)p_2\rangle_{L^2(\Gamma)}|$ after normalization are shown in Fig.\,\ref{fig:fds2}: they
exhibit  strong resonances. This is attributed to the ripples
predicted from the relation \eqref{Id} and mixing of different components of
the fundamental solution $\Phi$. Note that the resonances show up at different
regions, with their locations depending on the incident direction/polarization. We reiterate that
in case of one single point scatterer, the resonance
is completely predictable and one might be able to remove them
from the reconstructed images with suitable postprocessing. However, in the presence of multiple scatterers,
the discrimination of the resonances from the true scatterers can be highly nontrivial.
Therefore, it is highly desirable  to
design a sampling method that is free from significant resonances ab initio, by suitably adapting
the choice of $q$ in the probing function $\Phi(x,x_p)q$ and combining the indices
for multiple polarizations. Let us consider
the two polarizations $p_1=\frac{\sqrt{2}}{2}(1,-1)$ and $p_2=
\frac{\sqrt{2}}{2}(1,1)$, and measure all components of
the scattered electric field $E^s$, in the hope of focusing on
the genuine inhomogeneities. Then the performance of
the combined index with $p_1=\frac{\sqrt{2}}{2}(1,-1)$ and
$p_2=\frac{\sqrt{2}}{2}(1,1)$ is approximately given by
\begin{eqnarray*}
&&\langle\Phi(x,x_q)p_1,\Phi(x,x_p)p_1\rangle_{L^2(\Gamma)} +
        \langle\Phi(x,x_q)p_2,\Phi(x,x_p)p_2\rangle_{L^2(\Gamma)}\\
      &=& \langle \Phi_{11}(x,x_q),\Phi_{11}(x,x_p)\rangle_{L^2(\Gamma)}+2\langle\Phi_{12}(x,x_q),
             \Phi_{12}(x,x_p)\rangle_{L^2(\Gamma)}
             +\langle\Phi_{22}(x,x_q),\Phi_{22}(x,x_p)\rangle_{L^2(\Gamma)}\,.
\end{eqnarray*}
Even though the indices for the polarizations $p_1$ and $p_2$ separately
both have their own shadows of the point scatterer at $x_q=(-\frac{1}{4},0)$
(see Fig.\,\ref{fig:fds2}(a)-(b)), the combination
can isolate the  point scatterer very well (see Fig.\,\ref{fig:fds2}(c)).
Hence, the combination of multiple polarizations can remedy the
undesirable rippling phenomena. This suggests us the following combined
index $\Psi_c(x_p)$ for the multiple polarizations $\{p_l\}_{l=1}^L$\,:
\begin{equation*}
  \Psi_c(x_p) = \frac{1}{L}\sum_{l=1}^L\Psi(x_p;p_l)
\end{equation*}
where $\Psi(x_p;p_l)$ refers to the index function for the polarization $p_l$ (hence the $l$th incident field).

\begin{figure}[h] \centering
\begin{tabular}{ccc}
  \includegraphics[trim = 2cm 1cm 2cm 1cm, clip=true,width=5cm]{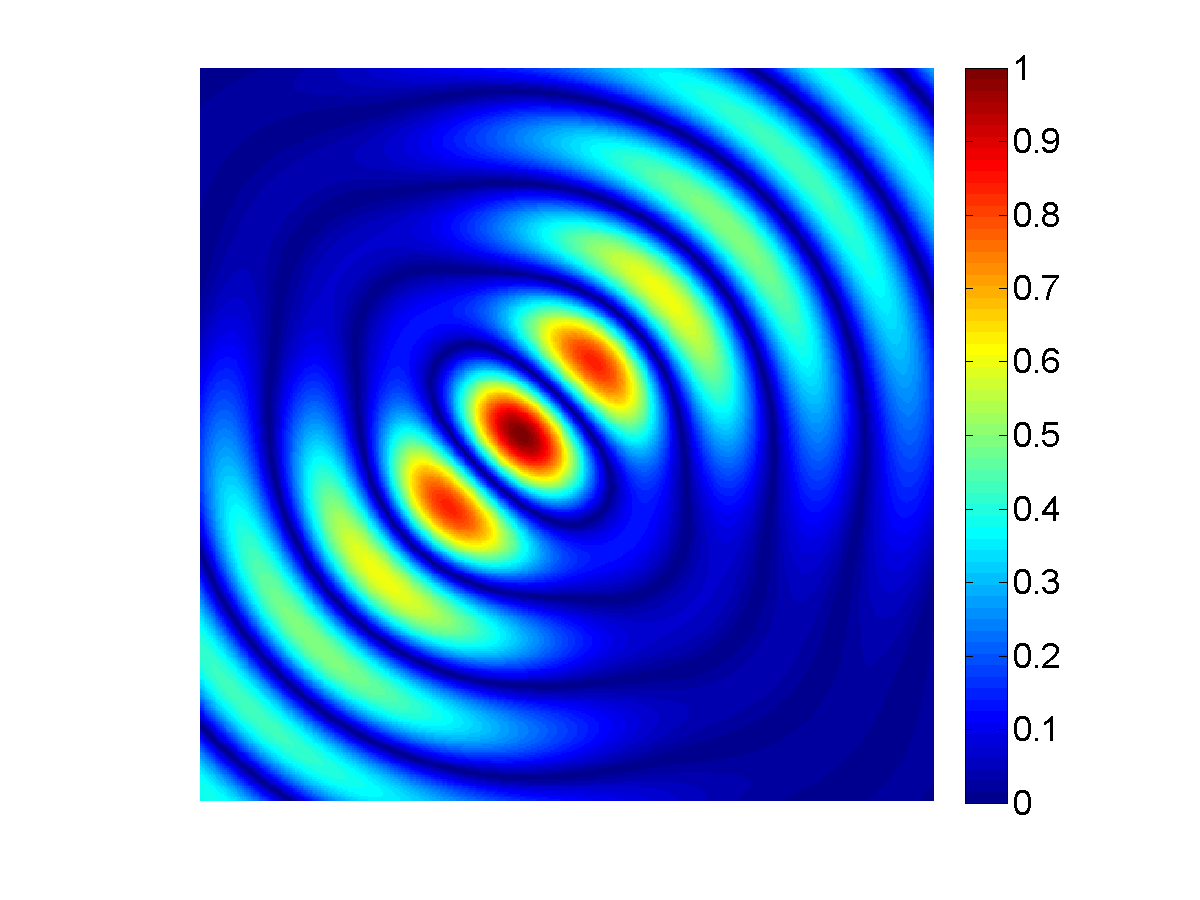} & \includegraphics[trim = 2cm 1cm 2cm 1cm, clip=true,width=5cm]{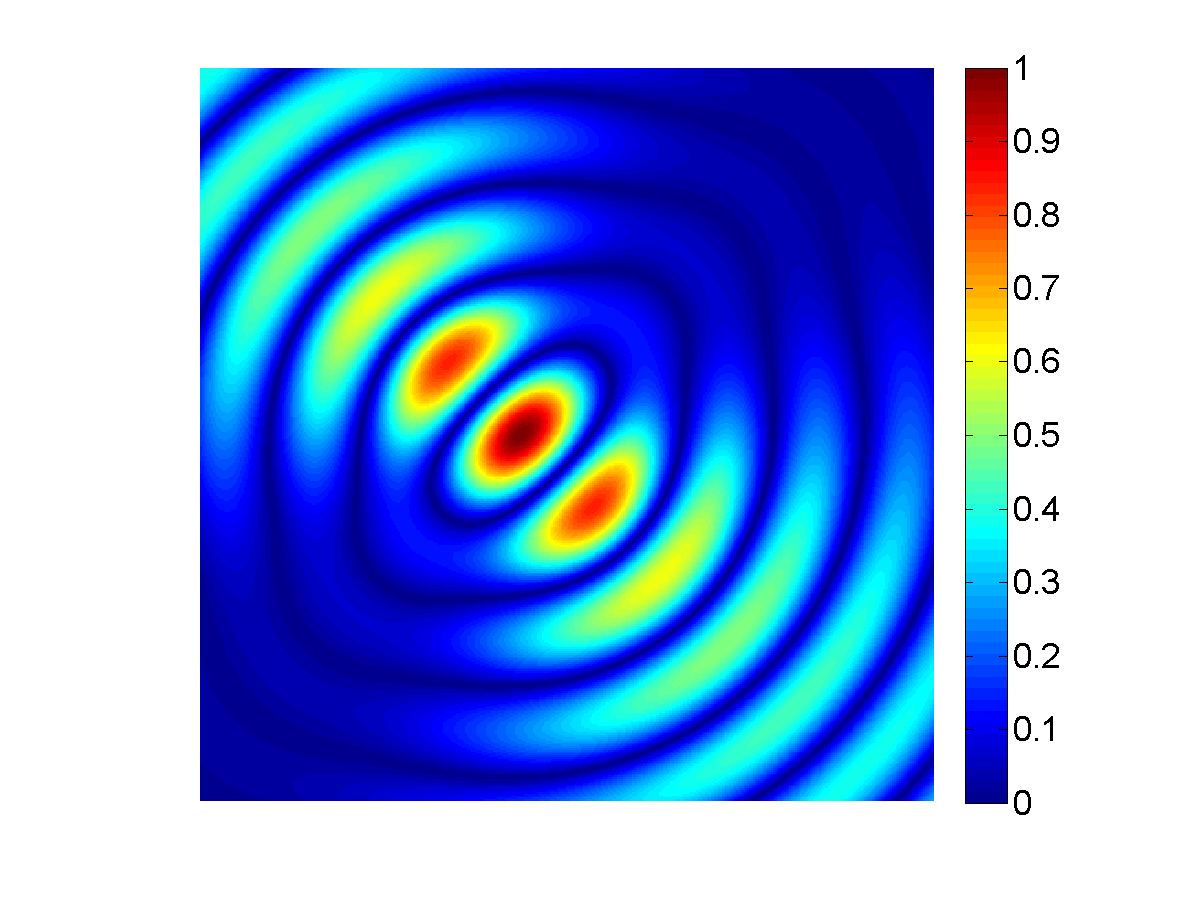} & \includegraphics[trim = 2cm 1cm 2cm 1cm, clip=true,width=5cm]{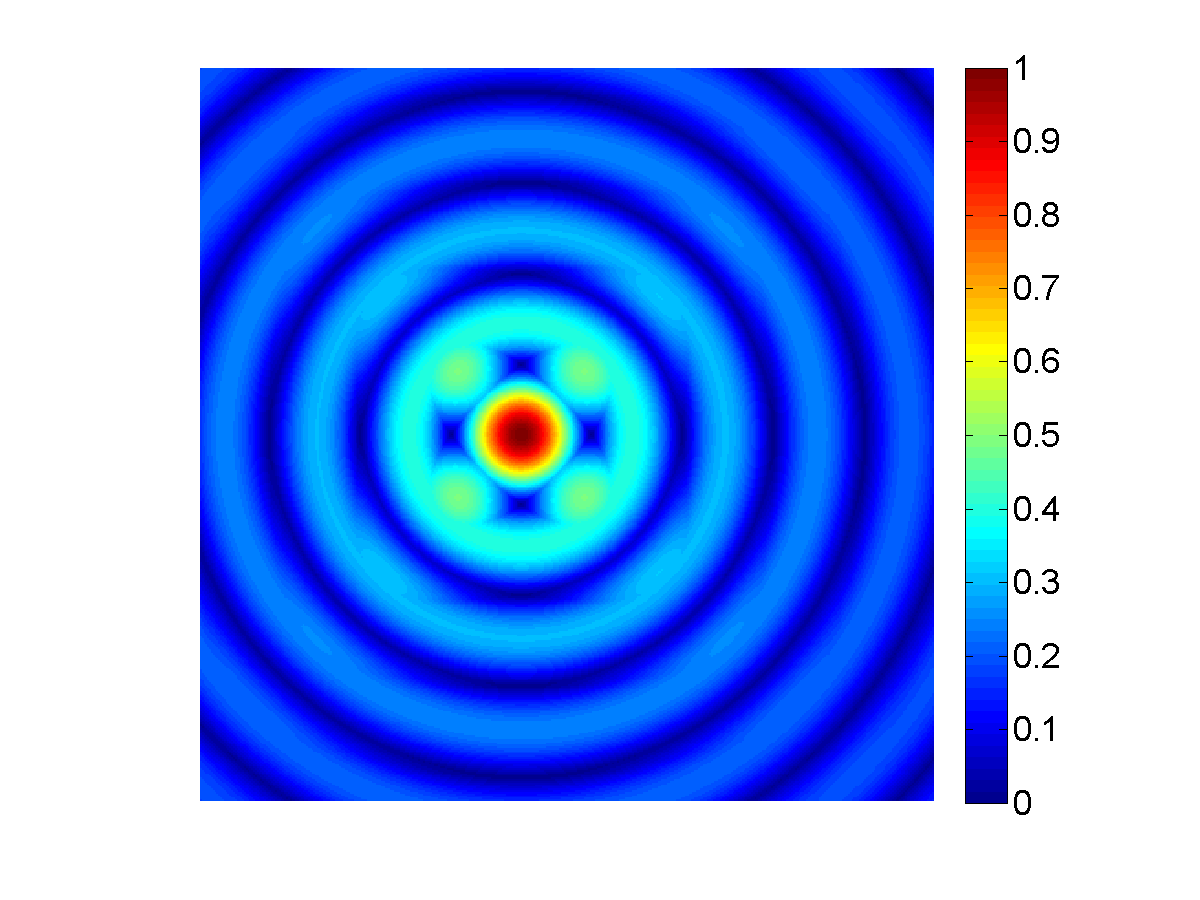}\\
   (a) $|\langle\Phi(x,x_p)p_1,\Phi(x,x_q)p_1\rangle_{L^2(\Gamma)}|$ &
   (b) $|\langle\Phi(x,x_p)p_2,\Phi(x,x_q)p_2\rangle_{L^2(\Gamma)}|$ &
   (c) combination
\end{tabular}
\caption{The cross products for polarizations $p_1$, $p_2$ and the combination with direct
sum over the sampling domain $\widetilde{\Omega}=[-2,2]^2$. The point scatterer
is located at $x_q=(-\frac{1}{4},0)$. The quantities are normalized.} \label{fig:fds2}
\end{figure}

\section{Numerical experiments and discussions}

In this section, we present two- and three-dimensional numerical examples to showcase the features,
e.g., the accuracy and robustness, of the proposed DSM for detecting
scatterers. The examples are designed to illustrate the features of the method and
to validate the theoretical findings in Section 3. Numerical results for both exact and noisy data will be presented.
In all examples, the wavelength $\lambda$ is set to $1$, and the wavenumber $k$ is $2\pi$.
The noisy scattered electric data $E^s_\delta$ is generated pointwise by
\begin{equation*}
  E_\delta^s(x) = E^s(x) + \varepsilon\max_{x\in\Gamma}|E^s(x)|\zeta(x),
\end{equation*}
where $\varepsilon$ denotes the relative noise level, and both real and imaginary parts
of the random variable $\zeta(x)$ follow the standard Gaussian distribution with zero
mean and unit variance. All the computations were performed on a dual-core laptop
using \texttt{MATLAB} R2009a.

\subsection{Two-dimensional examples}

Now we present numerical results for two-dimensional examples. Unless otherwise specified,
two incident fields, i.e., $d_1=\frac{\sqrt{2}}{2}(1,1)^\mathrm{t}$
and $d_2=\frac{\sqrt{2}}{2}(-1,1)^\mathrm{t}$ (accordingly, the polarizations
$p_1=\frac{\sqrt{2}}{2}(1,-1)^\mathrm{t}$ and $p_2=\frac{\sqrt{2}}{2}(1,1)^\mathrm{t}$),
are employed. For each incident field $E^i$, the
scattered electric field $E^s$ is measured at $30$ points uniformly distributed
on a circle of radius $5$ centered at the origin. The sampling domain $\widetilde\Omega$
is fixed at $[-2,2]^2$, which is divided into small squares of equal
side length $h=0.01$. The index function $\Psi$ as an estimate to the scatterer
support will be displayed for each example.

Our first example shows the method for one single scatterer, which
confirms the theoretical analysis of the index $\Psi_c$ in Section 3.2.
\begin{exam}\label{exam:1sc}
The example considers one square scatterer of side length $0.3$ centered at the
point $(-\frac{1}{4},0)$. The true inhomogeneity coefficient $\eta(x)$ of the scatterer is
set to be $1$, namely $n(x)=\sqrt{2}$.
\end{exam}

\begin{figure}[h]
 \centering
  \begin{tabular}{cccc}
    \includegraphics[trim = 3.5cm 1cm 2cm 1cm, clip=true,width=3.7cm]{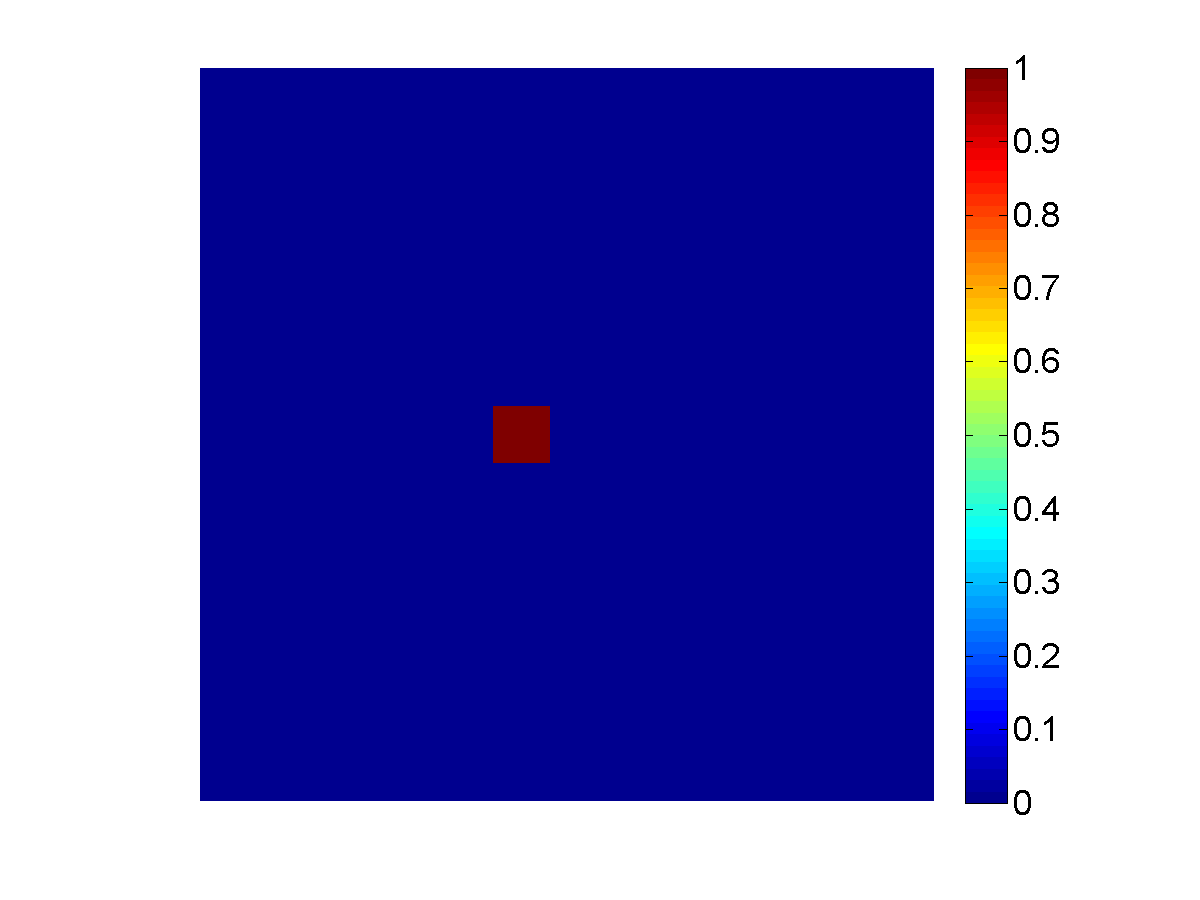} & \includegraphics[trim = 3.5cm 1cm 2cm 1cm, clip=true,width=3.7cm]{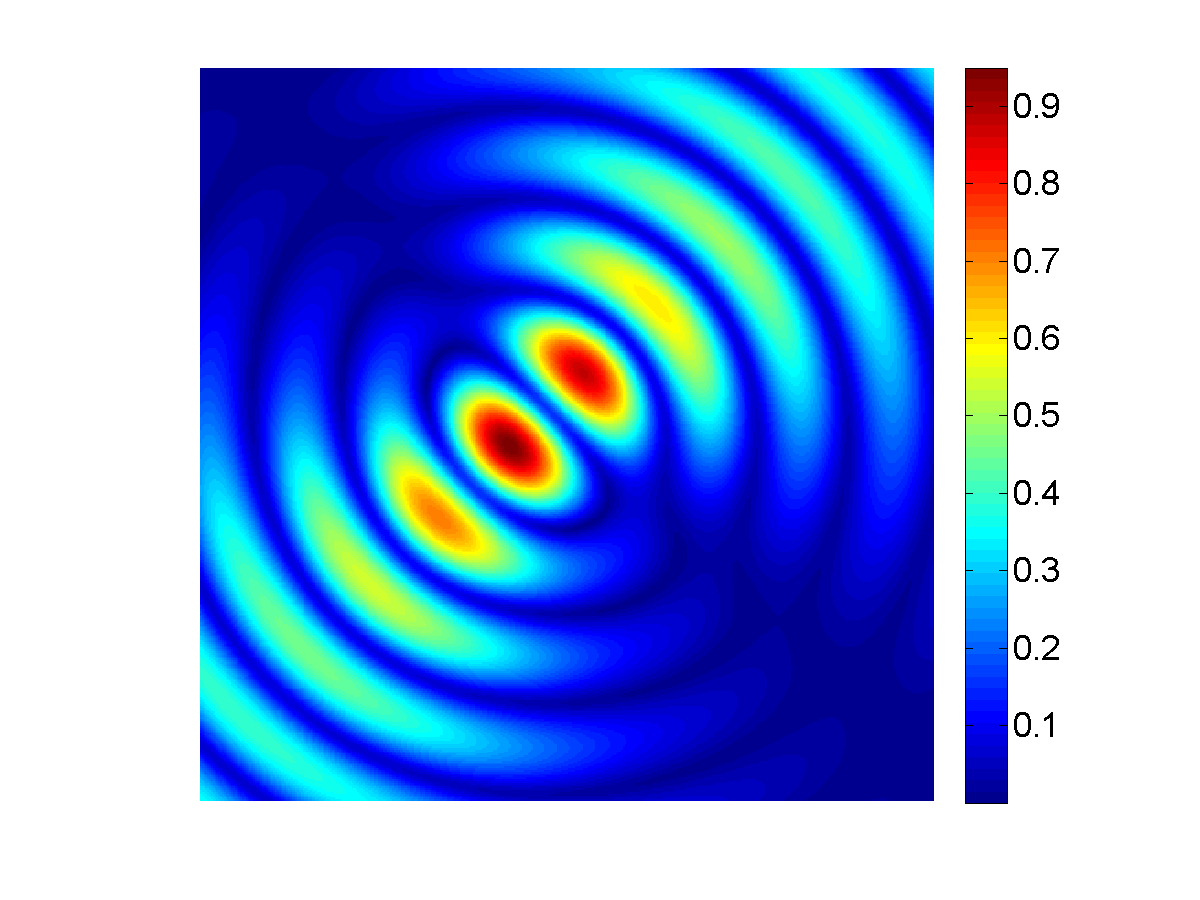}
    & \includegraphics[trim = 3.5cm 1cm 2cm 1cm, clip=true,width=3.7cm]{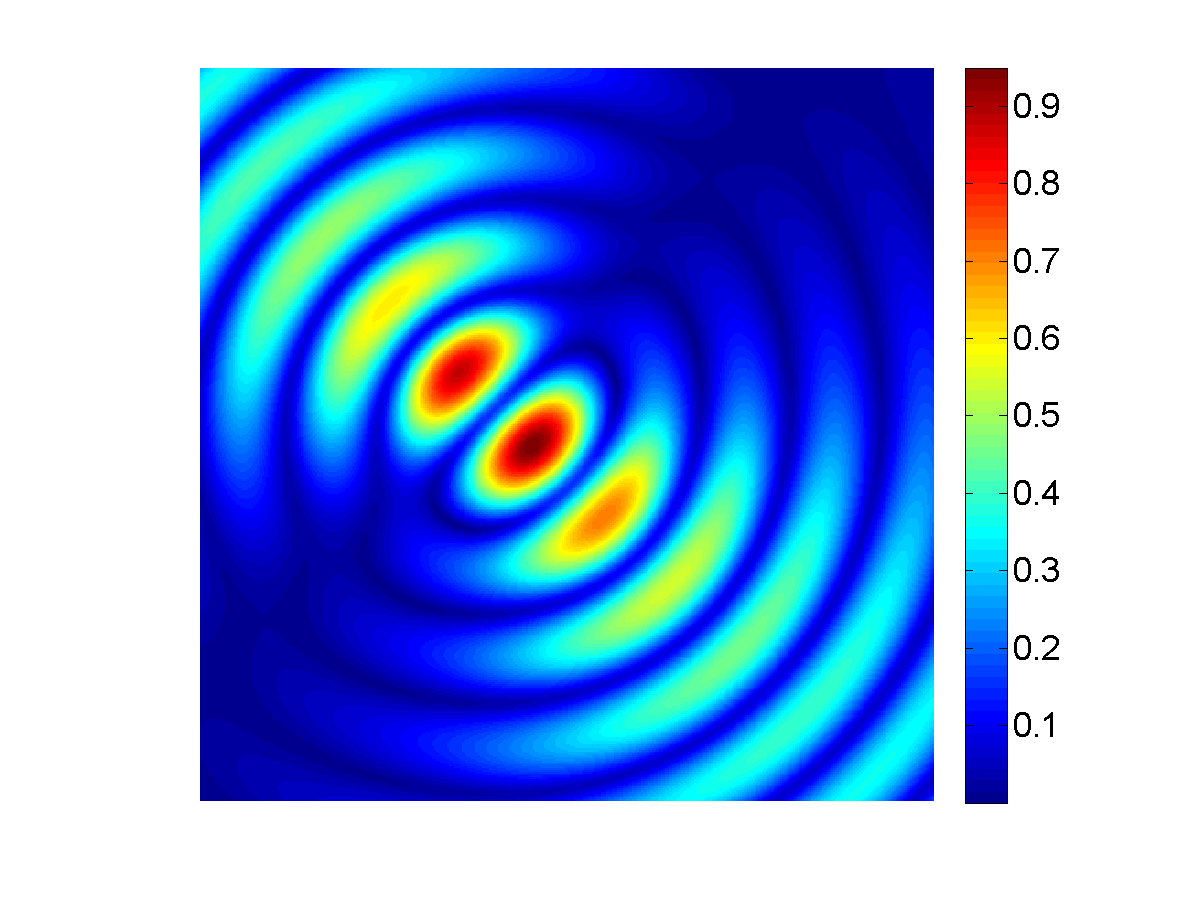} & \includegraphics[trim = 3.5cm 1cm 2cm 1cm, clip=true,width=3.7cm]{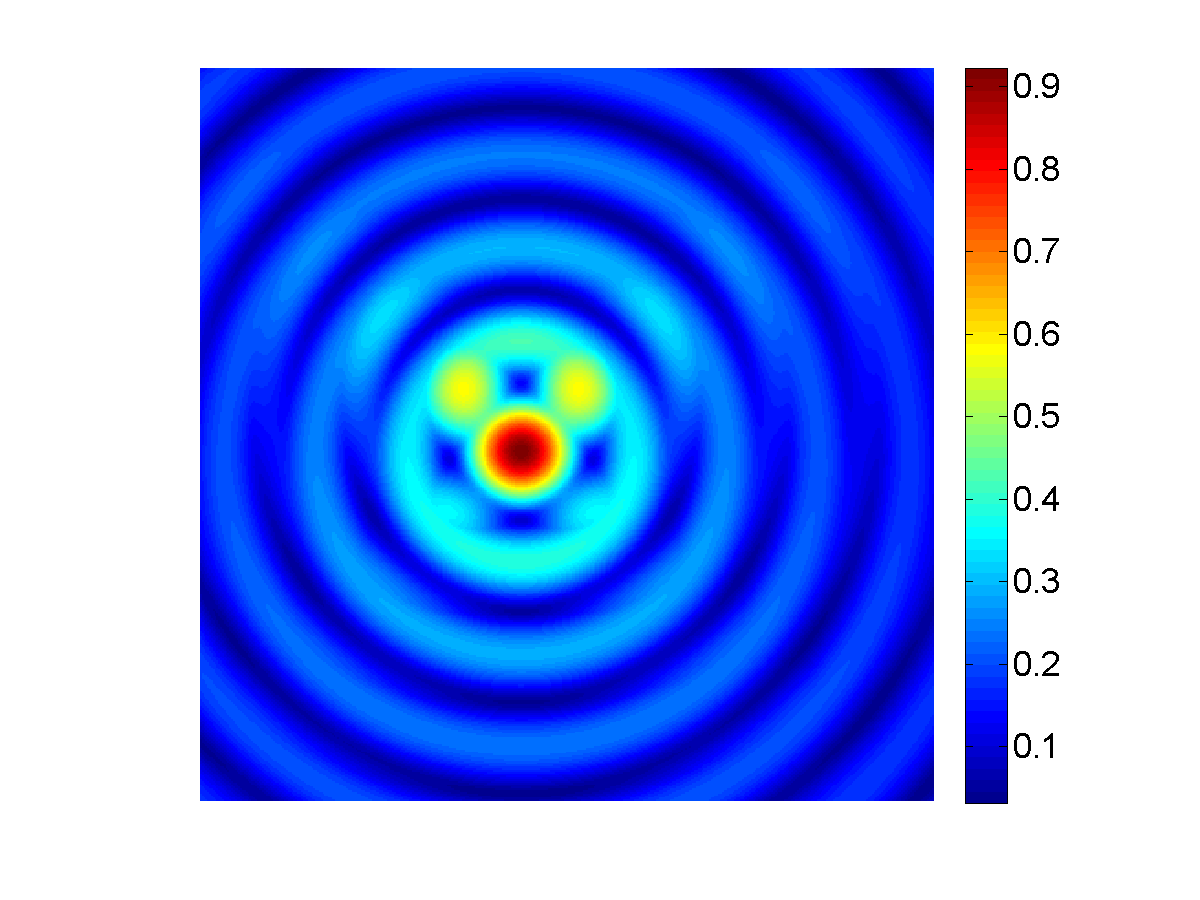}\\
    \includegraphics[trim = 3.5cm 1cm 2cm 1cm, clip=true,width=3.7cm]{ex1_rec_true.png} & \includegraphics[trim = 3.5cm 1cm 2cm 1cm, clip=true,width=3.7cm]{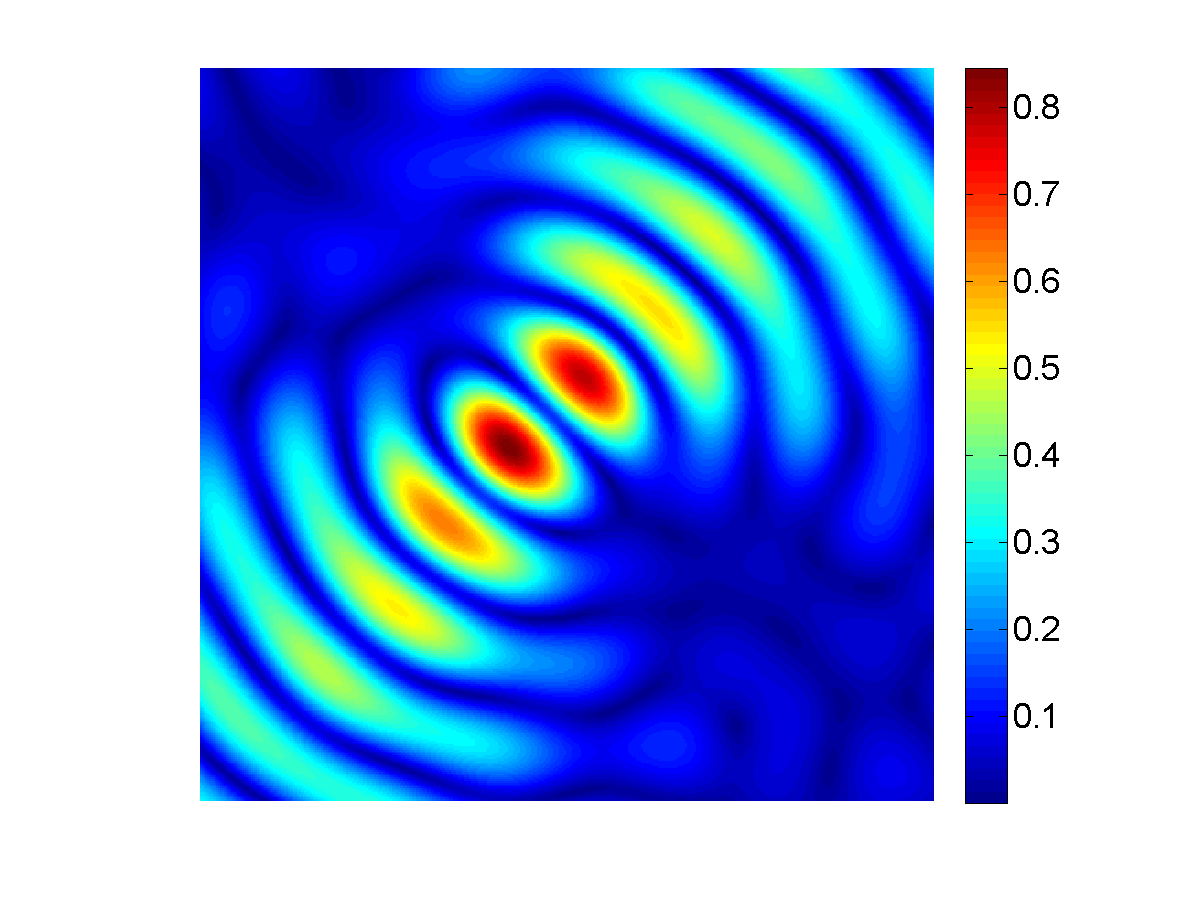}
    & \includegraphics[trim = 3.5cm 1cm 2cm 1cm, clip=true,width=3.7cm]{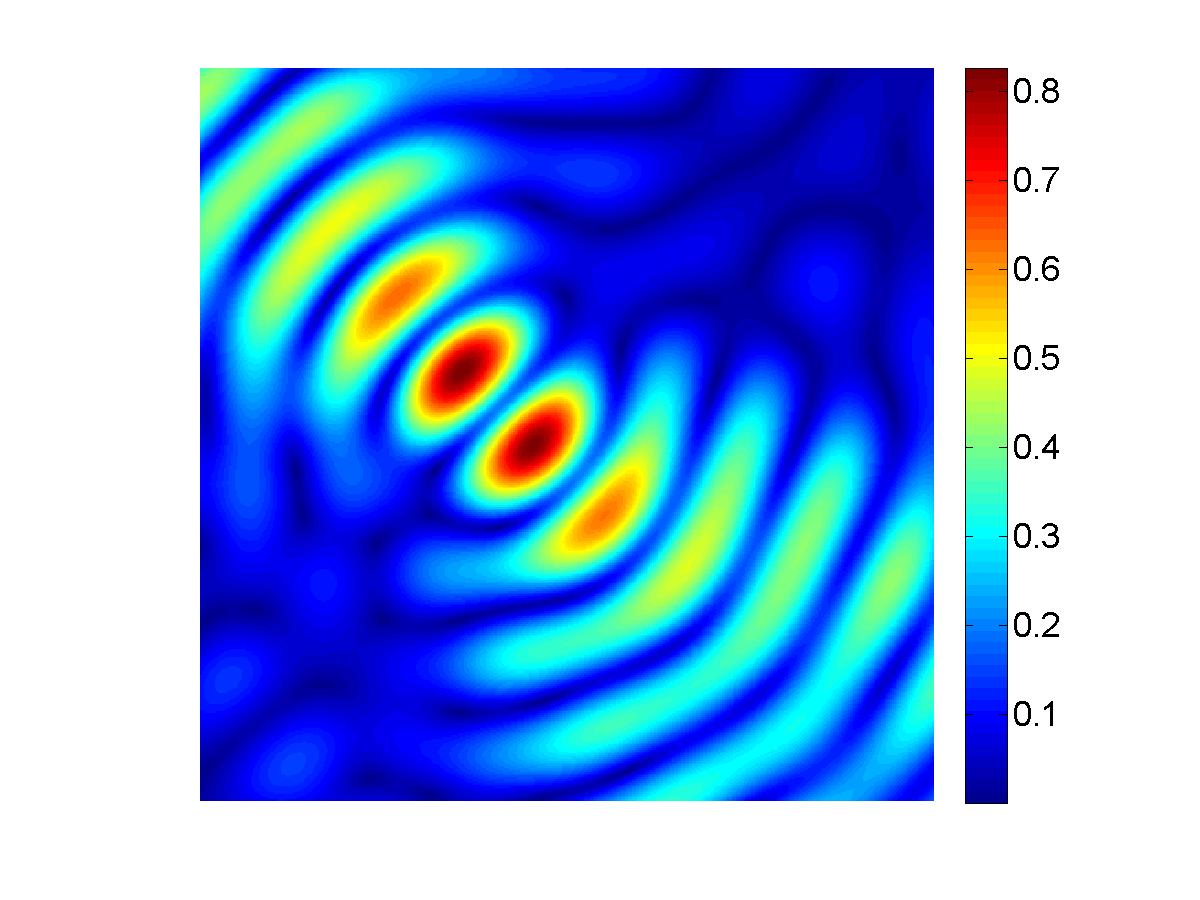} & \includegraphics[trim = 3.5cm 1cm 2cm 1cm, clip=true,width=3.7cm]{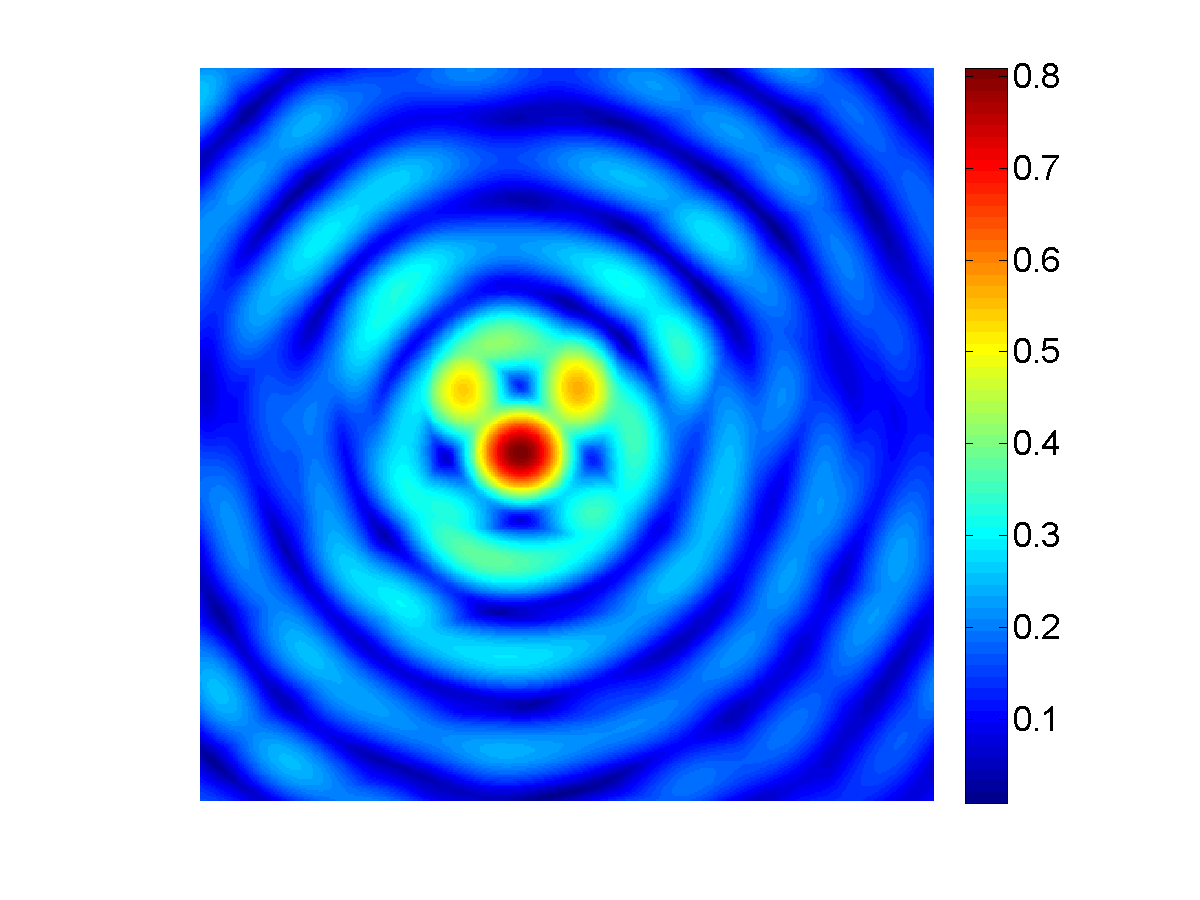}\\
    (a) true scatterer & (b) index $\Psi(x_p;p_1)$ & (c) index $\Psi(x_p;p_2)$ & (d) index $\Psi_c$
  \end{tabular}
  \caption{Numerical results for Example \ref{exam:1sc}. The first and
  second row refers to index functions for the exact data and the noisy
  data with $\varepsilon=20\%$ noise.}\label{fig:1sc}
\end{figure}

The numerical results for Example \ref{exam:1sc} are shown in Fig.\,\ref{fig:1sc}. We observe that for sampling points
 close to the physical scatterer,
the index function $\Psi$ is relatively large; otherwise it takes relatively small values.
Note that the image for one incident field $E^i$ exhibits obvious resonances, with resonance
locations depending on the incident direction. Here, the resonance behavior agree excellently
with the theoretical analysis for one single point scatterer in Section 3.2,
hence it might be removed by applying a suitable postprocessing
procedure. Nonetheless, the use of two incident fields can greatly mitigate the resonances.
Therefore, the index function $\Psi_c$ does provide an
accurate and reliable indicator for the location of the scatterer; see Fig.\,\ref{fig:1sc}(d).
The presence of $\varepsilon=20\%$ noise
in the measured data does not affect the shape of the index function $\Psi_c$,
thereby showing the robustness of the method with respect to noise.

Our second example illustrates the method for two separate scatterers.
\begin{exam}\label{exam:2sc}
We consider two square scatterers, with the inhomogeneity coefficient $\eta$ being $1$
in both scatterers.
The following two cases are investigated:
\begin{itemize}
  \item[(a)] The two scatterers are of side length $0.2$, and located at $(-0.8,-0.7)$ and $(0.3,0.8)$, respectively.
  \item[(b)] The two scatterers are of side length $0.3$, and located at $(-0.45,-0.35)$ and $(0.05,0.15)$, respectively.
\end{itemize}
\end{exam}

\begin{figure}[h!]
  \centering
  \begin{tabular}{cccc}
    \includegraphics[trim = 3.5cm 1cm 2cm 1cm, clip=true,width=3.7cm]{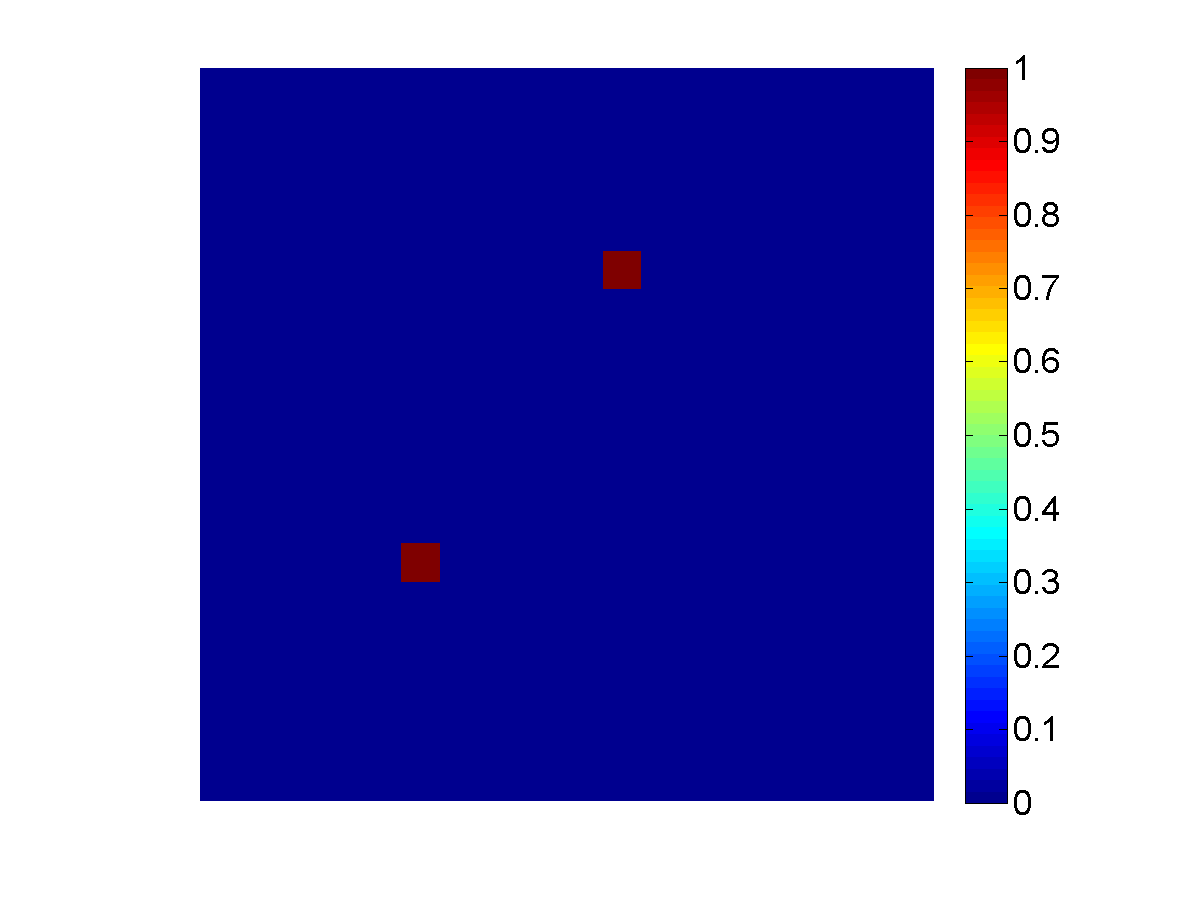} & \includegraphics[trim = 3.5cm 1cm 2cm 1cm, clip=true,width=3.7cm]{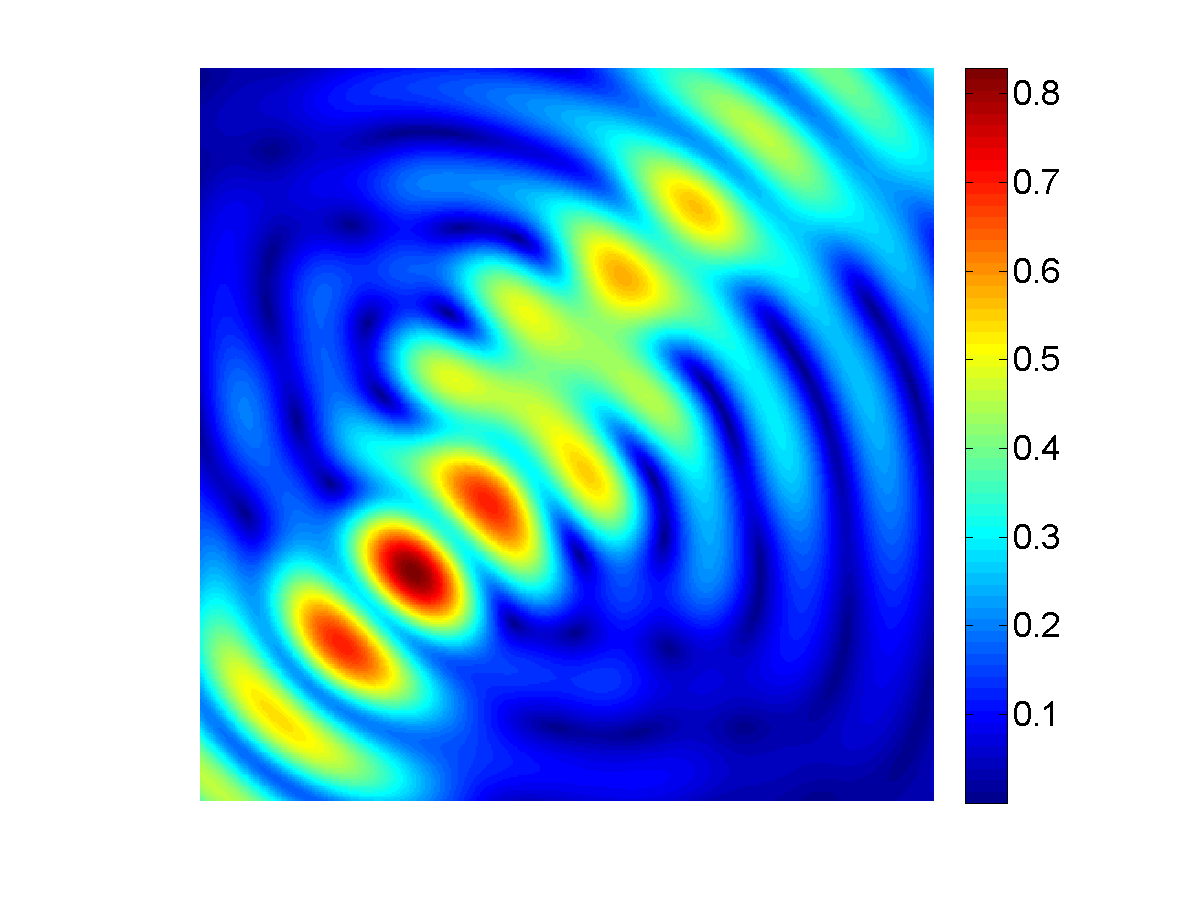}
    & \includegraphics[trim = 3.5cm 1cm 2cm 1cm, clip=true,width=3.7cm]{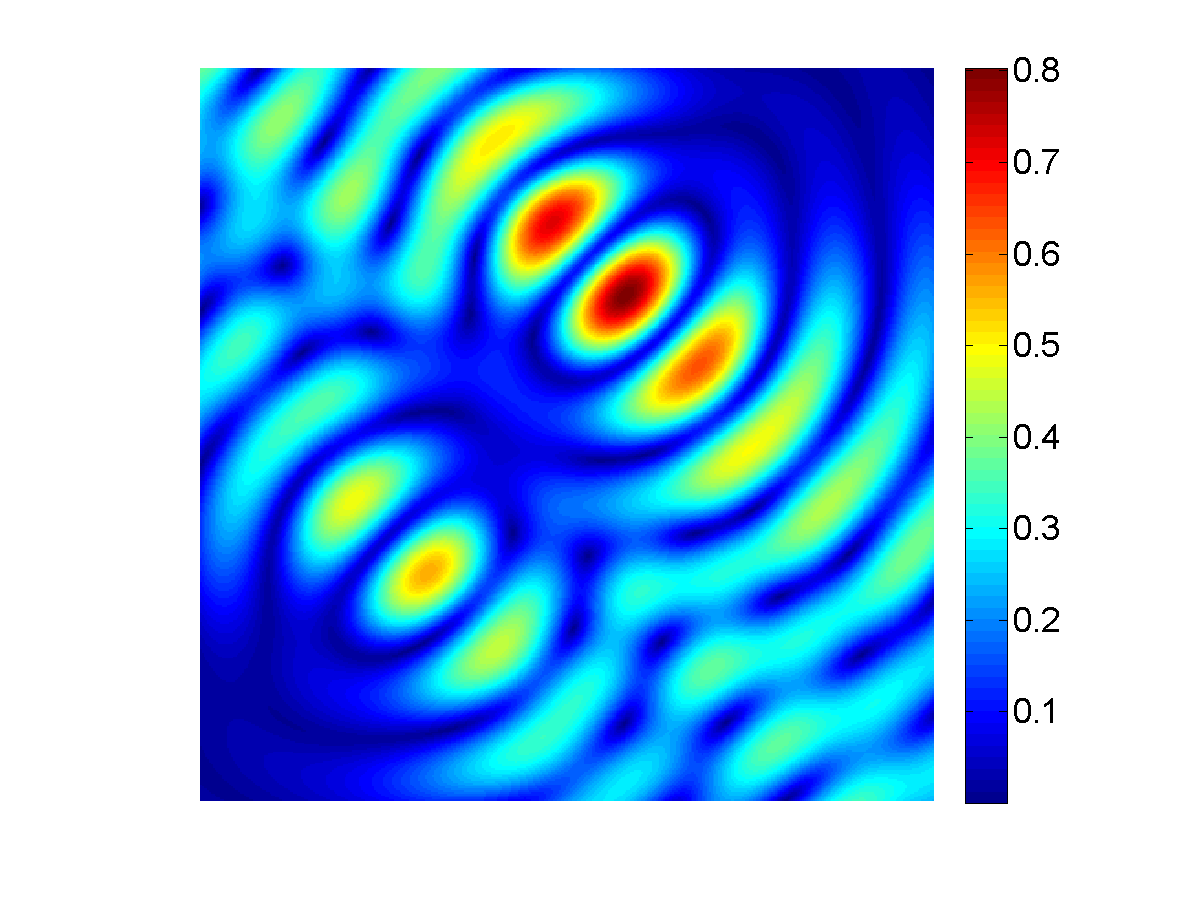} & \includegraphics[trim = 3.5cm 1cm 2cm 1cm, clip=true,width=3.7cm]{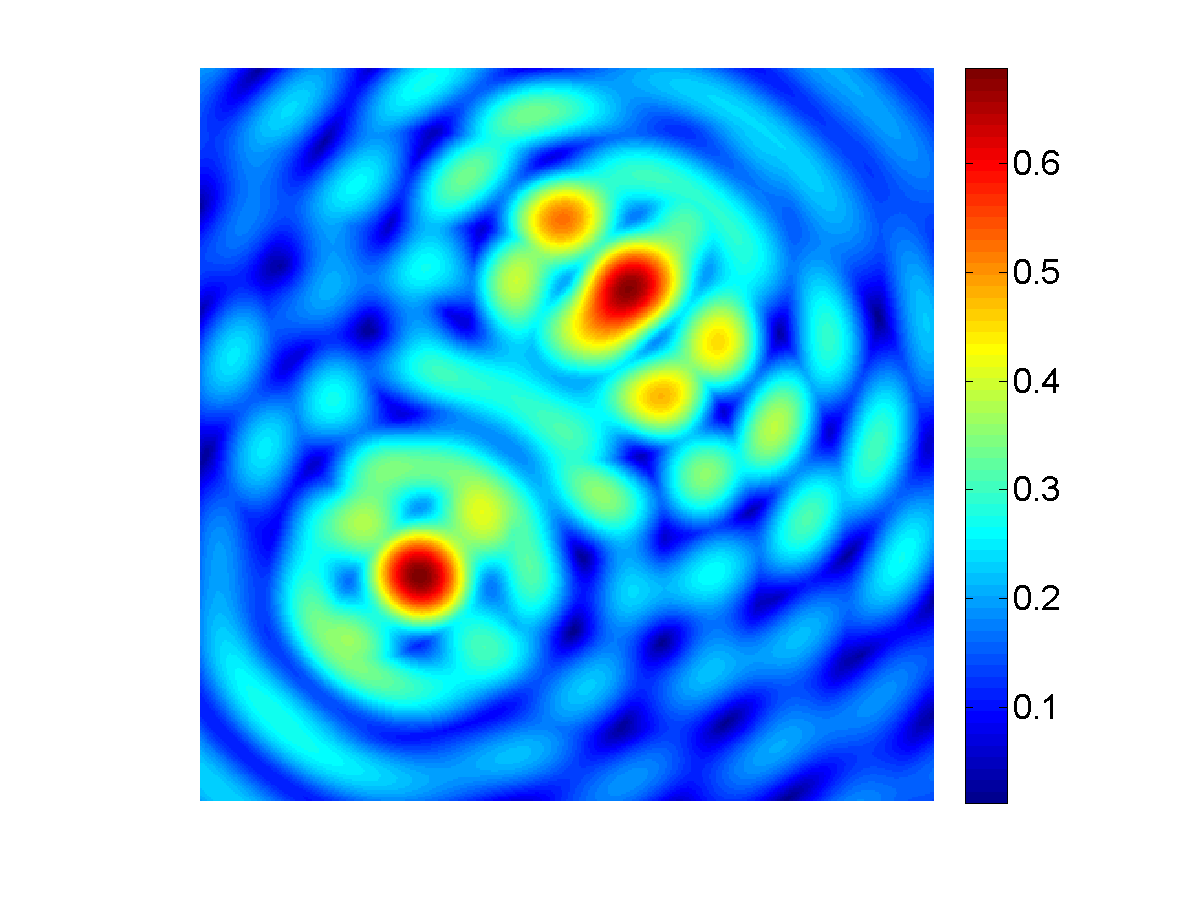}\\
    \includegraphics[trim = 3.5cm 1cm 2cm 1cm, clip=true,width=3.7cm]{ex2_rec_true.png} & \includegraphics[trim = 3.5cm 1cm 2cm 1cm, clip=true,width=3.7cm]{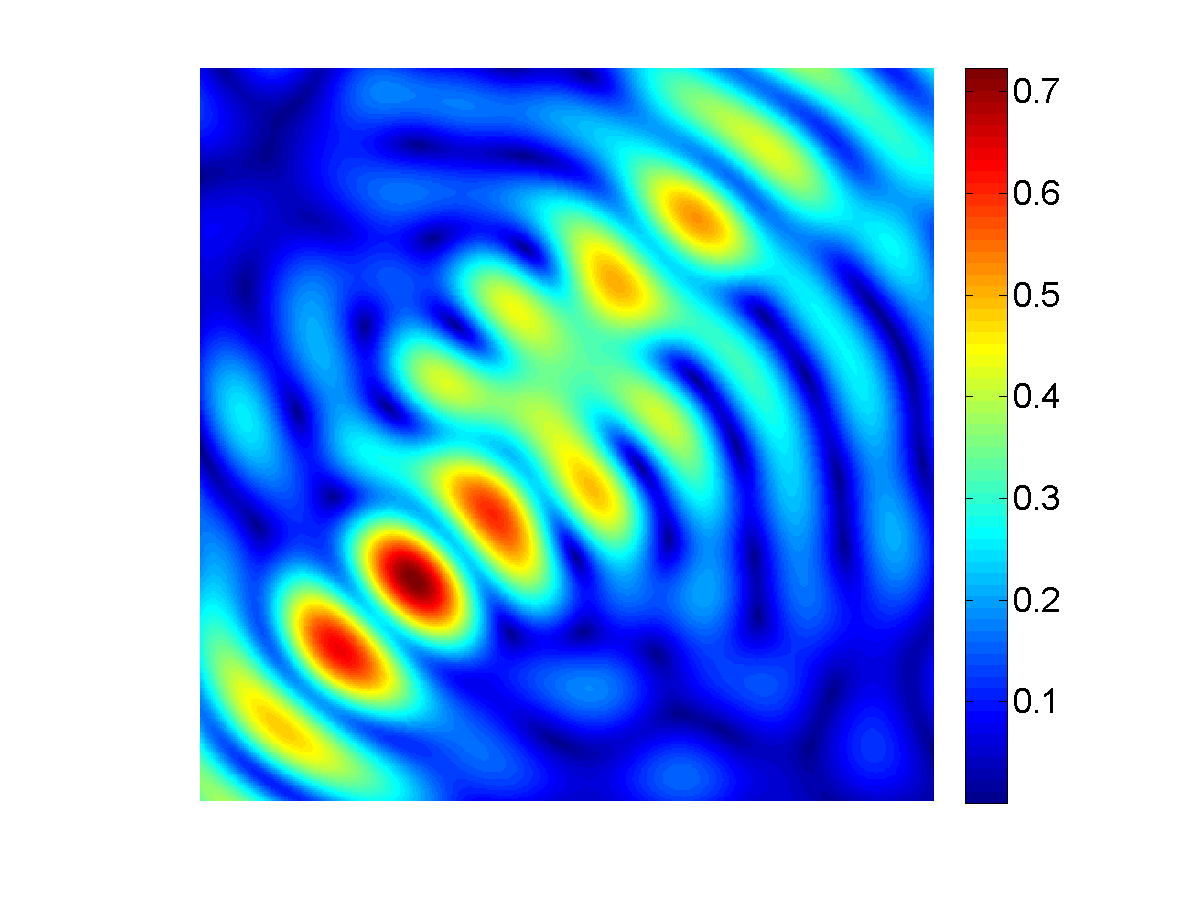}
    & \includegraphics[trim = 3.5cm 1cm 2cm 1cm, clip=true,width=3.7cm]{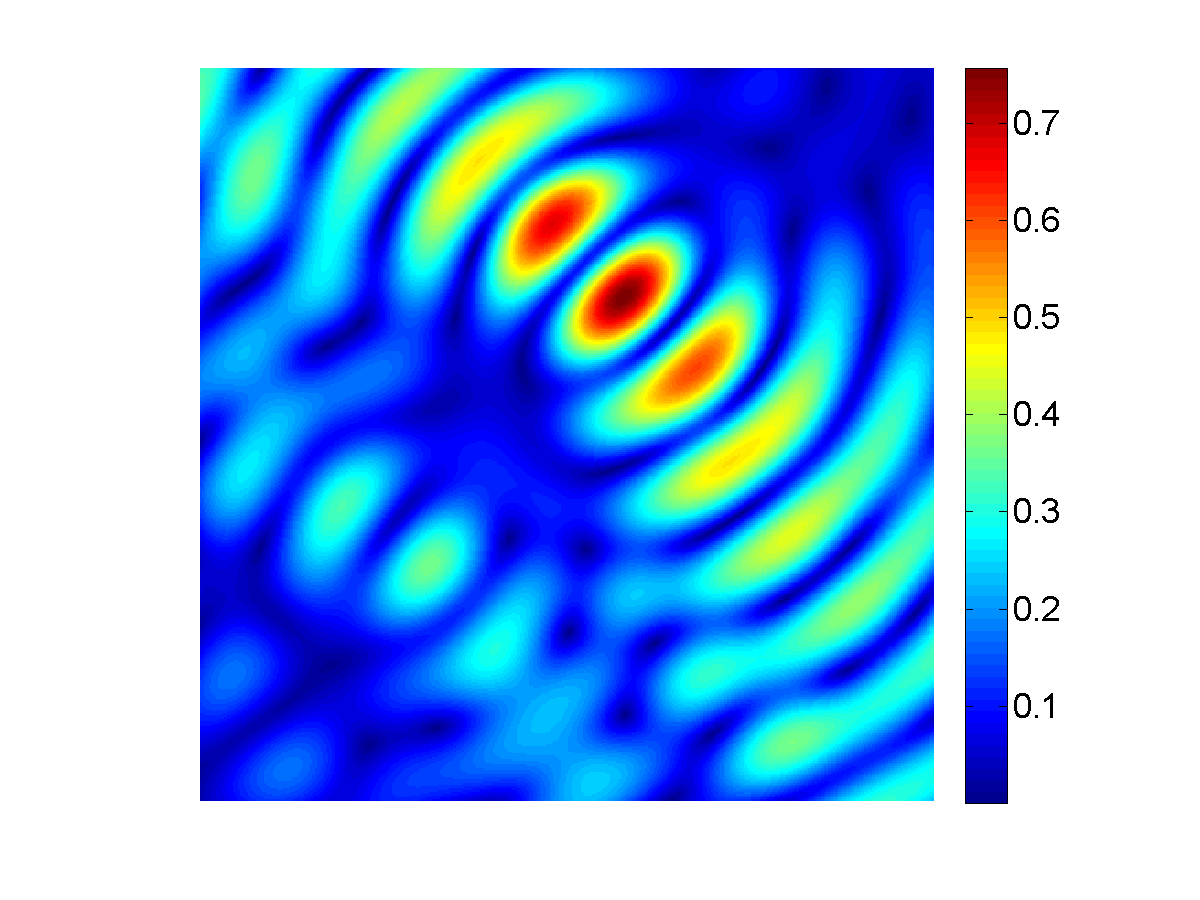} & \includegraphics[trim = 3.5cm 1cm 2cm 1cm, clip=true,width=3.7cm]{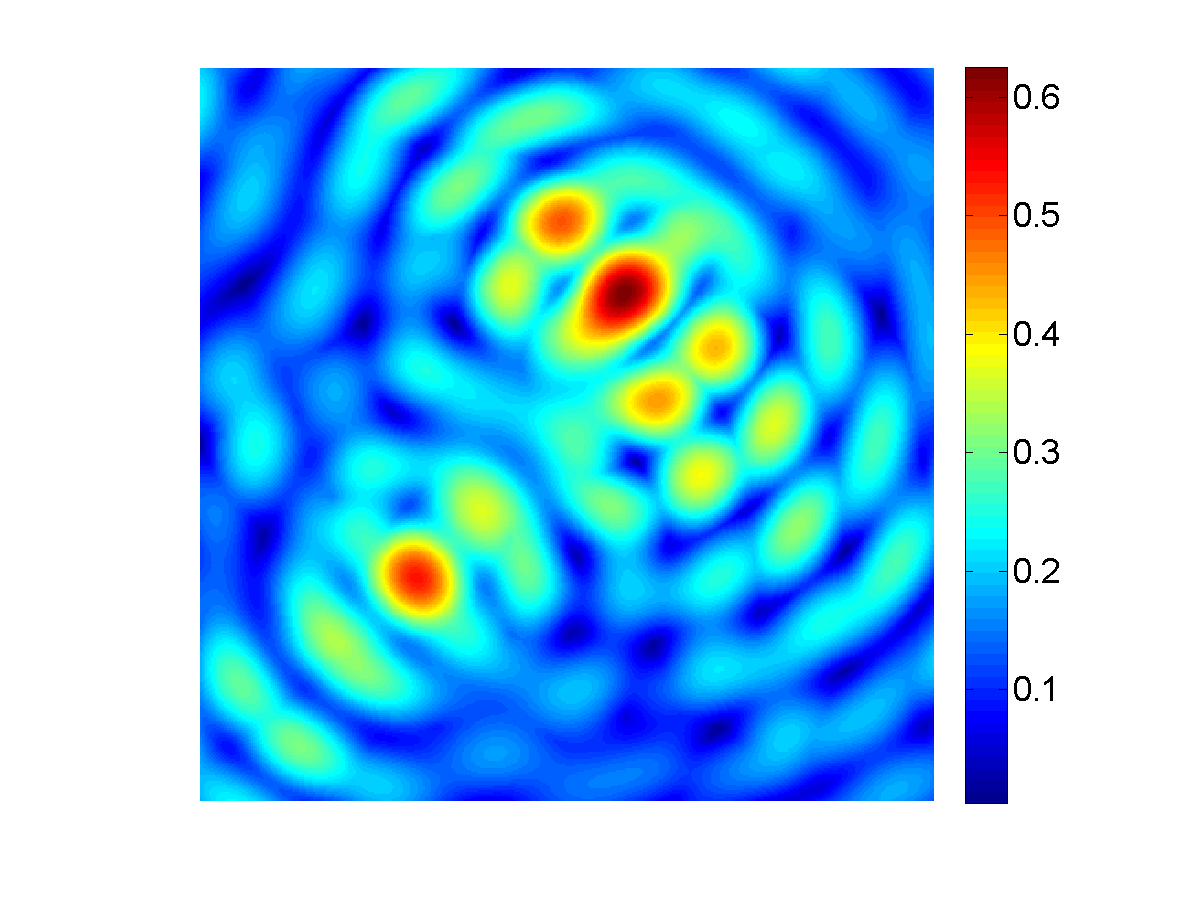}\\
        (a) true scatterers & (b) index $\Psi(x_p;p_1)$ & (c) index $\Psi(x_p;p_2)$ & (d) index $\Psi_c$
  \end{tabular}
  \caption{Numerical results for Example \ref{exam:2sc}(a). The first and
  second row refer to index functions for the exact data and the noisy
  data with $\epsilon=20\%$ noise.}\label{fig:2sca}
\end{figure}

The two scatterers in Example \ref{exam:2sc}(a) are well apart from each other.
In this example, each of the two incident fields tends to highlight only one of the
two scatterers, with the index value for one scatterer being much higher than for
the other; see Figs.\,\ref{fig:2sca}(b)-(c). Since the two scatterers
are well apart, the interactions between them is somehow weak, and the resonance pattern
for the point scatterer is well kept. However, the two incident
fields together give a clear discrimination of the two scatterers, with their
locations satisfactorily recovered for both exact data and the data with $\epsilon=20\%$ noise;
see Fig.\,\ref{fig:2sca}(d).
%


\begin{figure}[h!]
  \centering
  \begin{tabular}{cccc}
    \includegraphics[trim = 3.5cm 1cm 2cm 1cm, clip=true,width=3.7cm]{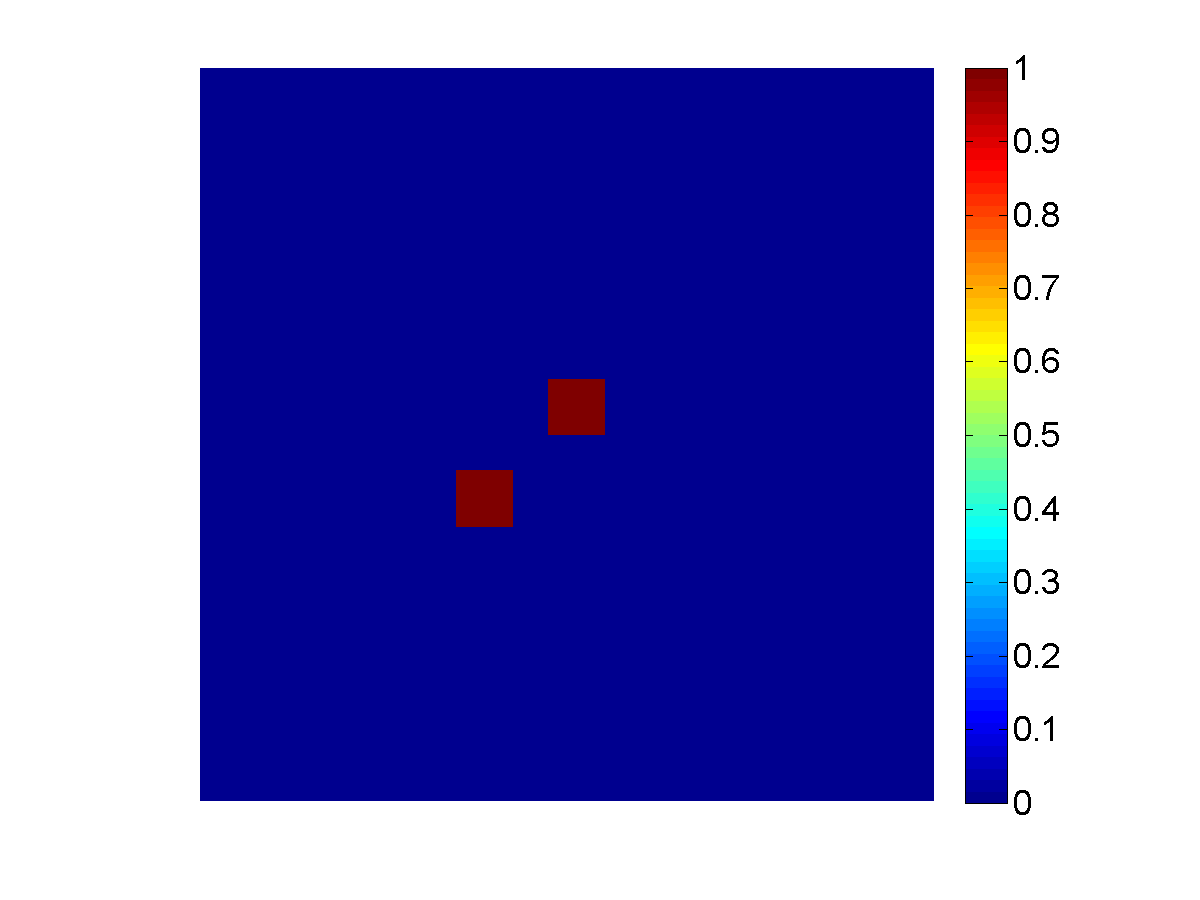} & \includegraphics[trim = 3.5cm 1cm 2cm 1cm, clip=true,width=3.7cm]{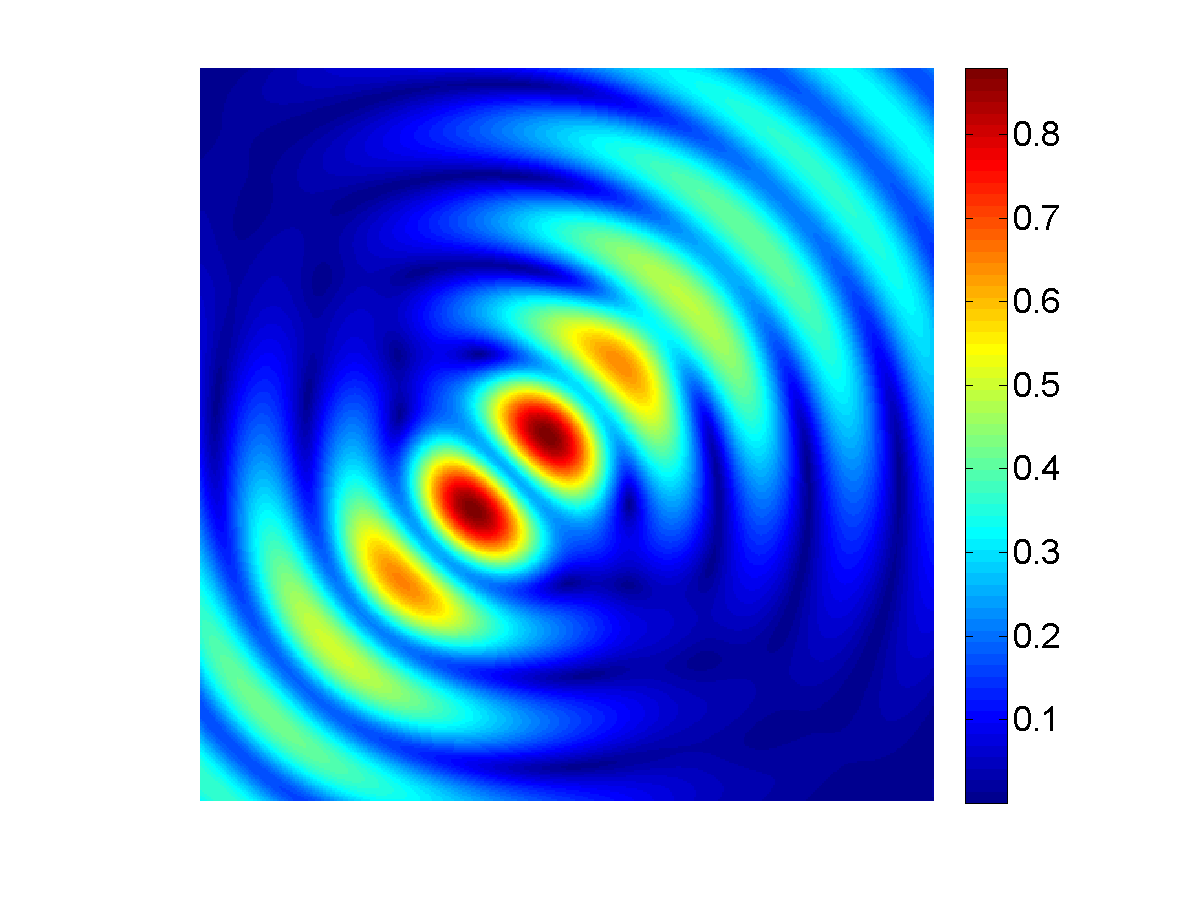}
    & \includegraphics[trim = 3.5cm 1cm 2cm 1cm, clip=true,width=3.7cm]{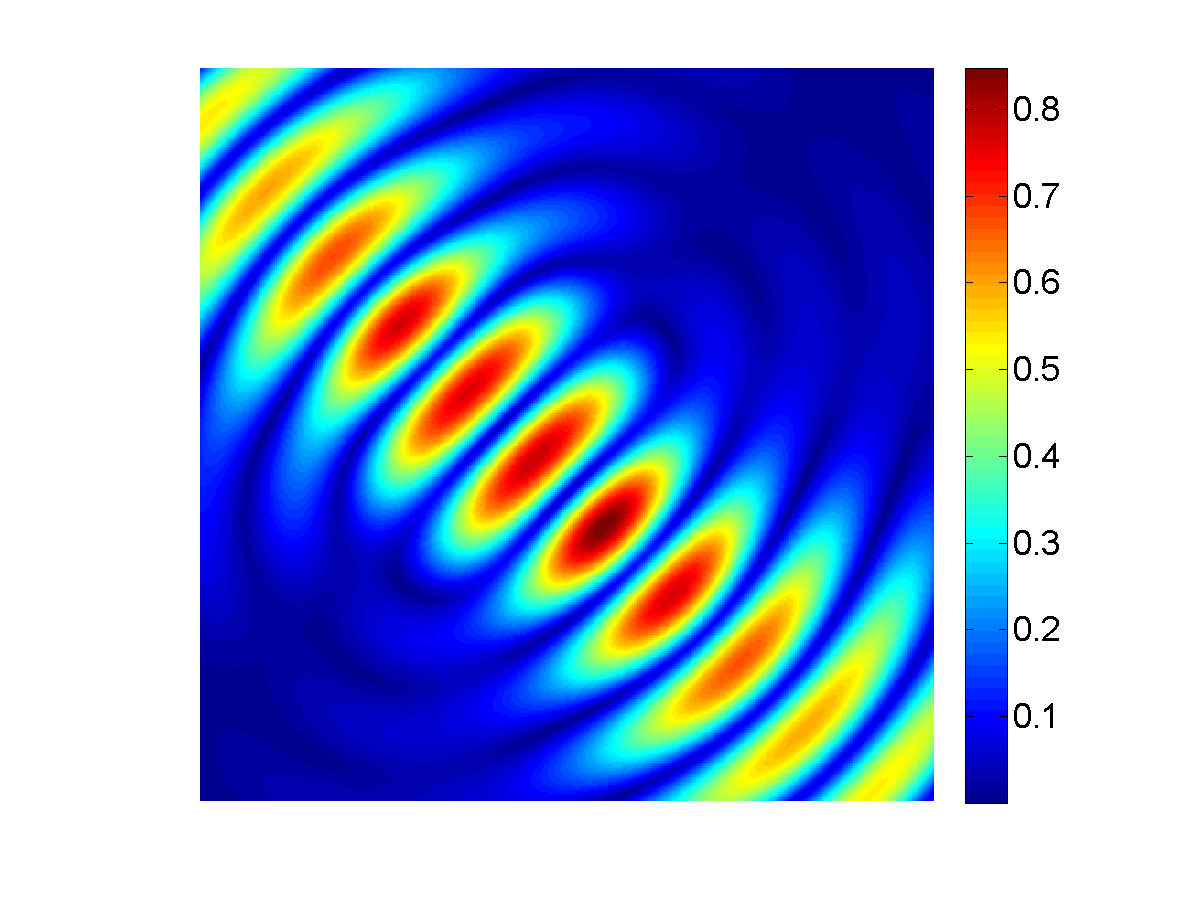} & \includegraphics[trim = 3.5cm 1cm 2cm 1cm, clip=true,width=3.7cm]{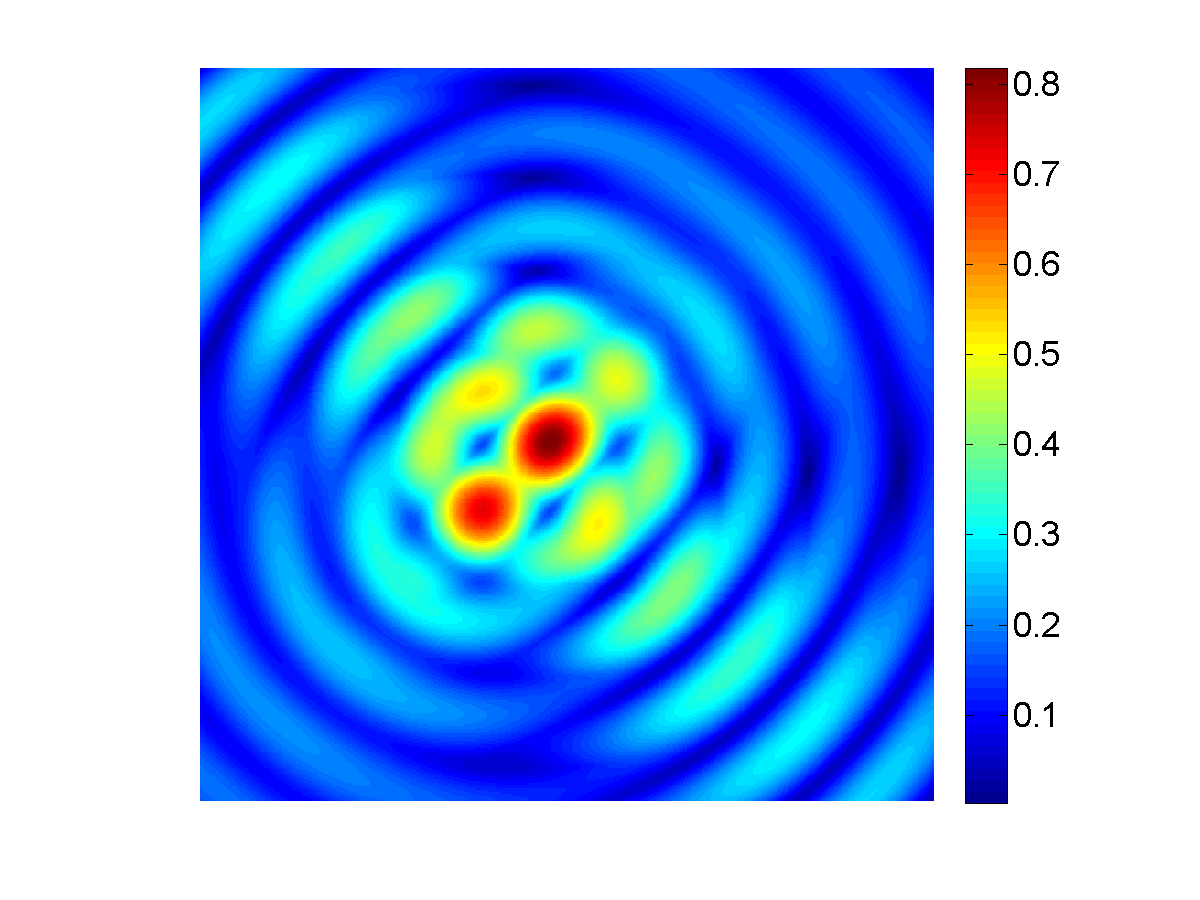}\\
    \includegraphics[trim = 3.5cm 1cm 2cm 1cm, clip=true,width=3.7cm]{ex3_rec_true.png} & \includegraphics[trim = 3.5cm 1cm 2cm 1cm, clip=true,width=3.7cm]{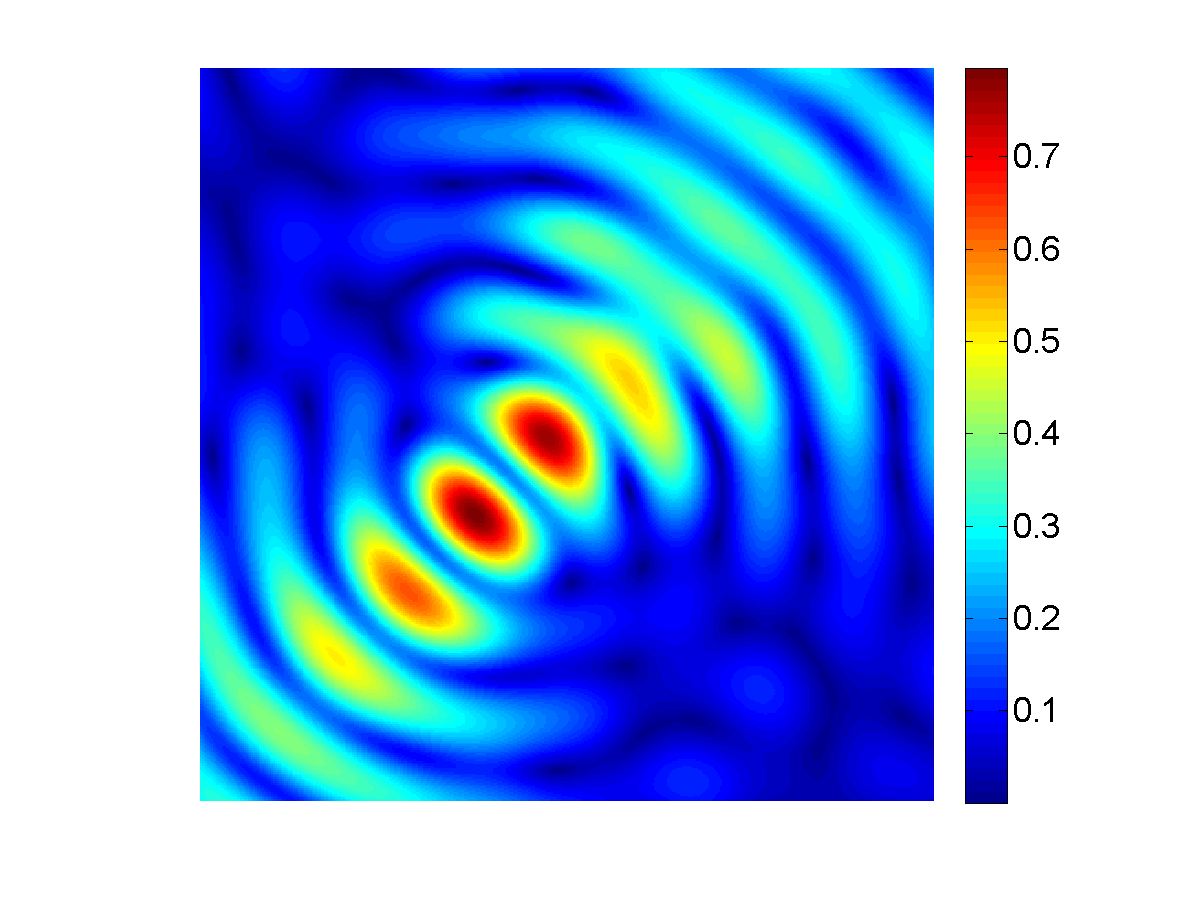}
    & \includegraphics[trim = 3.5cm 1cm 2cm 1cm, clip=true,width=3.7cm]{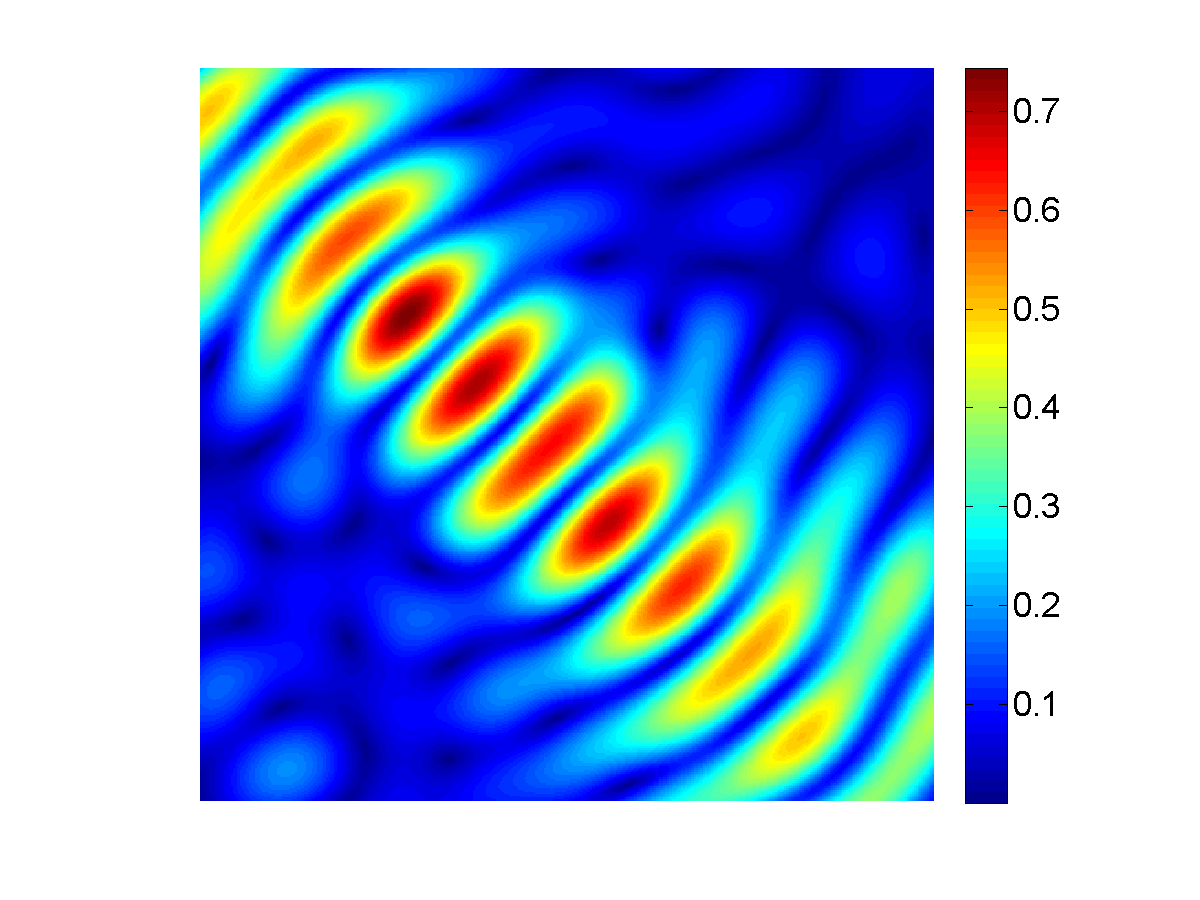} & \includegraphics[trim = 3.5cm 1cm 2cm 1cm, clip=true,width=3.7cm]{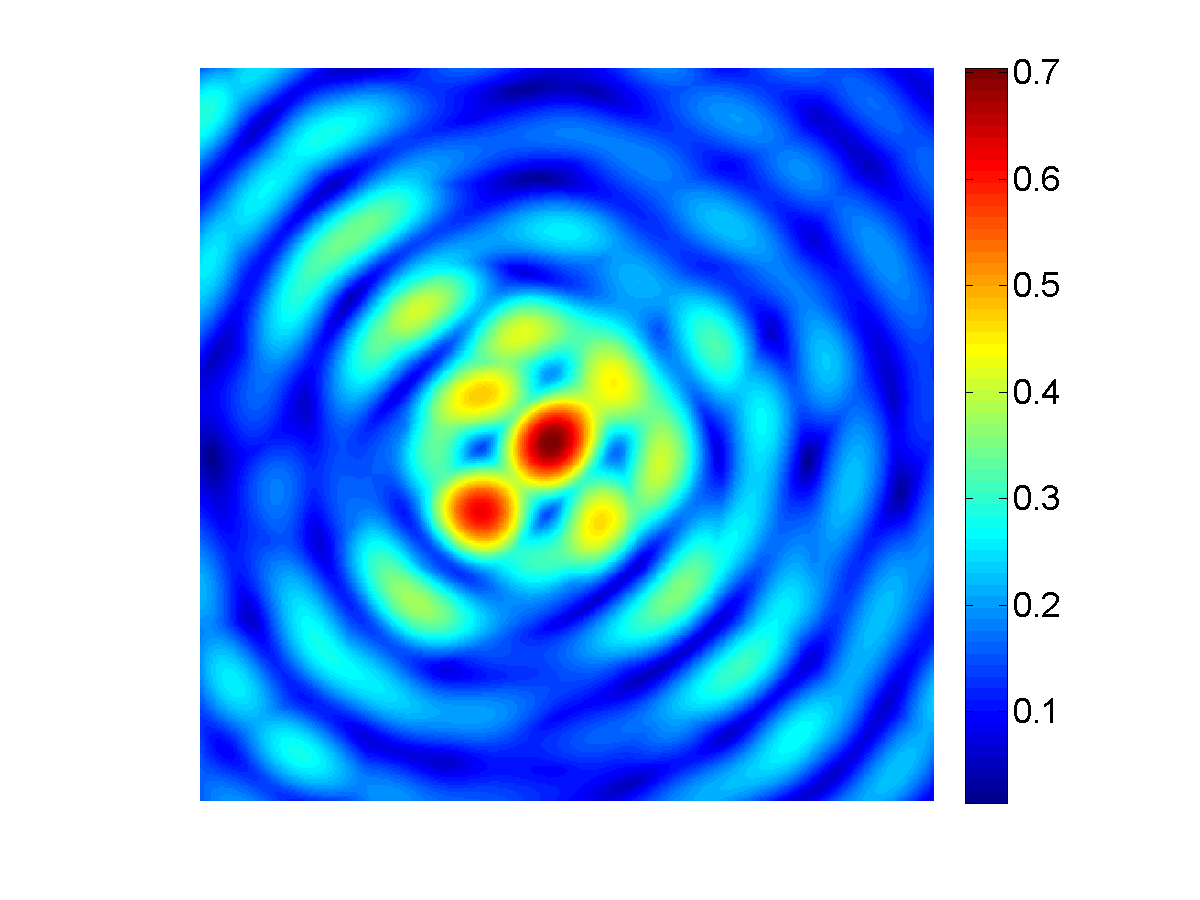}\\
    (a) true scatterers & (b) index $\Psi(x_p;p_1)$ & (c) index $\Psi(x_p;p_2)$ & (d) index $\Psi_c$
  \end{tabular}
  \caption{Numerical results for Example \ref{exam:2sc}(b). The first and
  second row refer to index functions for the exact data and the noisy
  data with $\epsilon=20\%$ noise.}\label{fig:2scb}
\end{figure}

The two scatterers in Example \ref{exam:2sc}(b) stay very close to each other.  We observe that,
for the incident direction $d_2$, apart from the strong resonances, the locations for the
scatterers cannot be directly inferred since the index function $\Psi$ relates the
two scatterers into an elongated ellipse shape.
Nonetheless, the resonances were almost completely removed from the estimate when using two incident fields.
Consequently, the estimate of the locations of the scatterers is very impressive:
the two scatterers are still well separated despite their closeness, with their locations correctly estimated,
for up to $\epsilon=20\%$ noise in the data.
Although not presented, we would like to remark that in the case of very high noise
levels, e.g., $\epsilon=40\%$, the estimate
tends to connect the two scatterers, and also some spurious modes have emerged.

Our next example considers the more challenging case of three neighboring scatterers.
\begin{exam}\label{exam:3sc}
This example consists of three neighboring square scatterers of width $0.15$: one
located at $(-\frac{5}{8},-\frac{5}{8})$, one located at $(-\frac{17}{40},-\frac{17}{40})$, and one located at $(-\frac{21}{40},\frac{1}{8})$. The inhomogeneity
coefficients of all three scatterers are set to be $\eta=1$.
\end{exam}

\begin{figure}[h]
  \centering
  \begin{tabular}{cccc}
    \includegraphics[trim = 3.5cm 1cm 2cm 1cm, clip=true,width=3.7cm]{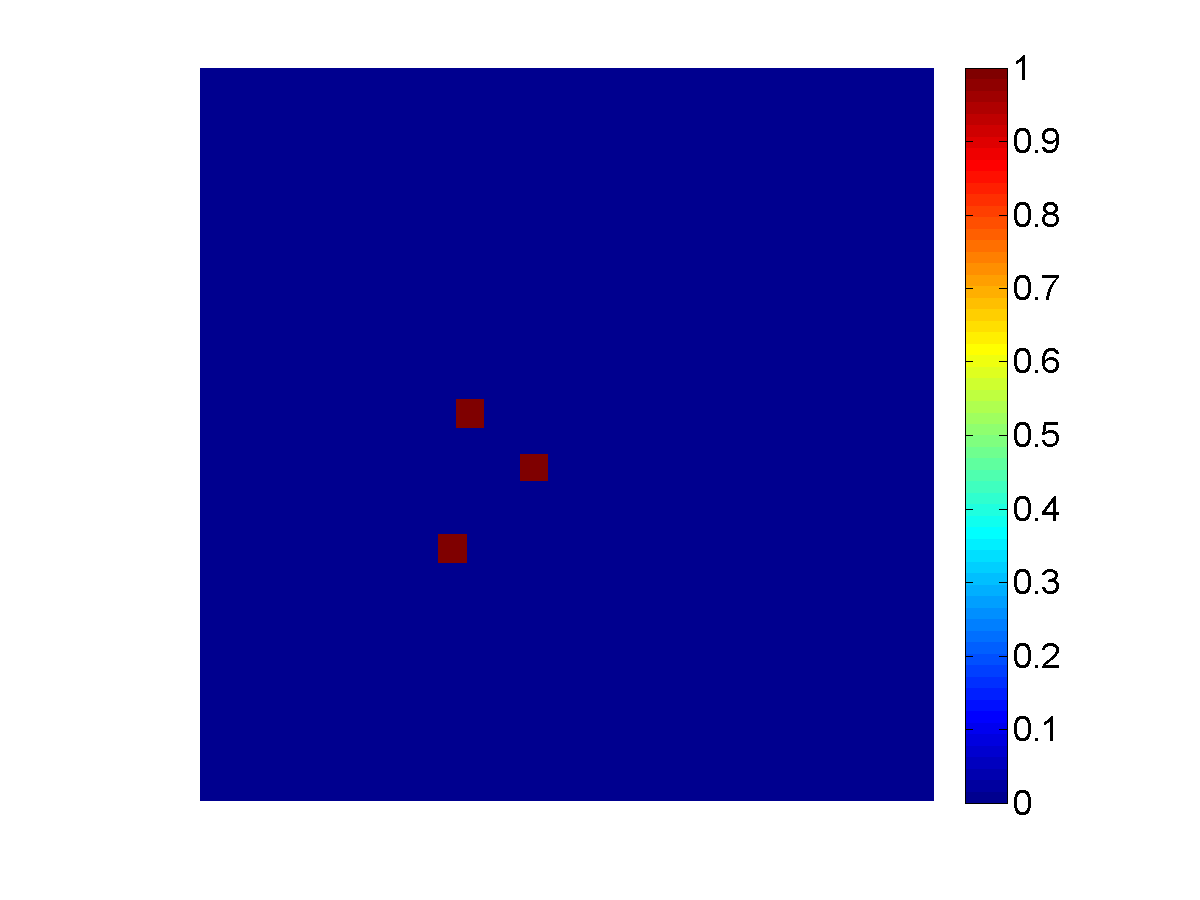} & \includegraphics[trim = 3.5cm 1cm 2cm 1cm, clip=true,width=3.7cm]{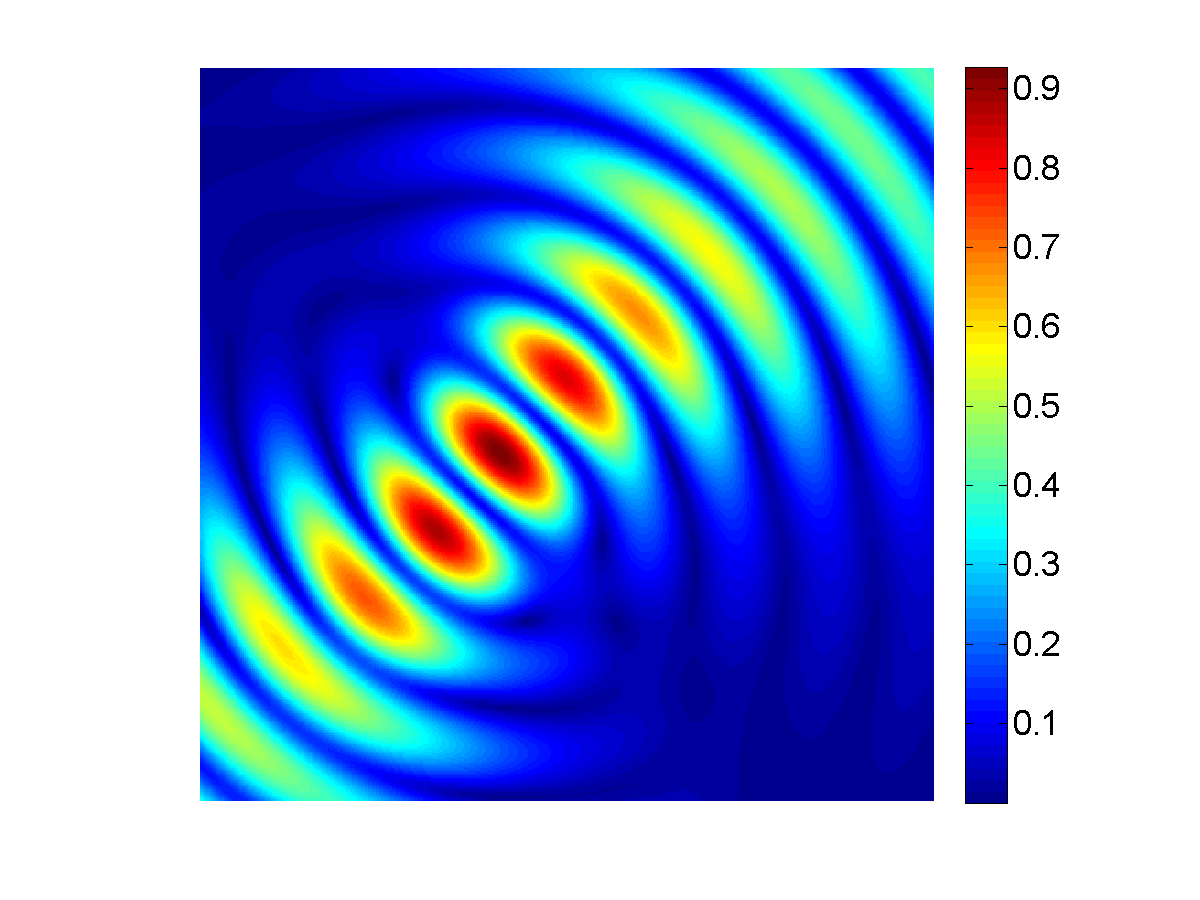}
    & \includegraphics[trim = 3.5cm 1cm 2cm 1cm, clip=true,width=3.7cm]{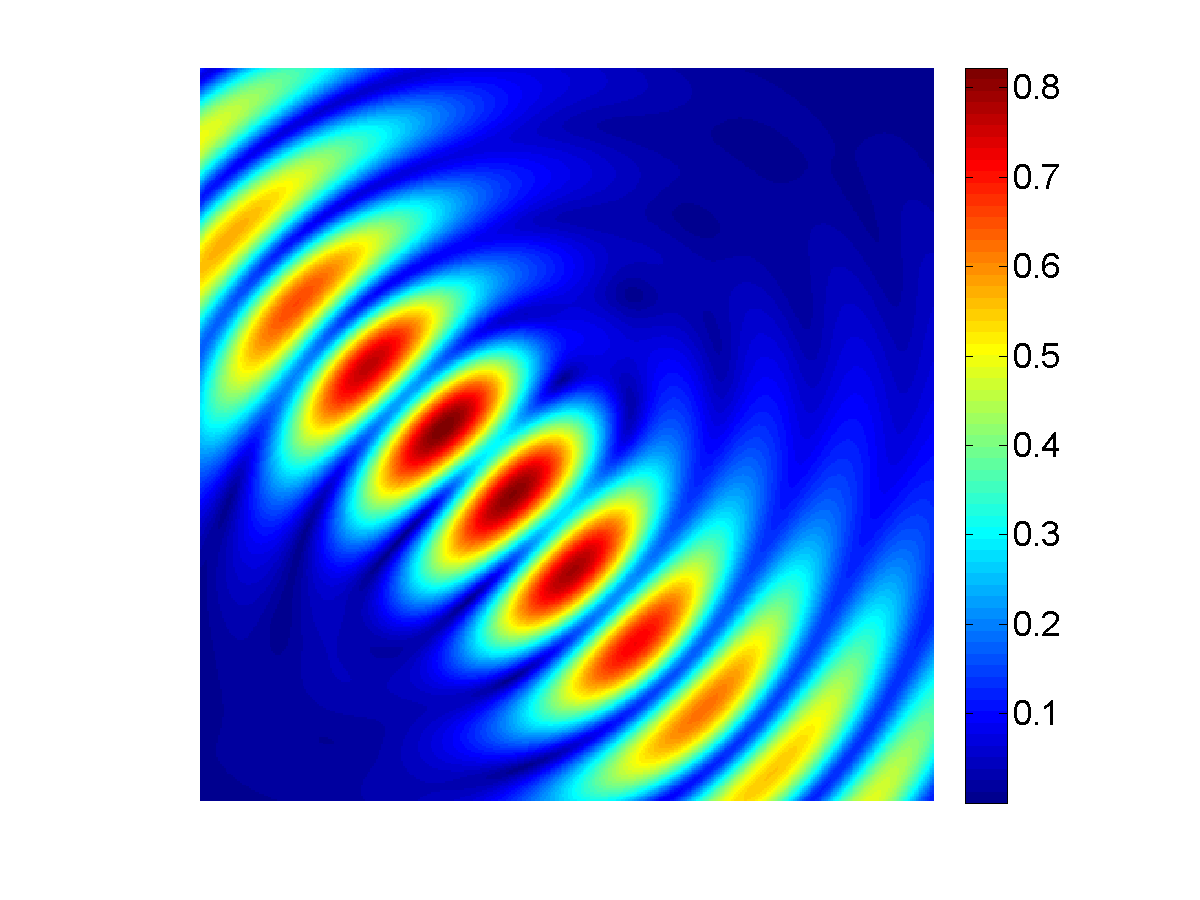} & \includegraphics[trim = 3.5cm 1cm 2cm 1cm, clip=true,width=3.7cm]{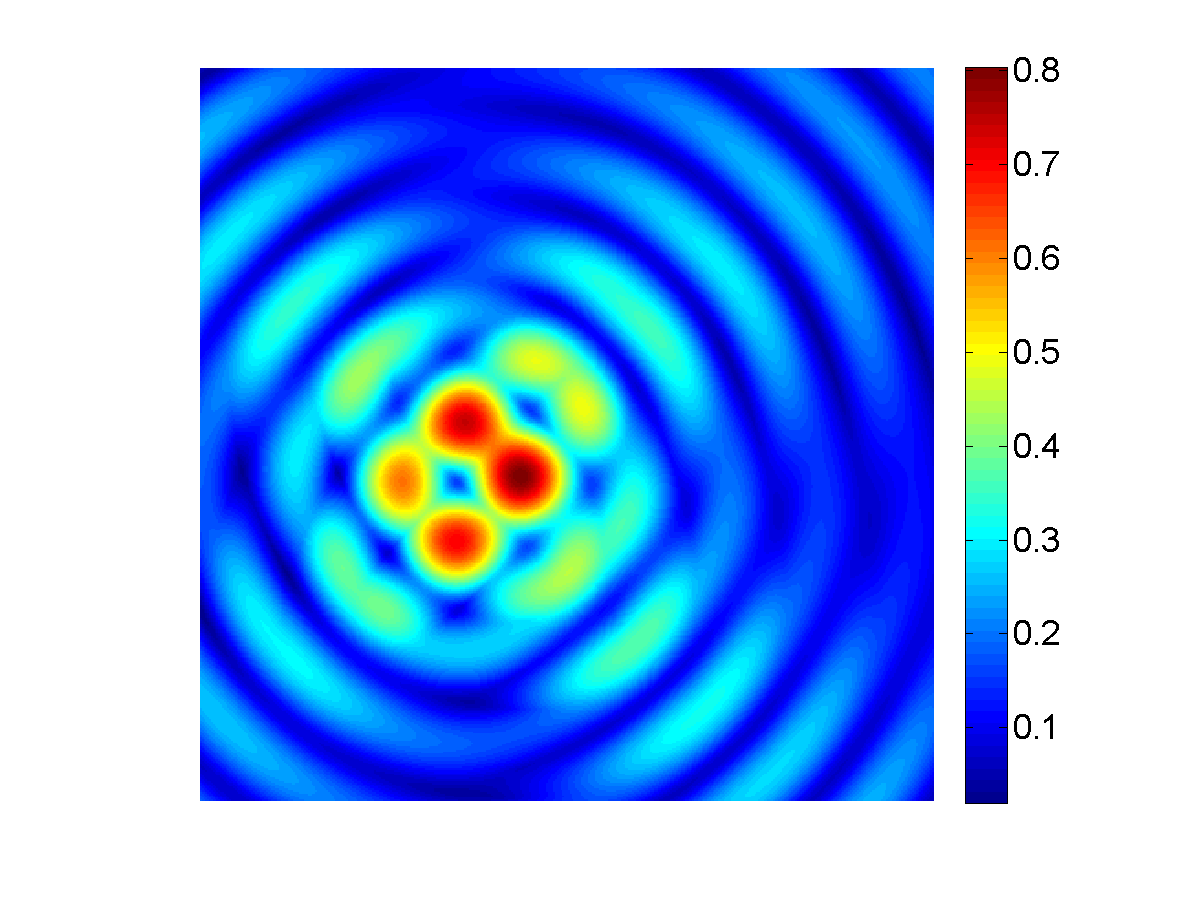}\\
    \includegraphics[trim = 3.5cm 1cm 2cm 1cm, clip=true,width=3.7cm]{ex5_rec_true.png} & \includegraphics[trim = 3.5cm 1cm 2cm 1cm, clip=true,width=3.7cm]{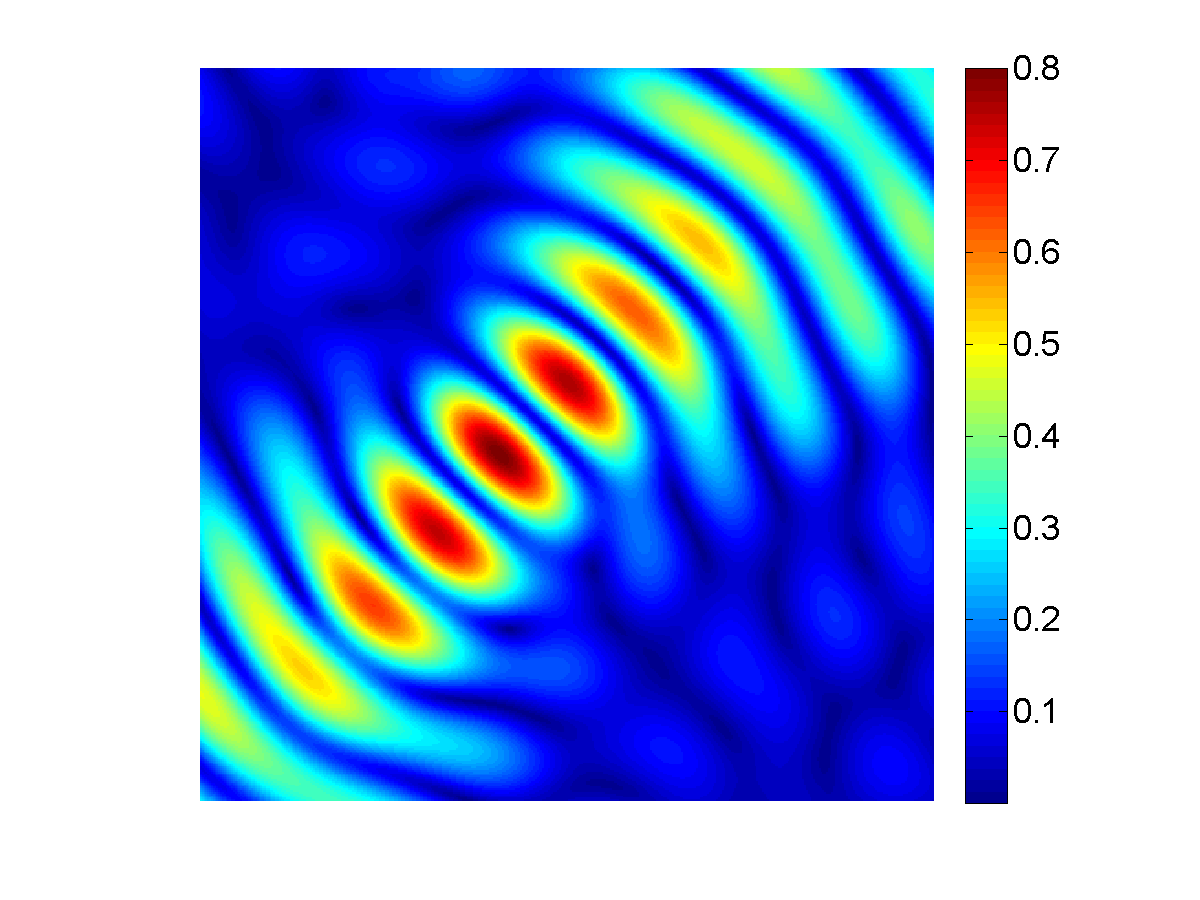}
    & \includegraphics[trim = 3.5cm 1cm 2cm 1cm, clip=true,width=3.7cm]{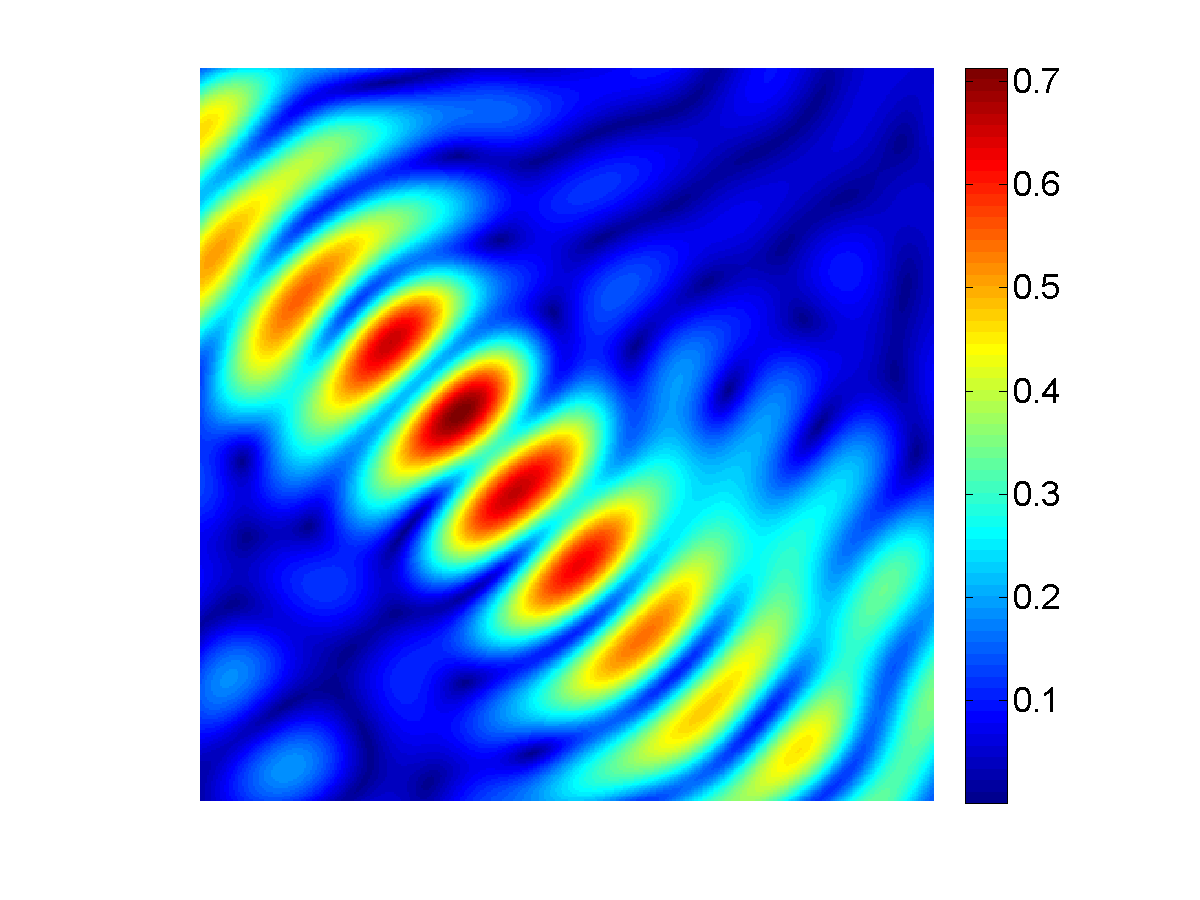} & \includegraphics[trim = 3.5cm 1cm 2cm 1cm, clip=true,width=3.7cm]{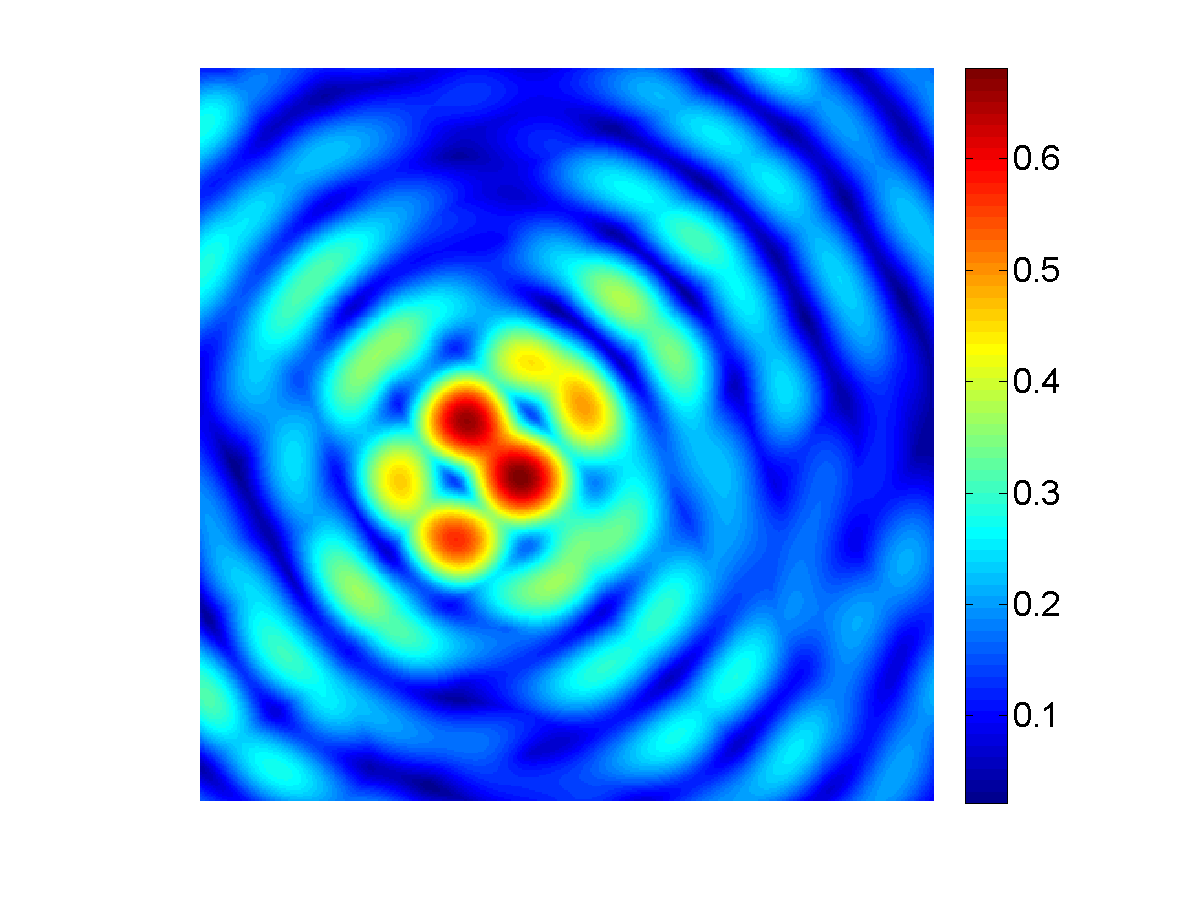}\\
       (a) true scatterers & (b) index $\Psi(x_p;p_1)$ & (c) index $\Psi(x_p;p_2)$ & (d) index $\Psi_c$
  \end{tabular}
  \caption{Numerical results for Example \ref{exam:3sc}. The first and
  second row refer to index functions for the exact data and the noisy
  data with $\epsilon=20\%$ noise.} \label{fig:3sc}
\end{figure}

In this example, the three scatterers stay very close to each other, especially the
upper two, thus it is numerically very challenging to separate them. This is also
reflected in the fact that each individual incident field tends to combine two of the three
scatterers into one larger chunk, which is true
for both the exact data and noisy data; see Figs.\,\ref{fig:3sc} (b)-(c). Therefore,
it is difficult to tell from either Fig.\,\ref{fig:3sc}(b) or Fig.\,\ref{fig:3sc}(c) the number and locations
of the scatterers. The latter is effectively remedied by using two incident fields together;
see Fig.\,\ref{fig:3sc}, where the three scatterers are vividly separately from each
other. However, the interactions between the scatterers focus the
strength on the scatterer to the right, and diminish slightly
the strength of the scatterer to the left.

Next we consider a ring-shaped scatterer.
\begin{exam}\label{exam:ring}
This scatterer is one ring-shaped square scatterer located at the origin, with the
outer and inner side lengths being $0.6$ and $0.4$, respectively. The coefficient $\eta$
of the scatterer is $1$.
\end{exam}

The ring-shaped scatterer represents one of the most challenging objects to resolve, and it
is highly nontrivial even with multiple data sets. The results with the exact data and $\epsilon=20\%$
noise in the data are shown in Fig.\,\ref{fig:ring}. It is observed that with just two incident waves, the
method can provide a quite reasonable estimate of the ring shape, and it remains very  stable
for up to $\epsilon=20\%$ noise in the data.

\begin{figure}[h!]
  \centering
  \begin{tabular}{cccc}
    \includegraphics[trim = 3.5cm 1cm 2cm 1cm, clip=true,width=3.7cm]{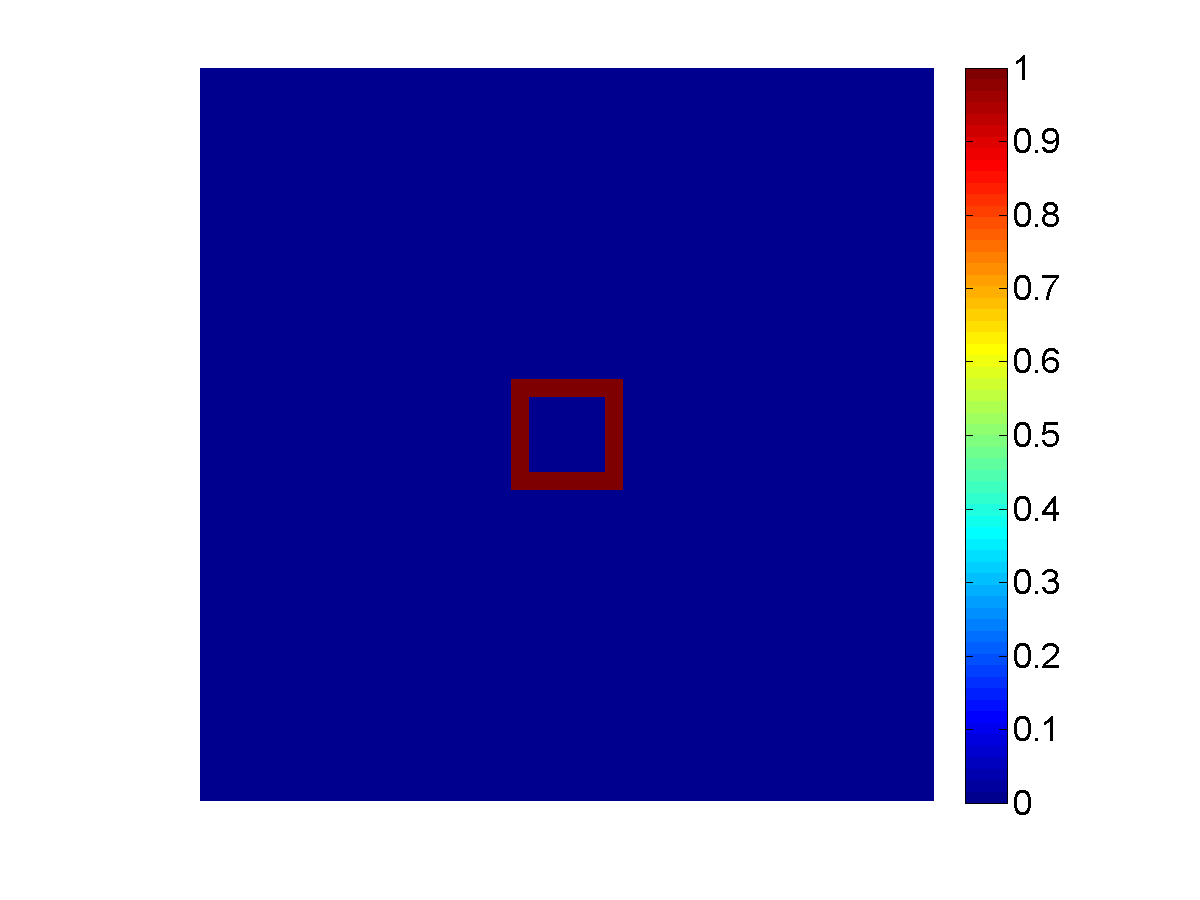} & \includegraphics[trim = 3.5cm 1cm 2cm 1cm, clip=true,width=3.7cm]{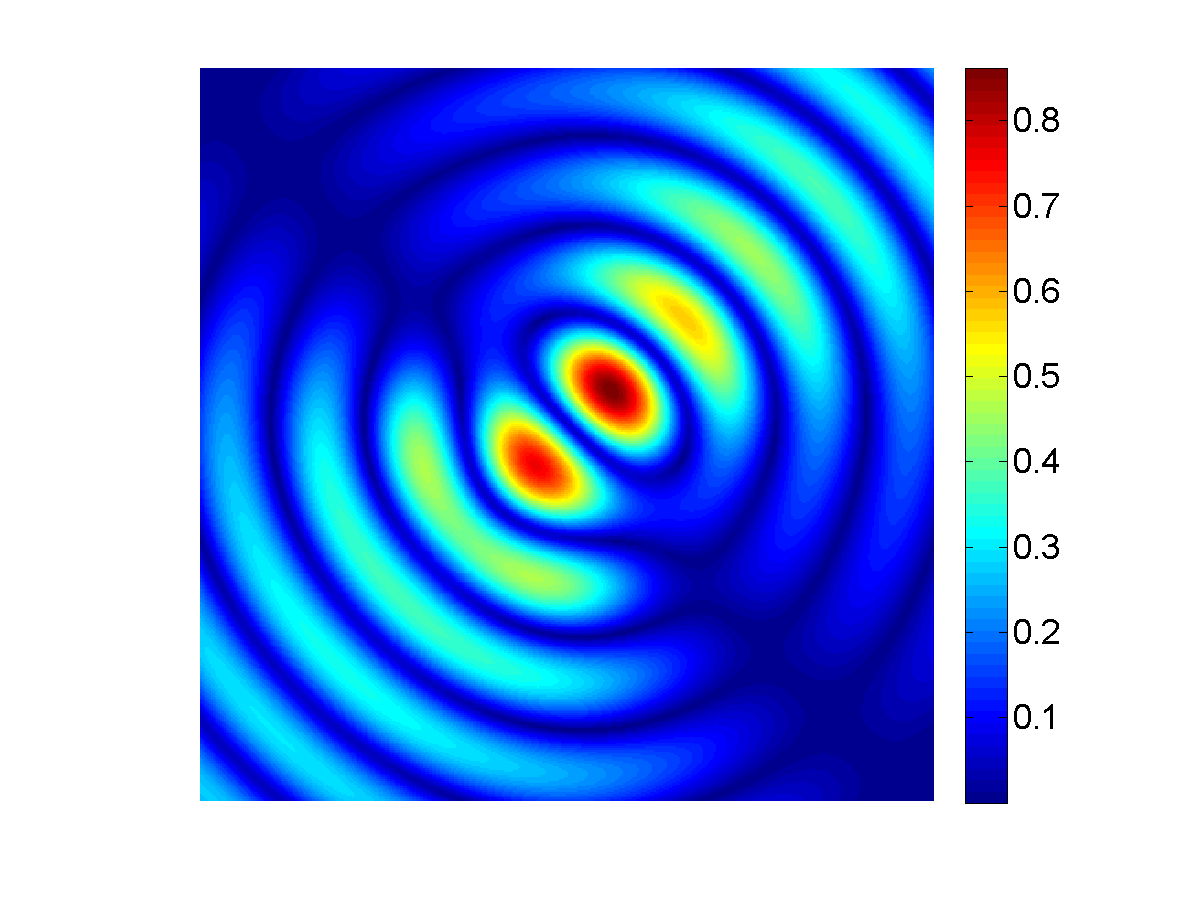}
    & \includegraphics[trim = 3.5cm 1cm 2cm 1cm, clip=true,width=3.7cm]{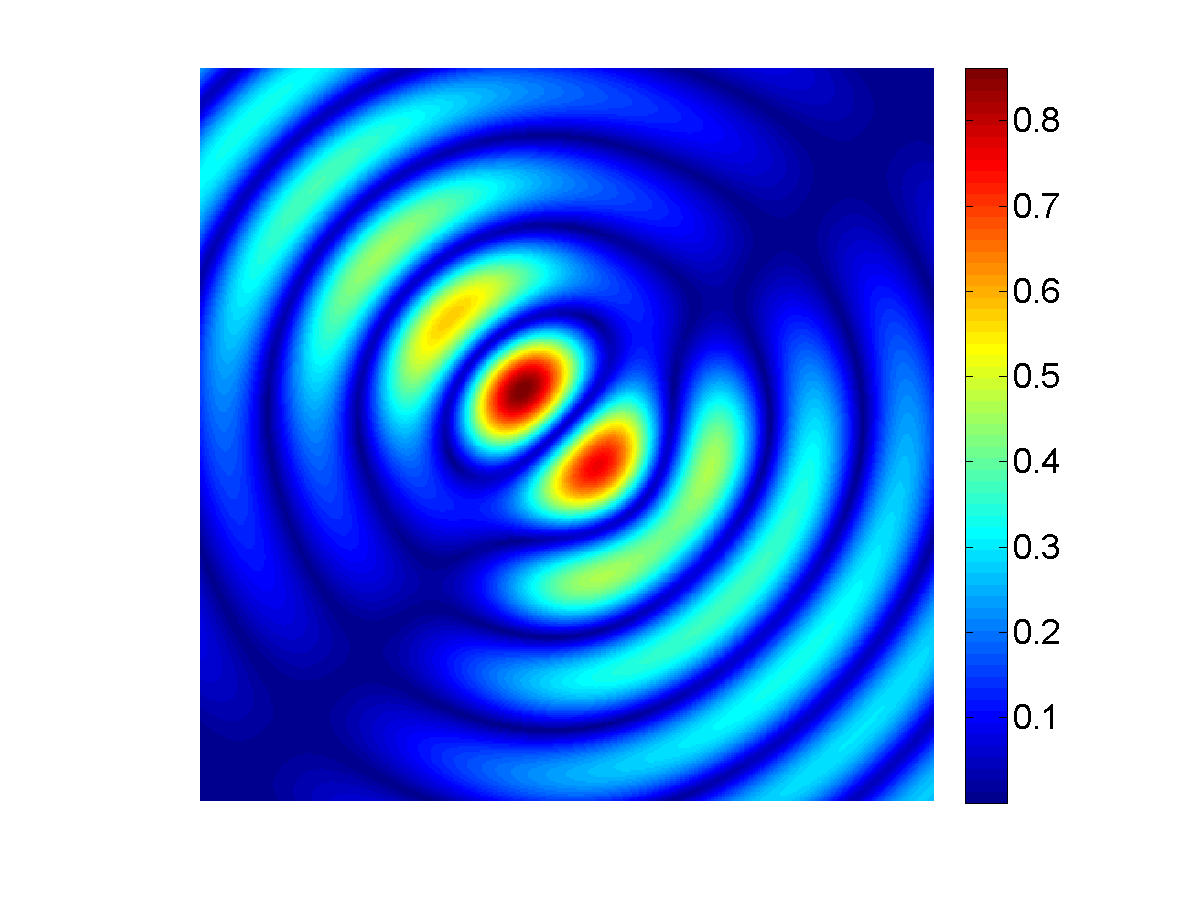} & \includegraphics[trim = 3.5cm 1cm 2cm 1cm, clip=true,width=3.7cm]{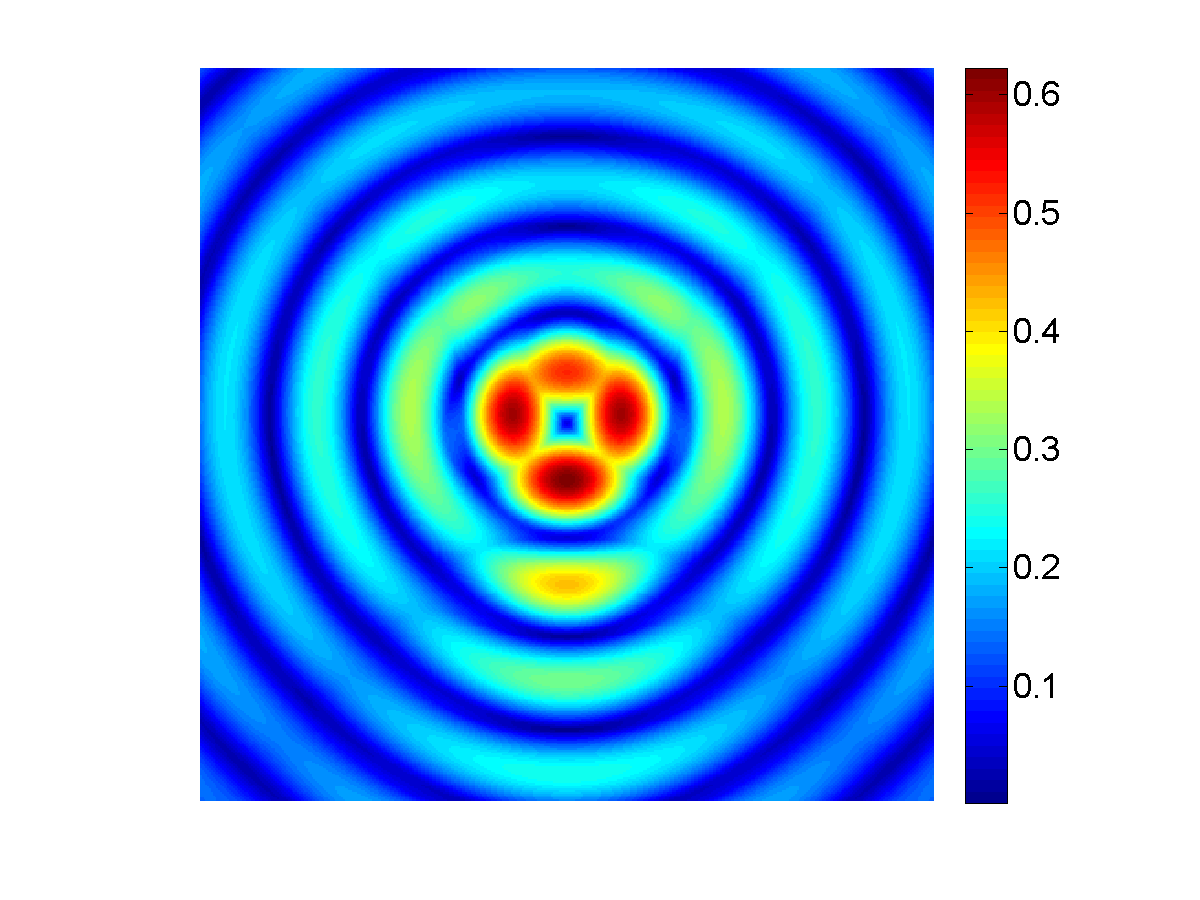}\\
    \includegraphics[trim = 3.5cm 1cm 2cm 1cm, clip=true,width=3.7cm]{ex4_rec_true.png} & \includegraphics[trim = 3.5cm 1cm 2cm 1cm, clip=true,width=3.7cm]{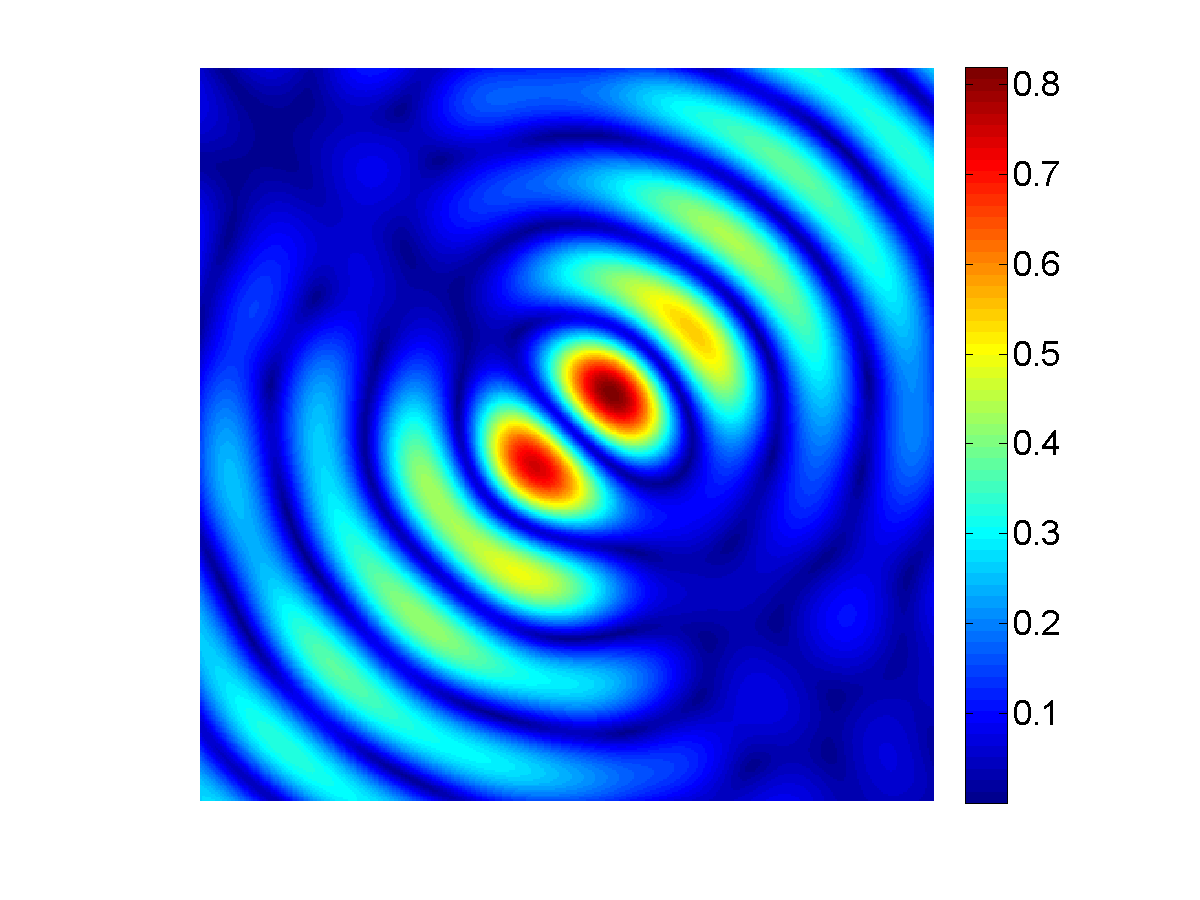}
    & \includegraphics[trim = 3.5cm 1cm 2cm 1cm, clip=true,width=3.7cm]{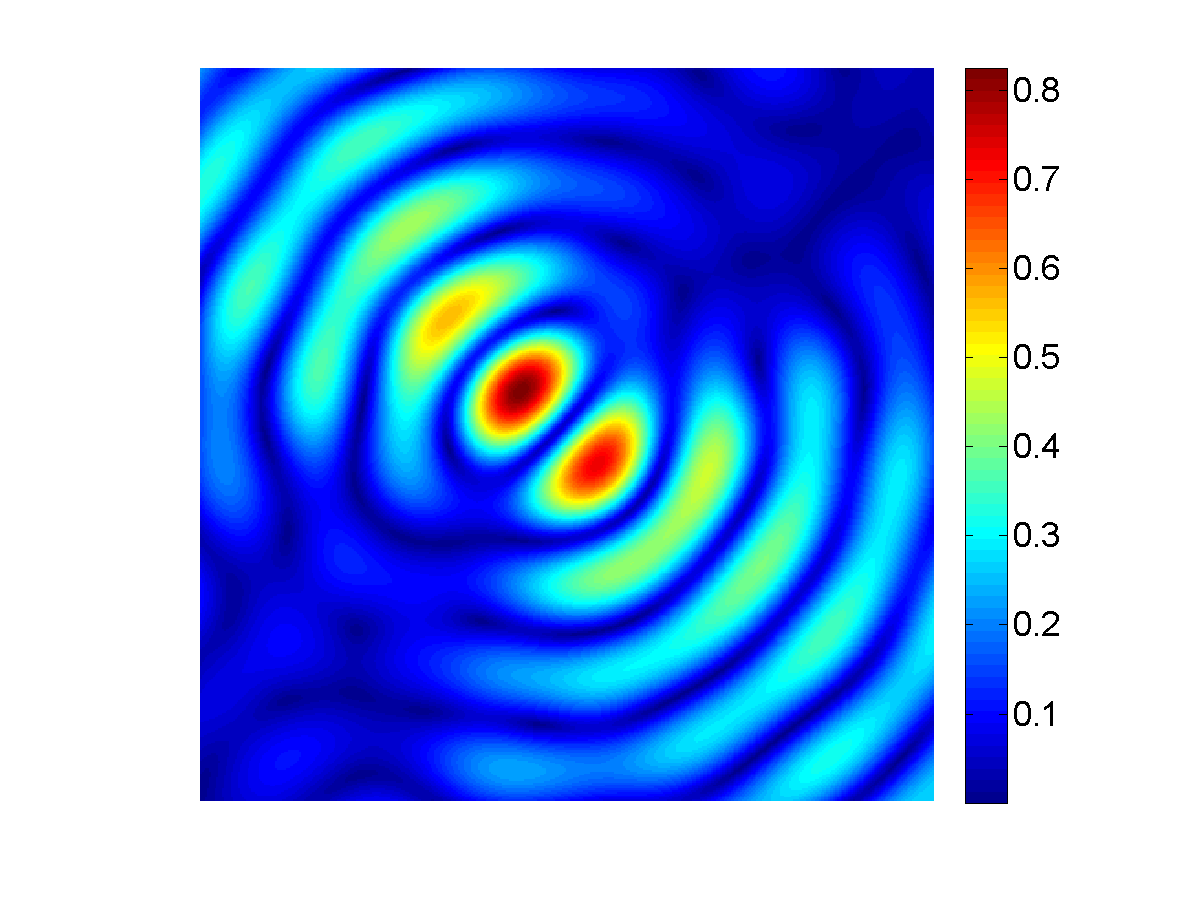} & \includegraphics[trim = 3.5cm 1cm 2cm 1cm, clip=true,width=3.7cm]{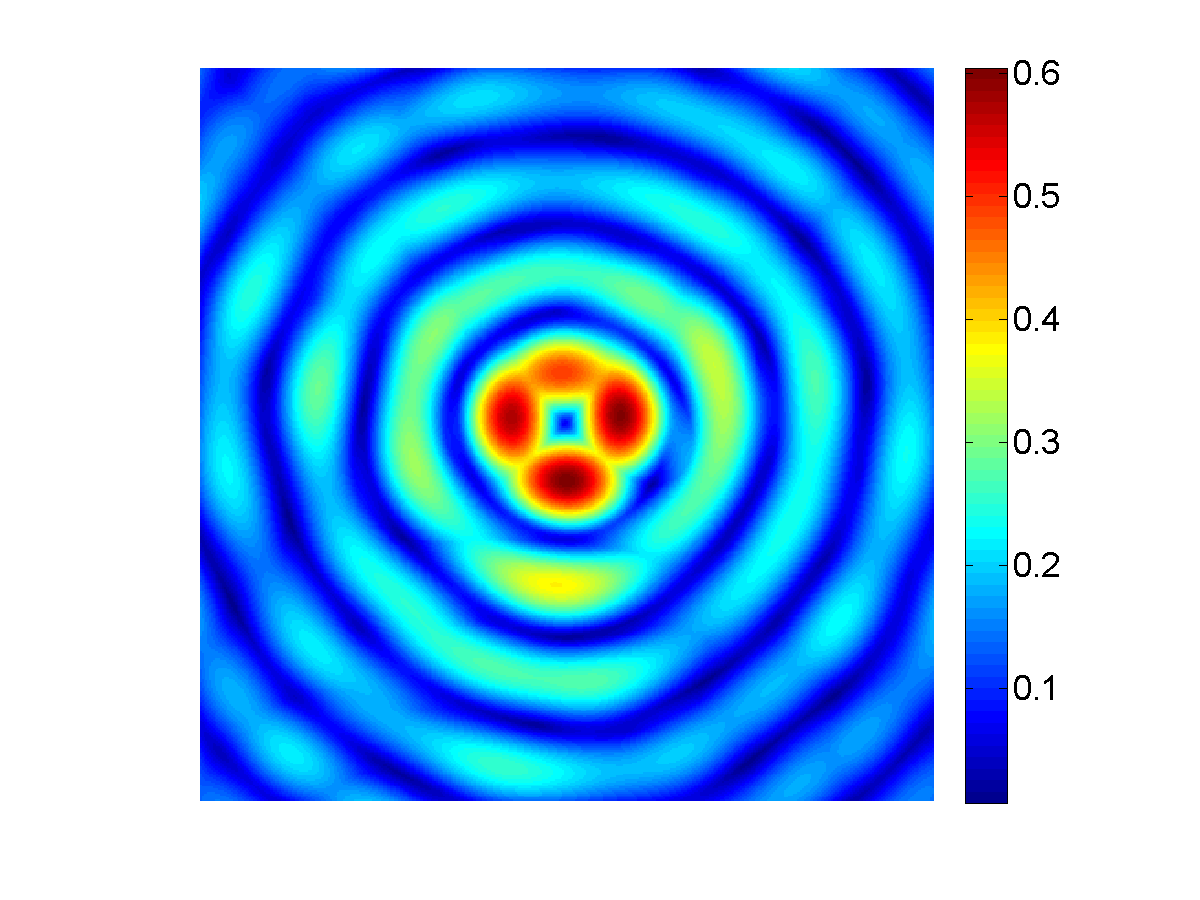}\\
        (a) true scatterers & (b) index $\Psi(x_p;p_1)$ & (c) index $\Psi(x_p;p_2)$ & (d) index $\Psi_c$
  \end{tabular}
  \caption{Numerical results for Example \ref{exam:ring}. The first and
  second row refers to index functions for the exact data and the noisy
  data with $\epsilon=20\%$ noise.} \label{fig:ring}
\end{figure}


\subsection{Three-dimensional example}
The last example shows the feasibility of the method for three-dimensional problems.
\begin{exam}\label{exam:3d}
We consider two cubic scatterers with side length $0.2$: one centered at $(0.4,0.3,0.3)$ and the other at
$(-0.4,0.3,0.3)$, respectively, and the coefficient
$\eta$ in both scatterers is taken to be $1$.
\end{exam}

For this example, we take two incident fields, with the incident directions $d_1=d_2=\frac{1}{\sqrt{3}}
(1,1,1)^\mathrm{t}$ and the polarization vectors
$p_1=\frac{1}{\sqrt{6}}(1,-2,1)^\mathrm{t}$ and $p_2=\frac{1}{\sqrt{6}}(1,1,-2)^\mathrm{t}$.
The scattered field $E^s$ is measured at the points on a uniformly distributed mesh
of $10\times 10$ on each face of the cube of edge length $10$.
The sampling domain $\widetilde{\Omega}$
for evaluating the index functions is taken to be $[-2,2]^3$. The problem is discretized with a mesh size
$0.02$. The numerical results are shown in Fig.\,\ref{fig:3de}. We observe that the support estimated by
the index $\Psi$ agrees very well with the exact one, the magnitude of the index $\Psi$ decreases quickly
away from the boundary of the true scatterers.
The presence of $\epsilon=20\%$
data noise (cf.\,Fig.\,\ref{fig:3dn}), seems to cause no obvious deterioration of the accuracy of the
index $\Psi$ as compared with the exact data.  By examining
the cross-sectional images of the index, we found that
in comparison with the two-dimensional problems, the rippling phenomenon seems less
pronounced for this three-dimensional example.

\begin{figure}[h!]
  \centering
  \begin{tabular}{cccc}
    \includegraphics[width=6.5cm]{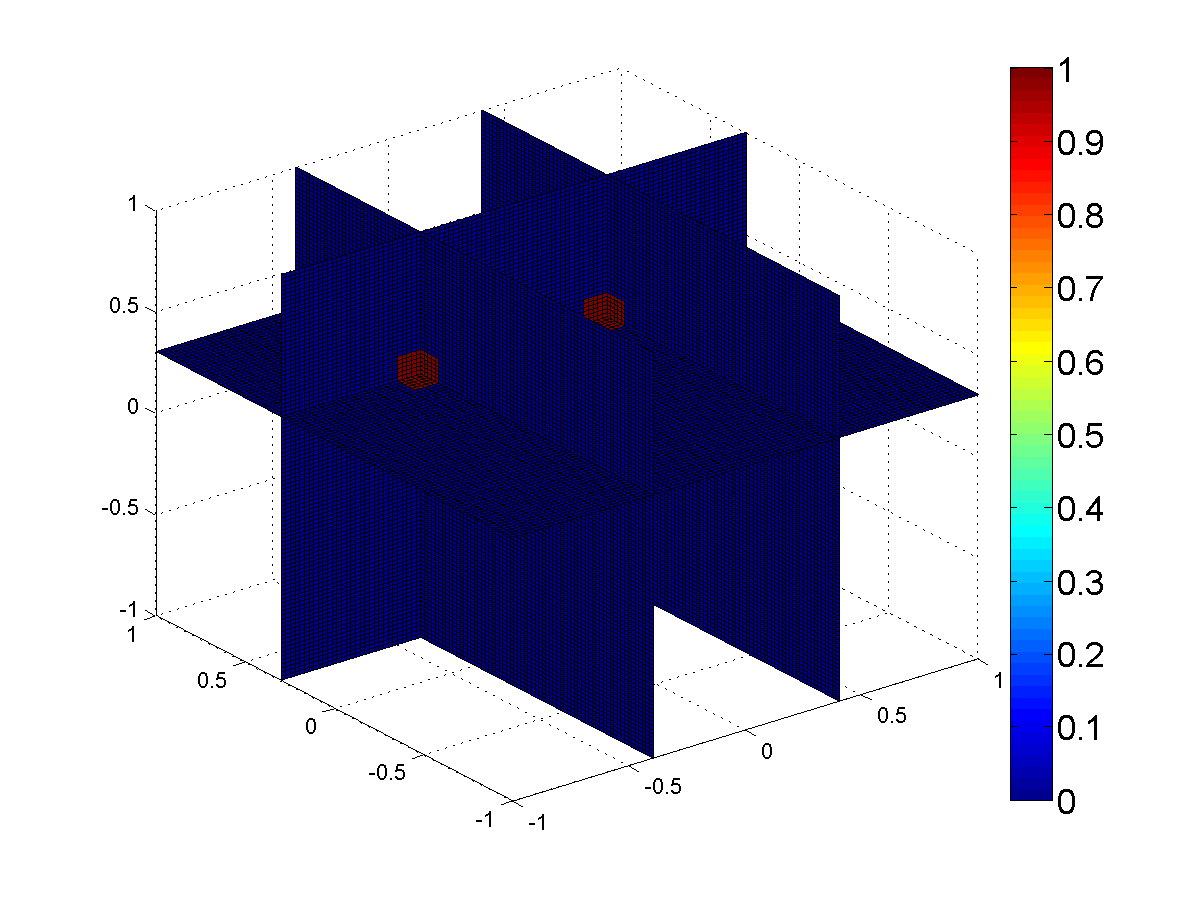} & \includegraphics[width=6.5cm]{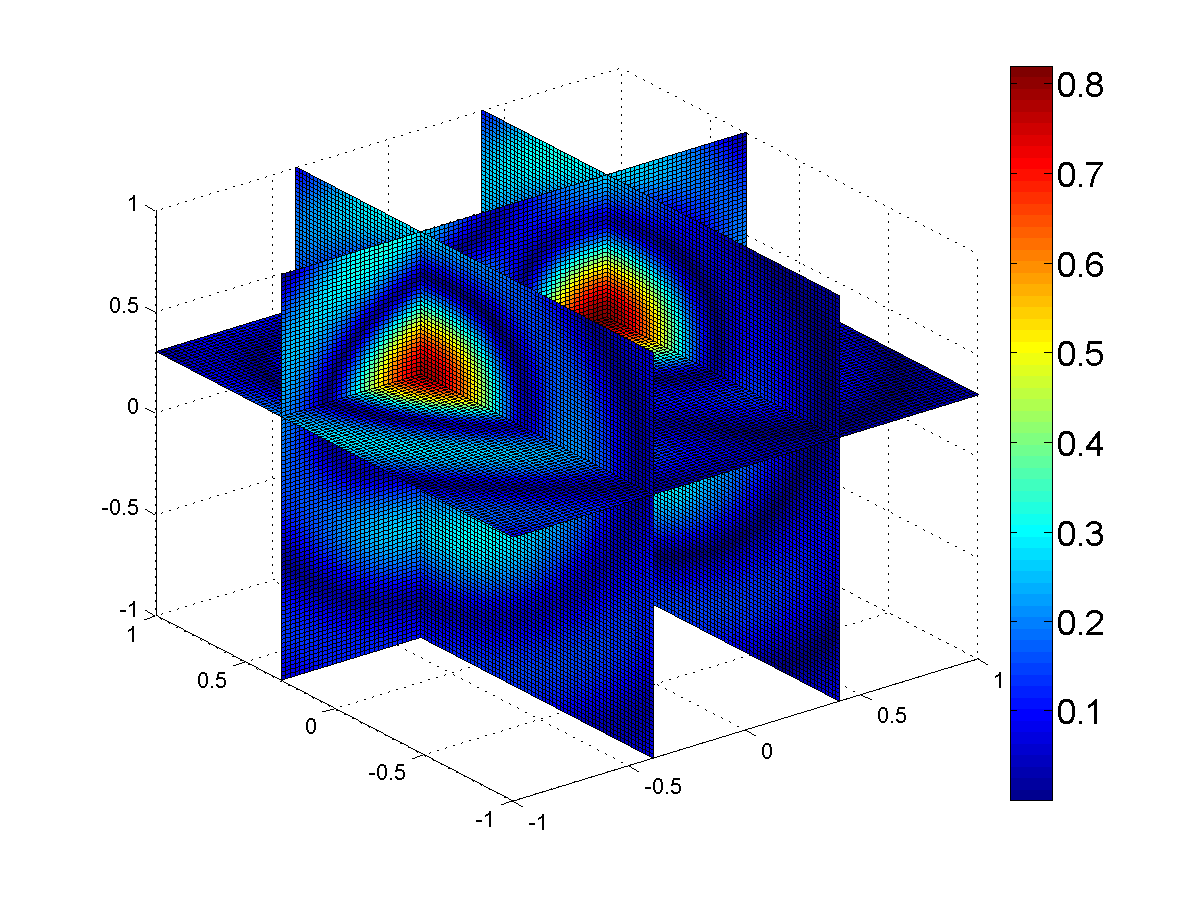}\\
    (a) exact scatterers & (b) index $\Psi(x_p;p_1)$\\
    \includegraphics[width=6.5cm]{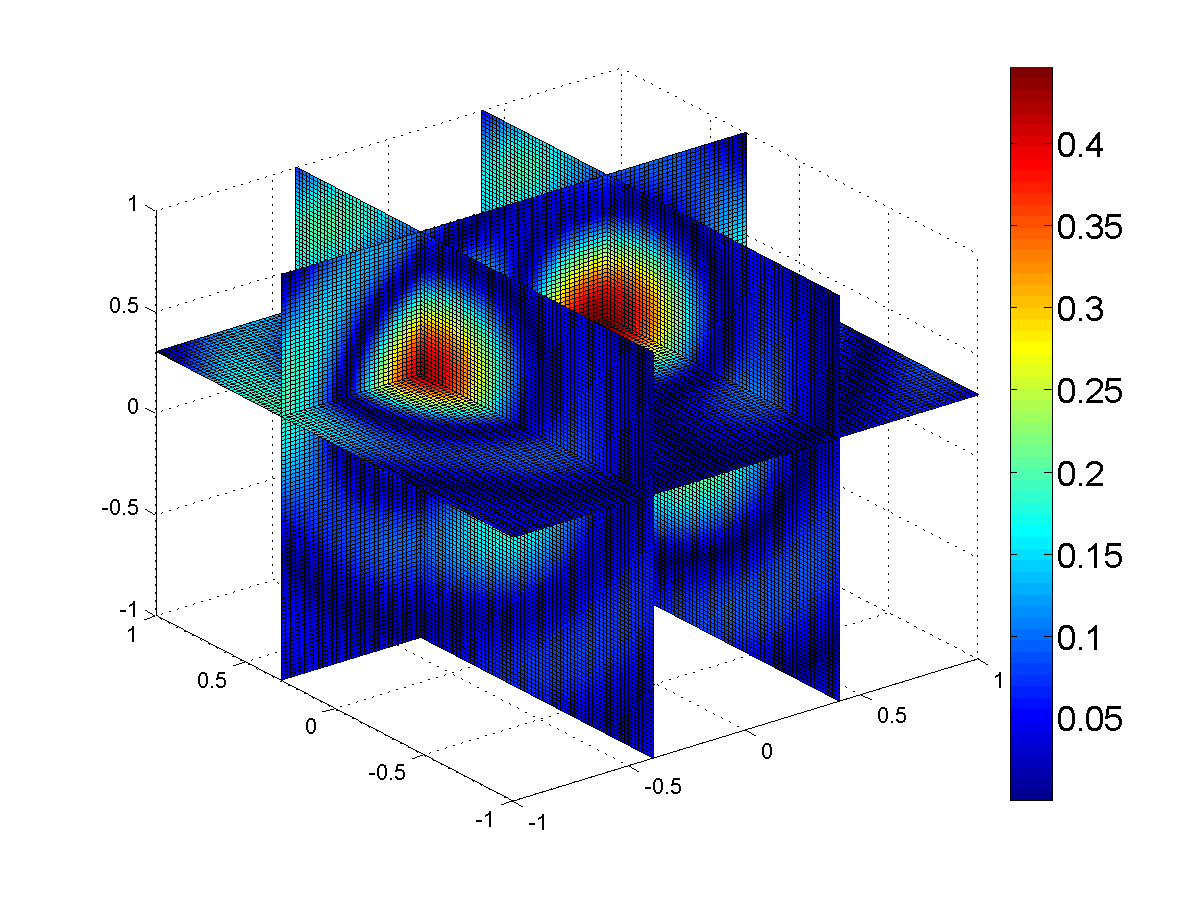} & \includegraphics[width=6.5cm]{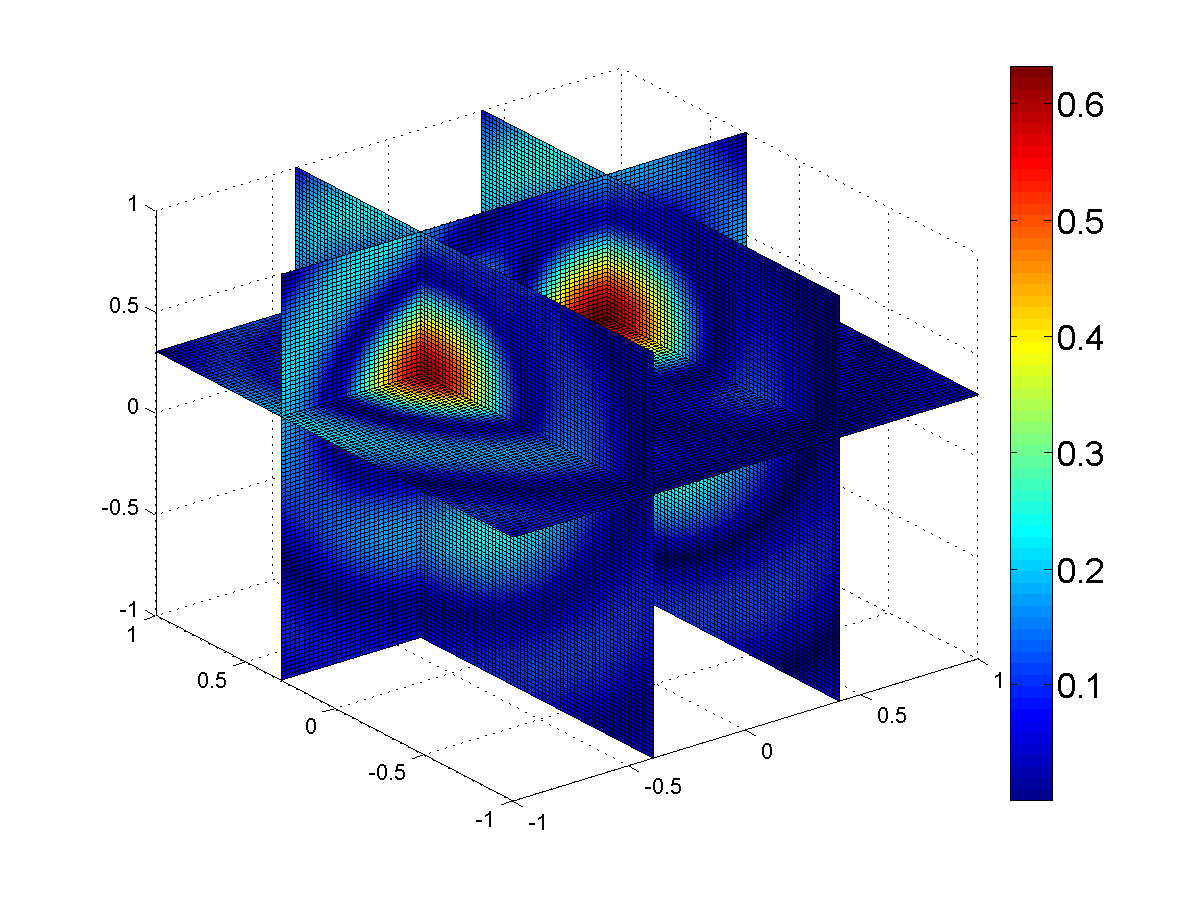}\\
    (c) index $\Psi(x_p;p_2)$ & (d) index $\Psi_c$
  \end{tabular}
  \caption{Numerical results for Example \ref{exam:3d} with exact data.}\label{fig:3de}
\end{figure}

\begin{figure}[h!]
  \centering
  \begin{tabular}{cccc}
    \includegraphics[width=6.5cm]{ex3d_rec_true.png} & \includegraphics[width=6.5cm]{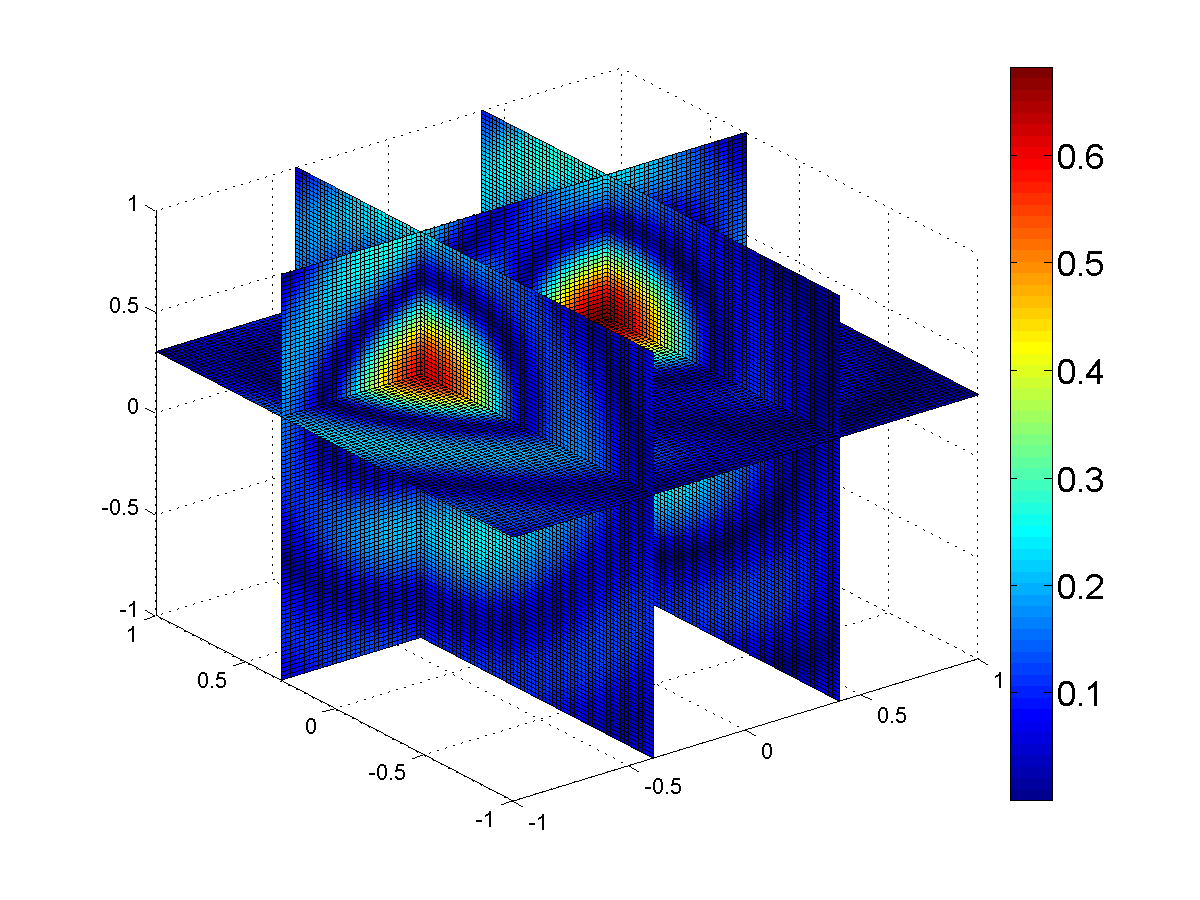}\\
    (a) exact scatterers & (b) index $\Psi(x_p;p_1)$\\
    \includegraphics[width=6.5cm]{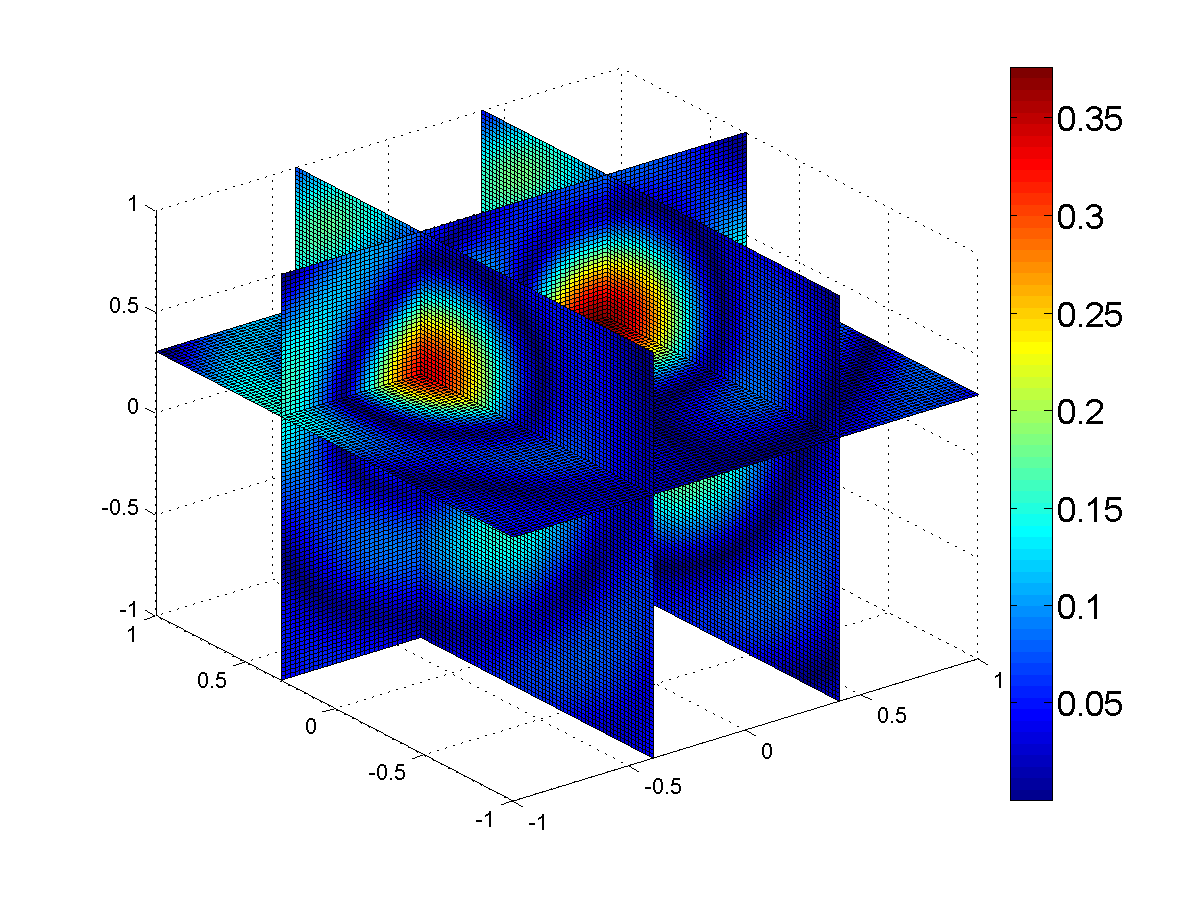} & \includegraphics[width=6.5cm]{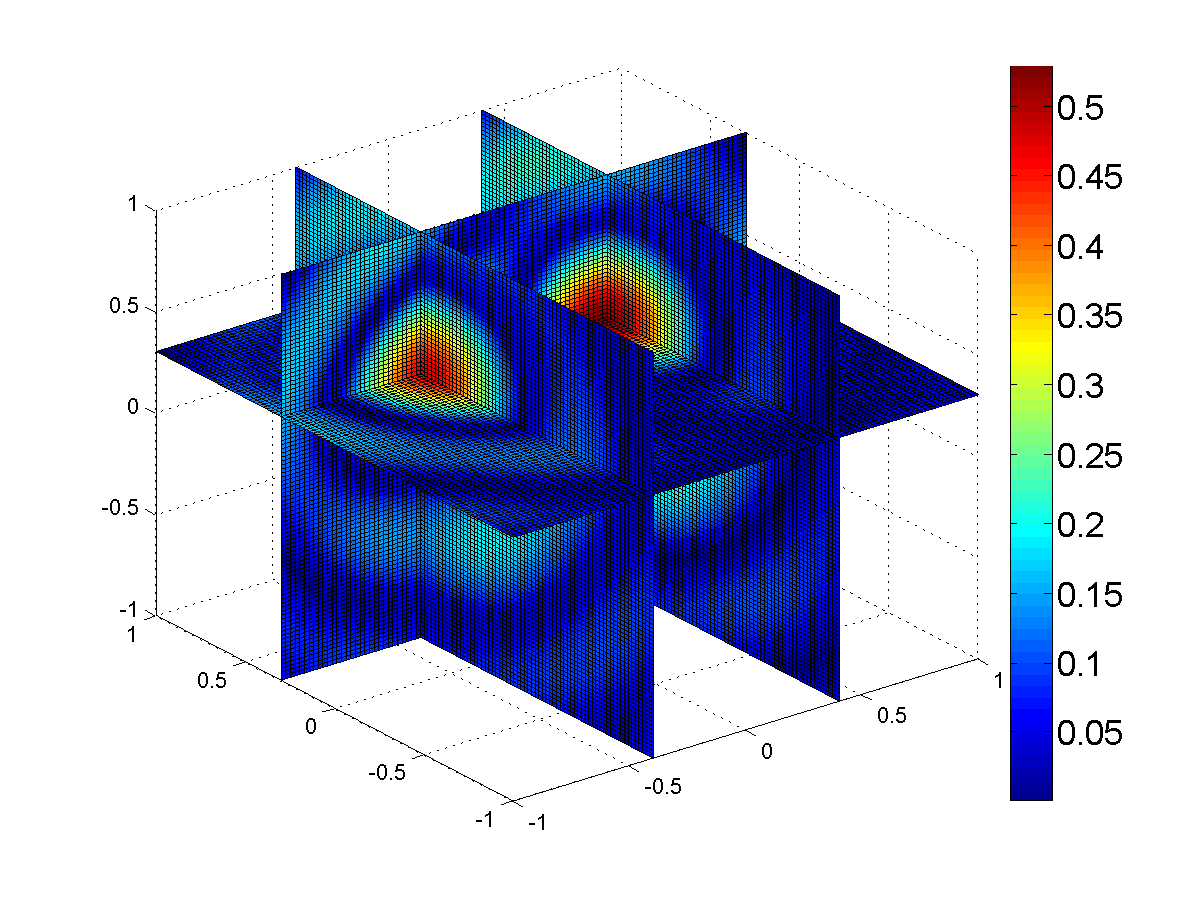}\\
    (c) index $\Psi(x_p;p_2)$ & (d) index $\Psi_c$
  \end{tabular}
  \caption{Numerical results for Example \ref{exam:3d} with $\epsilon=20\%$ data noise.}\label{fig:3dn}
\end{figure}
\section{Conclusions}
We have developed a novel direct sampling method for the inverse electromagnetic medium
scattering problem of estimating the support of inhomogeneities from scattered
electric near-field data. It was derived based on an integral representation of the scattered
field via the fundamental solution, a careful analysis on electromagnetic scattering
and the behavior of the fundamental solutions. The method is particularly attractive in that
it is applicable to a few incident fields. Methodologically, it involves only computing inner
products of the scatted electric field with fundamental solutions located at
the sampling points, hence it is strictly direct, straightforward to implement,
computationally very efficient, and very robust with respect to the presence of data noise.
The experimental results for two- and three-dimensional examples indicate that it can provide,
after thresholding with a suitable cutoff value,
a quite satisfactory estimate of the shapes of the scatterers
from the measured data corresponding to only one or two
incident directions, even in the presence of a large amount of noise.

In the present study, we have focused only on the very cheap
direct sampling method for the reconstructions of
the scatterers. It is natural to enhance the reconstructions
by other more expensive but more accurate methods. 
More specifically, one can use the reconstructions
by the direct sampling method as the initial computational domain
for those more refined methods to retrieve more accurate shapes of the scatterers
and their physical medium properties, e.g., Tikhonov regularization \cite{ItoJinZou:2012jcp}.
Since the indirect imaging methods often involve highly nonlinear optimization processes,
a more accurate and much smaller initial domain can essentially reduce the relevant computational costs.

It is also interesting to see the extensions of the direct sampling method
for other important scattering
scenarios, e.g., scattering from lines (cracks), far-field measurements
and multiple-frequency data, as well as their theoretical justifications.

\section*{Acknowledgements}
The work of BJ was supported by award KUS-C1-016-04, made by
King Abdullah University of Science and Technology (KAUST), and that
of JZ was substantially supported by Hong Kong RGC grants (projects 405110 and 404611).

\appendix

\section{Numerical method for forward scattering}\label{app:int}

In this part we describe our numerical method for the integral equation \eqref{Jeq}
for two-dimensional domains $\Omega$. We denote by $\mathbb{J}$ the index set of
grid points $x_j=(x^1_{j_1},x^2_{j_2}),\ j=(j_1,j_2)\in\mathbb{J},$ of a uniformly distributed
mesh with a mesh size $h>0$ and the square cells $B_j$ given by
\begin{equation*}
  B_j=B_{j_1,j_2}=(x^1_{j_1},x^2_{j_2})+[-\tfrac{h}{2},\tfrac{h}{2}]\times [-\tfrac{h}{2},\tfrac{h}{2}]
\end{equation*}
for every tuple $j=(j_1,j_2)$ in the index set $\mathbb{J}$. Further, we assume that the
set $\cup_{j \in \mathbb{J}} \,B_j$ contains the scatterer support $\Omega$.
We shall approximate the integral operator in \eqref{Jeq} by the mid-point quadrature rule, i.e.,
\begin{equation}\label{eqn:disint}
   J_k +\eta_{k}\, \sum_{j \in \mathbb{I}} G_{k,j} (PJ)_j h^2 = \eta_k \, E^{i}(x_k)
\end{equation}
where $J_k=J(x_k)$ and $\eta_k=\eta(x_k)$, and the off-diagonal entries $G_{k,j}$ and the
diagonal entries $G_{k,k}$ are given by $G_{k,j}=G(x_k,x_j)$ and
\begin{equation*}
G_{k,k}=\frac{1}{h^2} \int_{\left(-\frac{h}{2},\frac{h}{2}\right)^2}  G(x,0)dx,
\end{equation*}
respectively. The diagonal entries $G_{k,k}$ can be accurately computed by a
tensor-product Gaussian quadrature rule. To arrive at a fully discrete scheme,
we further approximate the crucial term $PJ$ in equation \eqref{eqn:disint} by the
central finite difference scheme:
\begin{equation*}
PJ=k^2\left(\begin{array}{cc}J& \\ & J\end{array}\right)+\left(\begin{array}{cc} D_{x_1x_1} & D_{x_1x_2} \\ D_{x_1x_2} & D_{x_2x_2}
\end{array} \right)J,
\end{equation*}
where $D_{x_ix_j}$ refers to taking the second-order derivative with respect to
$x_i$ and $x_j$. In practice, we shall approximate $D_{x_ix_j}$ by central finite
difference scheme, i.e.,
\begin{equation*}
D_{x_1x_1}= H\otimes I,\quad D_{x_2x_2}=I\otimes H,\quad D_{x_1x_2}=D\otimes D,
\end{equation*}
where $\otimes$ is the Kronecker product for matrices, $H$ and $D$ are the
tridiagonal matrices for the second derivative and the first derivative, respectively.
The resulting system can be solved using standard numerical solvers, e.g., Gaussian
elimination, if the cardinality of the index set $\mathbb{J}$ is medium, or
iterative solvers like generalized minimal residual method (GMRES) \cite{Saad:2003}
should be applied if the cardinality of $\mathbb{J}$ is large. Clearly, the extension of the
procedure to 3D problems is straightforward. Our numerical experiences indicate that
tens of GMRES iterations  already yield a very accurate solution to the linear system.

\bibliographystyle{abbrv}
\bibliography{maxwell}

\end{document}